\title[Periodic Traveling Waves in Dimer FPUT Without Symmetry]{Small-Amplitude Periodic Traveling Waves in Dimer Fermi--Pasta--Ulam--Tsingou Lattices Without Symmetry}
\author{Timothy E.\@ Faver}
\address{Department of Mathematics, Kennesaw State University, 850 Polytechnic Lane, Marietta, GA 30060 USA, {\tt{tfaver1@kennesaw.edu}}}
\author{Hermen Jan Hupkes}
\address{Mathematical Institute, Universiteit Leiden, P.O. Box 9512, 2300 RA Leiden, The Netherlands, {\tt{hhupkes@math.leidenuniv.nl}}}
\author{J.\@ Douglas Wright}
\address{Department of Mathematics, Drexel University, Korman Center, 33rd \& Market Streets, Philadelphia, PA 19104 USA {\tt{jdw66@drexel.edu}}}
\keywords{FPU, FPUT, dimer, periodic traveling wave, bifurcation, two-dimensional kernel}
\subjclass[2020]{Primary 37K40; Secondary 35C07, 37K50, 37K60}
\date{\today}
\begin{document}

%%-----------------------------------------------------------%%
%%-----------------------------------------------------------%%
%%-----------------------------------------------------------%%
%%-----------------------------------------------------------%%
%%-----------------------------------------------------------%%
\begin{abstract}
We prove the existence of small-amplitude periodic traveling waves in dimer Fermi--Pasta--Ulam--Tsingou (FPUT) lattices without assumptions of physical symmetry. 
Such lattices are infinite, one-dimensional chains of coupled particles in which the particle masses and/or the potentials of the coupling springs can alternate. 
Previously, periodic traveling waves were constructed in a variety of limiting regimes for the symmetric mass and spring dimers, in which only one kind of material data alternates. 
The new results discussed here remove the symmetry assumptions by exploiting the gradient structure and translation invariance of the traveling wave problem.  
Together, these features eliminate certain solvability conditions that symmetry would otherwise manage and facilitate a bifurcation argument involving a two-dimensional kernel.
\end{abstract}

%%-----------------------------------------------------------%%
%%-----------------------------------------------------------%%
\maketitle

%%-----------------------------------------------------------%%
%%-----------------------------------------------------------%%
%%-----------------------------------------------------------%%
%%-----------------------------------------------------------%%
%%-----------------------------------------------------------%%
\section{Introduction}

%%-----------------------------------------------------------%%
%%-----------------------------------------------------------%%
%%-----------------------------------------------------------%%
%%-----------------------------------------------------------%%
\subsection{The traveling wave problem}
A dimer Fermi--Pasta--Ulam--Tsingou (FPUT) lattice is a chain of infinitely many particles coupled to their nearest neighbors by springs, with motion restricted to the horizontal direction, and with at least one of the following material heterogeneities: either the particle masses alternate, or the spring potentials alternate, or both alternate.
A dimer with alternating particles and identical springs is called a mass dimer; one with alternating springs and identical masses is a spring dimer.
See Figure \ref{fig: mass and spring dimers}.
Dimers are among the simplest nontrivial generalizations of the classical monatomic FPUT lattice, in which all of the particles have the same mass and all of the springs have the same potential \cite{fput-original, dauxois, vainchtein-survey, pankov}
These lattices, and their many variants and generalizations, are prototypical models of wave dynamics in granular media \cite{cpkd, chong-kev-book}.

%%-----------------------------------------------------------%%
%%-----------------------------------------------------------%%
\begin{figure}[h]

%%-----------------------------------------------------------%%
\begin{subfigure}{\textwidth}

\centering

\begin{tikzpicture}[scale=1]

%%------------------------------------------------------------------------------------------------------------------------------------------------------------------------------------------------------------%%
\def\arith#1#2#3{
% Compute the product of the sum #3*(#1+#2).
% Tikz arithmetic does not like distributing!
#3*#1+#3*#2
};

%%------------------------------------------------------------------------------------------------------------------------------------------------------------------------------------------------------------%%
\def\squaremass#1#2#3{
% Draws a square with the following coordinates:
% #1 = upper left X-coordinate of square
% #2 = half of side length of square
% #3 = label inside square
\draw[fill=blue, opacity=.4, draw opacity=1,thick]
(#1,#2) rectangle (#1+2*#2,-#2);
\node at (#1+#2,0){$\boldsymbol{#3}$};
};

%%------------------------------------------------------------------------------------------------------------------------------------------------------------------------------------------------------------%%
\def\circlemass#1#2#3{
% Draws a circle with the following parameters:
% #1 = X-coordinate of center (Y-coordinate = 0)
% #2 = radius
% #3 = label inside circle
\draw[fill=yellow,thick] (#1,0)node{$\boldsymbol{#3}$} circle(#2);
};

%%------------------------------------------------------------------------------------------------------------------------------------------------------------------------------------------------------------%%
\def\spring#1#2#3#4{
% Draws a pointy spring with three ``upper points'' and two ``lower points'' and ``connectors'' on the left and right.
% #1 = X-coordinate of left endpoint of spring
% #2 = connector length
% #3 = ``wavelength'' of spring = distance between a ``pointy peak'' and the next X-axis intersection
% #4 = height of spring
\draw[line width = 1.5pt] (#1,0)
--(#1 + #2,0)
--(#1 + #2 + #3, #4)
--(#1 + #2 + 3*#3,-#4)
--(#1 + #2 + 5*#3,#4)
--(#1 + #2 + 7*#3,-#4)
--(#1 + #2 + 9*#3,#4)
--(#1 + #2 + 10*#3,0)
--(#1 + 2*#2 + 10*#3,0);
};

%%------------------------------------------------------------------------------------------------------------------------------------------------------------------------------------------------------------%%
\def\d{.15}; % horizontal spring connector length
\def\mh{1}; % side length of square mass
\def\sh{.25}; % height of spring 
\def\swl{.15}; % wave length of spring
\def\r{.35}; %radius of circular mass
\def \sl#1{\arith{2*\d}{10*\swl}{#1}}; % spring length, multiplied by the factor #1

%%------------------------------------------------------------------------------------------------------------------------------------------------------------------------------------------------------------%%
\def\labeldown#1#2{
% Draws a densely dotted vertical line with X-coordinate at #1 and Y-coordinate fixed.
% Puts the label #2 at the bottom of the line
\draw[densely dotted,thick] (#1,-\mh/2-\d)--(#1,-\mh/2-\d-.75)node[below]{#2}
};

%%------------------------------------------------------------------------------------------------------------------------------------------------------------------------------------------------------------%%
\def\brace#1#2{
% Draws a curly brace with left endpoint X-coordinate equal to #1 and all other coordinates fixed.
% Puts the label #2 at the middle.
\draw[decoration={brace, amplitude = 10pt,mirror},decorate,thick] 
(#1,-\mh/2-\d)--node[midway,below=7pt]{#2}(#1+\sl{1}+\mh/2+\r,-\mh/2-\d)
}

% LEFT HALF CIRCULAR MASS

\fill[yellow] (-\r,\r) arc(90:-90:\r)--cycle;
\draw[thick] (-\r,\r) arc(90:-90:\r);

% FIRST SPRING
\spring{0}{\d}{\swl}{\sh};

% FIRST SQUARE MASS

\squaremass{\sl{1}}{\mh/2}{m_1};

% SECOND SPRING

\spring{\sl{1}+\mh}{\d}{\swl}{\sh};

% FIRST CIRCULAR MASS

\circlemass{\sl{2}+\mh+\r}{\r}{m_2};

% THIRD SPRING

\spring{\sl{2}+\mh+2*\r}{\d}{\swl}{\sh};

% SECOND SQUARE MASS

\squaremass{\sl{3}+\mh+2*\r}{\mh/2}{m_1};

% FOURTH SPRING

\spring{\sl{3}+2*\mh+2*\r}{\d}{\swl}{\sh};

% SECOND CIRCULAR MASS

\circlemass{\sl{4}+2*\mh+3*\r}{\r}{m_2};

% FIFTH SPRING

\spring{\sl{4} + 2*\mh+4*\r}{\d}{\swl}{\sh};

% RIGHT HALF MASS

\fill[blue,opacity=.4]
(\sl{5}+2*\mh+4*\r,\mh/2) rectangle (\sl{5}+2*\mh+4*\r+\mh/2,-\mh/2);

\draw[thick] (\sl{5}+2*\mh++4*\r+\mh/2,\mh/2)
--(\sl{5}+2*\mh+4*\r,\mh/2)
--(\sl{5}+2*\mh+4*\r,-\mh/2)
--(\sl{5}+2*\mh+4*\r+\mh/2,-\mh/2);

% LABELS

%\labeldown{\sl{1}+\mh/2}{$u_{j-1}$};
%\labeldown{\sl{2}+\mh+\r}{$u_j$};
%\labeldown{\sl{3}+3*\mh/2+2*\r}{$u_{j+1}$};
%\labeldown{\sl{4}+2*\mh + 3*\r}{$u_{j+2}$};
%
%\brace{\sl{1}+\mh/2}{$r_{j-1}$};
%\brace{\sl{2}+\mh+\r}{$r_j$};
%\brace{\sl{3}+3*\mh/2+2*\r}{$r_{j+1}$};

\end{tikzpicture}
\caption{A mass dimer with alternating masses $m_1$ and $m_2$ and identical springs}
\label{fig: mass dimer}

\end{subfigure}\\[10pt]

%%-----------------------------------------------------------%%
\begin{subfigure}{\textwidth}

\centering

\begin{tikzpicture}

%%------------------------------------------------------------------------------------------------------------------------------------------------------------------------------------------------------------%%
\def\arith#1#2#3{
% Compute the product of the sum #3*(#1+#2).
% Tikz arithmetic does not like distributing!
#3*#1+#3*#2
};

%%------------------------------------------------------------------------------------------------------------------------------------------------------------------------------------------------------------%%
\def\squaremass#1#2#3{
% Draws a square with the following coordinates:
% #1 = upper left X-coordinate of square
% #2 = half of side length of square
% #3 = label inside square
\draw[fill=blue, opacity=.4, draw opacity=1,thick]
(#1,#2) rectangle (#1+2*#2,-#2);
\node at (#1+#2,0){$\boldsymbol{#3}$};
};

%%------------------------------------------------------------------------------------------------------------------------------------------------------------------------------------------------------------%%
\def\circlemass#1#2#3{
% Draws a circle with the following parameters:
% #1 = X-coordinate of center (Y-coordinate = 0)
% #2 = radius
% #3 = label inside circle
\draw[fill=yellow,thick] (#1,0)node{$\boldsymbol{#3}$} circle(#2);
};

%%------------------------------------------------------------------------------------------------------------------------------------------------------------------------------------------------------------%%
\def\spring#1#2#3#4{
% Draws a pointy spring with three ``upper points'' and two ``lower points'' and ``connectors'' on the left and right.
% #1 = X-coordinate of left endpoint of spring
% #2 = connector length
% #3 = ``wavelength'' of spring = distance between a ``pointy peak'' and the next X-axis intersection
% #4 = height of spring
\draw[line width = 1.5pt] (#1,0)
--(#1 + #2,0)
--(#1 + #2 + #3, #4)
--(#1 + #2 + 3*#3,-#4)
--(#1 + #2 + 5*#3,#4)
--(#1 + #2 + 7*#3,-#4)
--(#1 + #2 + 9*#3,#4)
--(#1 + #2 + 10*#3,0)
--(#1 + 2*#2 + 10*#3,0);
};

%%------------------------------------------------------------------------------------------------------------------------------------------------------------------------------------------------------------%%
\def\d{.15}; % horizontal spring connector length
\def\mh{1}; % side length of square mass
\def\sh{.25}; % height of point spring 
\def\swl{.15}; % wave length of point spring
\def\r{.35}; %radius of circular mass
\def \sl#1{\arith{2*\d}{10*\swl}{#1}}; % spring length, multiplied by the factor #1

% The following constants were experimentally derived.

\def\h{.45}; % height of the curly spring
\def\M{.88}; % parameter for the curly spring
\def\N{.3009}; % parameter for the curly spring
\def\E{.2}; % parameter for the curly spring

% curly spring plotting command

\def\coil#1{ 
{\N+\M*\t+\E*sin(4*\t*pi r)+#1},
{\h*cos(4*\t*pi r)}
}

% curly spring full command

\def\NLSPRING#1{
\draw[line width=1.5pt,domain={-.125:1.125},smooth,variable=\t,samples=100]
% #1 = X-coordinate of left endpoint
(#1,0)--plot(\coil{#1+\d})
--(#1+\sl{1},0);
}

%%------------------------------------------------------------------------------------------------------------------------------------------------------------------------------------------------------------%%
\def\labeldown#1#2{
% Draws a densely dotted vertical line with X-coordinate at #1 and Y-coordinate fixed.
% Puts the label #2 at the bottom of the line
\draw[densely dotted,thick] (#1,-\mh/2-\d)--(#1,-\mh/2-\d-.75)node[below]{#2}
};

%%------------------------------------------------------------------------------------------------------------------------------------------------------------------------------------------------------------%%
\def\brace#1#2{
% Draws a curly brace with left endpoint X-coordinate equal to #1 and all other coordinates fixed.
% Puts the label #2 at the middle.
\draw[decoration={brace, amplitude = 10pt,mirror},decorate,thick] 
(#1,-\mh/2-\d)--node[midway,below=7pt]{#2}(#1+\sl{1}+\mh,-\mh/2-\d)
}

% LEFT HALF MASS

\fill[blue,opacity=.4] 
(-\mh/2,\mh/2)
--(0,\mh/2)
--(0,-\mh/2)
--(-\mh/2,-\mh/2)
--cycle;

\draw[thick]
(-\mh/2,\mh/2)
--(0,\mh/2)
--(0,-\mh/2)
--(-\mh/2,-\mh/2);

% FIRST CURLY SPRING

\NLSPRING{0};

% FIRST FULL MASS

\squaremass{\sl{1}}{\mh/2}{m};

% FIRST POINTY SPRING

\spring{\sl{1}+\mh}{\d}{\swl}{\sh};

% SECOND FULL MASS

\squaremass{\sl{2}+\mh}{\mh/2}{m};

% SECOND CURLY SPRING

\NLSPRING{\sl{2}+2*\mh}:

% THIRD FULL MASS

\squaremass{\sl{3}+2*\mh}{\mh/2}{m};

% SECOND CURLY SPRING

\spring{\sl{3}+3*\mh}{\d}{\swl}{\sh};

% FOURTH FULL MASS

\squaremass{\sl{4}+3*\mh}{\mh/2}{m};

% THIRD CURLY SPRING

\NLSPRING{\sl{4}+4*\mh};

% RIGHT HALF MASS

\fill[blue,opacity=.4]
(\sl{5}+4*\mh,\mh/2) rectangle (\sl{5}+4*\mh+\mh/2,-\mh/2);

\draw[thick] (\sl{5}+4*\mh+\mh/2,\mh/2)
--(\sl{5}+4*\mh,\mh/2)
--(\sl{5}+4*\mh,-\mh/2)
--(\sl{5}+4*\mh+\mh/2,-\mh/2);

% LABELS

%\labeldown{\sl{1}+\mh/2}{$u_{j-1}$};
%\labeldown{\sl{2}+3*\mh/2}{$u_j$};
%\labeldown{\sl{3}+5*\mh/2}{$u_{j+1}$};
%\labeldown{\sl{4}+7*\mh/2}{$u_{j+2}$};
%
%\brace{\sl{1}+\mh/2}{$r_{j-1}$};
%\brace{\sl{2}+3*\mh/2}{$r_j$};
%\brace{\sl{3}+5*\mh/2}{$r_{j+1}$};

\end{tikzpicture}
\caption{A spring dimer with alternating springs and identical masses $m$}
\label{fig: spring dimer}

\end{subfigure}

\caption{The symmetric mass and spring dimers}
\label{fig: mass and spring dimers}

\end{figure}

Figures \ref{fig: mass dimer} and \ref{fig: spring dimer} suggest that the mass and spring dimers possess certain physical ``symmetries'' that a ``general'' dimer, in which both masses and springs alternate, does not.
We sketch such a general dimer, along with some notation for future use, in Figure \ref{fig: general dimer}.
Physically, the mass dimer is the same when ``reflected'' about a particle, as is the spring dimer when reflected about a spring.
Such symmetries manifest themselves mathematically in a variety of useful ways, as we elaborate in Section \ref{sec: with symmetry}, and these manifestations have been key to multiple prior analyses of mass and spring dimer dynamics.
Here we consider the general dimer, and one of the main novelties of our techniques is that we do not rely at all on symmetry.

%%-----------------------------------------------------------%%
%%-----------------------------------------------------------%%
\begin{figure}

\begin{tikzpicture}

%%------------------------------------------------------------------------------------------------------------------------------------------------------------------------------------------------------------%%
\def\arith#1#2#3{
% Compute the product of the sum #3*(#1+#2).
% Tikz arithmetic does not like distributing!
#3*#1+#3*#2
};

%%------------------------------------------------------------------------------------------------------------------------------------------------------------------------------------------------------------%%
\def\squaremass#1#2#3{
% Draws a square with the following coordinates:
% #1 = upper left X-coordinate of square
% #2 = half of side length of square
% #3 = label inside square
\draw[fill=blue, opacity=.4, draw opacity=1,thick]
(#1,#2) rectangle (#1+2*#2,-#2);
\node at (#1+#2,0){$\boldsymbol{#3}$};
};

%%------------------------------------------------------------------------------------------------------------------------------------------------------------------------------------------------------------%%
\def\circlemass#1#2#3{
% Draws a circle with the following parameters:
% #1 = X-coordinate of center (Y-coordinate = 0)
% #2 = radius
% #3 = label inside circle
\draw[fill=yellow,thick] (#1,0)node{$\boldsymbol{#3}$} circle(#2);
};

%%------------------------------------------------------------------------------------------------------------------------------------------------------------------------------------------------------------%%
\def\spring#1#2#3#4{
% Draws a pointy spring with three ``upper points'' and two ``lower points'' and ``connectors'' on the left and right.
% #1 = X-coordinate of left endpoint of spring
% #2 = connector length
% #3 = ``wavelength'' of spring = distance between a ``pointy peak'' and the next X-axis intersection
% #4 = height of spring
\draw[line width = 1.5pt] (#1,0)
--(#1 + #2,0)
--(#1 + #2 + #3, #4)
--(#1 + #2 + 3*#3,-#4)
--(#1 + #2 + 5*#3,#4)
--(#1 + #2 + 7*#3,-#4)
--(#1 + #2 + 9*#3,#4)
--(#1 + #2 + 10*#3,0)
--(#1 + 2*#2 + 10*#3,0);
};

%%------------------------------------------------------------------------------------------------------------------------------------------------------------------------------------------------------------%%
\def\d{.15}; % horizontal spring connector length
\def\mh{1}; % side length of square mass
\def\sh{.25}; % height of point spring 
\def\swl{.15}; % wave length of point spring
\def\r{.35}; %radius of circular mass
\def \sl#1{\arith{2*\d}{10*\swl}{#1}}; % spring length, multiplied by the factor #1

% The following constants were experimentally derived.

\def\h{.45}; % height of the curly spring
\def\M{.88}; % parameter for the curly spring
\def\N{.3009}; % parameter for the curly spring
\def\E{.2}; % parameter for the curly spring

% curly spring plotting command

\def\coil#1{ 
{\N+\M*\t+\E*sin(4*\t*pi r)+#1},
{\h*cos(4*\t*pi r)}
}

% curly spring full command

\def\NLSPRING#1{
\draw[line width=1.5pt,domain={-.125:1.125},smooth,variable=\t,samples=100]
% #1 = X-coordinate of left endpoint
(#1,0)--plot(\coil{#1+\d})
--(#1+\sl{1},0);
}

%%------------------------------------------------------------------------------------------------------------------------------------------------------------------------------------------------------------%%
\def\labeldown#1#2{
% Draws a densely dotted vertical line with X-coordinate at #1 and Y-coordinate fixed.
% Puts the label #2 at the bottom of the line
\draw[densely dotted,thick] (#1,-\mh/2-\d)--(#1,-\mh/2-\d-.75)node[below]{#2}
};

%%------------------------------------------------------------------------------------------------------------------------------------------------------------------------------------------------------------%%
\def\brace#1#2{
% Draws a curly brace with left endpoint X-coordinate equal to #1 and all other coordinates fixed.
% Puts the label #2 at the middle.
\draw[decoration={brace, amplitude = 10pt,mirror},decorate,thick] 
(#1,-\mh/2-\d)--node[midway,below=7pt]{#2}(#1+\sl{1}+\mh/2+\r,-\mh/2-\d)
}

% LEFT HALF CIRCULAR MASS

\fill[yellow] (-\r,\r) arc(90:-90:\r)--cycle;
\draw[thick] (-\r,\r) arc(90:-90:\r);

% FIRST CURLY SPRING

\NLSPRING{0};

% FIRST SQUARE MASS

\squaremass{\sl{1}}{\mh/2}{m_1};

% FIRST POINTY SPRING

\spring{\sl{1}+\mh}{\d}{\swl}{\sh};

% FIRST CIRCULAR MASS

\circlemass{\sl{2}+\mh+\r}{\r}{m_2};

% SECOND CURLY SPRING

\NLSPRING{\sl{2}+\mh+2*\r}:

% SECOND SQUARE MASS

\squaremass{\sl{3}+\mh+2*\r}{\mh/2}{m_1};

% SECOND POINTY

\spring{\sl{3}+2*\mh+2*\r}{\d}{\swl}{\sh};

% FOURTH FULL MASS

\circlemass{\sl{4}+2*\mh+3*\r}{\r}{m_2};

% THIRD CURLY SPRING

\NLSPRING{\sl{4}+2*\mh+4*\r};

% RIGHT HALF MASS

\fill[blue,opacity=.4]
(\sl{5}+2*\mh+4*\r,\mh/2) rectangle (\sl{5}+2*\mh+4*\r+\mh/2,-\mh/2);

\draw[thick] (\sl{5}+2*\mh+4*\r+\mh/2,\mh/2)
--(\sl{5}+2*\mh+4*\r,\mh/2)
--(\sl{5}+2*\mh+4*\r,-\mh/2)
--(\sl{5}+2*\mh+4*\r+\mh/2,-\mh/2);

% LABELS

\labeldown{\sl{1}+\mh/2}{$u_{j-1}$};
\labeldown{\sl{2}+\mh+\r}{$u_j$};
\labeldown{\sl{3}+3*\mh/2+2*\r}{$u_{j+1}$};
\labeldown{\sl{4}+2*\mh + 3*\r}{$u_{j+2}$};

\brace{\sl{1}+\mh/2}{$r_{j-1}$};
\brace{\sl{2}+\mh+\r}{$r_j$};
\brace{\sl{3}+3*\mh/2+2*\r}{$r_{j+1}$};

\end{tikzpicture}
\caption{A general dimer with alternating masses and springs}
\label{fig: general dimer}

\end{figure}

Specifically, we construct nontrivial periodic traveling waves for general dimers with wave speed close to a certain critical threshold called the lattice's ``speed of sound''---that is, periodic traveling waves in the long wave limit.
We state our precise results below in Theorem \ref{thm: main} and discuss the connection of these traveling waves to the long wave problem in dimers, and related problems, in Section \ref{sec: context}.

Here is our problem.
Index the particles by integers $j \in \Z$ and let $u_j$ be the displacement of the $j$th particle from its equilibrium position, let $m_j$ be the mass of the $j$th particle, and let $\V_j$ be the potential of the spring connecting the $j$th and $(j+1)$st particles.
To ensure a dimer structure, we assume
\[
m_{j+2} = m_j
\quadword{and}
\V_{j+2} = \V_j
\]
for all $j$.
More precisely, after nondimensionalization \cite[Sec.\@ 1.3, App.\@ F.5]{faver-dissertation}, we take
\begin{equation}\label{eqn: mat data dimer}
m_j
= \begin{cases}
1, \ j \text{ is odd} \\
m, \ j \text{ is even}
\end{cases}
\quadword{and}
\V_j'(r)
= \begin{cases}
r + r^2 + \O(r^3) \\
\kappa{r} + \beta{r}^2 + \O(r^3).
\end{cases}
\end{equation}
The minimum regularity required for our proofs is that $\V_j \in \Cal^7(\R)$ for precise technical reasons detailed in Appendix \ref{app: composition operator calculus}, but for broader applications to FPUT traveling wave problems, we may as well assume $\V_j \in \Cal^{\infty}(\R)$.
Our methods require that the heterogeneity appear at the linear level, so we will always assume
\begin{equation}\label{eqn: mat data hetero lin}
\frac{1}{m} > 1
\quadword{or}
\kappa > 1.
\end{equation}

We emphasize that $m$ and $\kappa$, and indeed all of the material data, are fixed throughout our analysis and that virtually all operators, quantities, and thresholds depend on at least these quantities; we do not indicate such dependence in our notation.
Additionally, beyond the regularity requirements on $\V_j$, the nonlinear terms, even the quadratic ones, play no important role.

Newton's second law requires that the displacements $u_j$ satisfy
\begin{equation}\label{eqn: eqn of motion}
m_j\ddot{u}_j
= \V_j'(u_{j+1}-u_j)-\V_{j-1}'(u_j-u_{j-1}).
\end{equation}
Under the traveling wave ansatz
\begin{equation}\label{eqn: position tw ansatz}
u_j(t)
= \begin{cases}
p_1(j-ct), \ j \text{ is odd} \\
p_2(j-ct), \ j \text{ is even},
\end{cases}
\qquad
\pb := \begin{pmatrix*}
p_1 \\ 
p_2
\end{pmatrix*},
\end{equation}
these equations of motion become the advance-delay problem
\begin{equation}\label{eqn: position tw system}
\begin{cases}
c^2p_1'' = \V_1'(S^1p_2-p_1) - \V_2'(p_1-S^{-1}p_2) \\
c^2mp_2'' = \V_2'(S^1p_1-p_2)-\V_1'(p_2-S^{-1}p_1).
\end{cases}
\end{equation}
Here, for $\theta \in \R$, $S^{\theta}$ is the shift operator
\[
(S^{\theta}p)(X)
:= p(X+\theta).
\]

The following is our main result for \eqref{eqn: position tw system}.
We use the notation for periodic Sobolev spaces developed in Appendix \ref{app: per sob space}.

%%-----------------------------------------------------------%%
%%-----------------------------------------------------------%%
\begin{theorem}\label{thm: main}
Suppose that the lattice's material data $m_j$ and $\V_j$ satisfy the dimer condition \eqref{eqn: mat data dimer} and the linear heterogeneity condition \eqref{eqn: mat data hetero lin}.
Let the wave speed $c$ in the ansatz \eqref{eqn: position tw ansatz} satisfy $|c| > c_{\star}$, where the lattice's ``speed of sound'' $c_{\star}$ is defined in \eqref{eqn: speed of sound}.
Then there exists $a_c > 0$ such that for $|a| \le a_c$, there is a traveling wave solution $\pb_c^a$ to \eqref{eqn: position tw system} of the form
\[
\pb_c^a(X)
= a\nub_1^c(\omega_c^aX)+a^2\psib_c^a(\omega_c^aX).
\]
The smooth, $2\pi$-periodic profile terms $\nub_c^a$ and $\psib_c^a$ and the frequency $\omega_c^a \in \R$ have the following properties.

%%-----------------------------------------------------------%%
\begin{enumerate}[label={\bf(\roman*)}]

%%-----------------------------------------------------------%%
\item
The leading order term $\nub_1^c$ has an exact formula given below by \eqref{eqn: nub1}.

%%-----------------------------------------------------------%%
\item
The remainder term $\psib_c^a$ is orthogonal to $\nub_1^c$ and uniformly bounded in $a$ in the sense that
\[
\ip{\nub_1^c}{\psib_c^a}_{L_{\per}^2} = 0
\quadword{and}
\sup_{|a| \le a_c} \norm{\psib_c^a}_{H_{\per}^r} < \infty, \ r \ge 0.
\]

%%-----------------------------------------------------------%%
\item
The frequency $\omega_c^a$ has the expansion 
\[
\omega_c^a
= \omega_c + a\xi_c^a,
\]
where $\omega_c > 0$ is the lattice's ``critical frequency,'' as developed in Theorem \ref{thm: eigenvalues}, and 
\[
\sup_{|a| \le a_c} |\xi_c^a| 
< \infty.
\]
\end{enumerate}
\end{theorem}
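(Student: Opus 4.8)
The plan is to rescale \eqref{eqn: position tw system} to a fixed period, exploit its variational and translation-invariant structure, and run a Lyapunov--Schmidt reduction across the two-dimensional kernel of the linearization at the zero state.

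\emph{Rescaling and structure.} Writing $\pb(X) = q(\omega X)$ with $q = (q_1,q_2)$ turns \eqref{eqn: position tw system} into an advance-delay system $\Phi(q,\omega) = 0$ for $2\pi$-periodic $q$, in which the inertial terms become $(c^2\omega^2 q_1'',\, c^2 m\omega^2 q_2'')$ and the remaining terms are the spring forces $\V_j'(S^{\pm\omega}(\cdot)-(\cdot))$; here $c$ is a fixed parameter and $\omega$ a second one. Using the composition-operator estimates of Appendix~\ref{app: composition operator calculus} and the spaces $H_{\per}^r$ of Appendix~\ref{app: per sob space}, $\Phi$ is smooth between the appropriate periodic Sobolev spaces (the hypothesis $\V_j \in \Cal^7$ supplies exactly the regularity margin needed, since $\omega \mapsto S^{\pm\omega}$ is smooth only at the cost of derivatives). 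Two structural facts are recorded. First, $\Phi(\cdot\,,\omega)$ is the negative $L_{\per}^2$-gradient of
\[
\mathcal{E}_{c,\omega}(q) = \int_0^{2\pi}\left(\frac{c^2\omega^2}{2}(q_1')^2 + \frac{c^2 m\omega^2}{2}(q_2')^2 - \V_1(S^\omega q_2 - q_1) - \V_2(S^\omega q_1 - q_2)\right)dx .
\]
Second, $\mathcal{E}_{c,\omega}$ is invariant under the phase translations $q \mapsto S^s q$ and under the rigid shifts $q \mapsto q + s(1,1)$. Differentiating these invariances in $s$ gives, for \emph{every} $q$, the identities $\ip{\Phi(q,\omega)}{q'}_{L_{\per}^2} = 0$ and $\ip{\Phi(q,\omega)}{(1,1)}_{L_{\per}^2} = 0$. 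The second lets me work throughout in the codimension-one subspace on which $\int_0^{2\pi}(q_1 + q_2)\,dx = 0$, into which $\Phi$ then maps, removing the rigid-shift degeneracy; the first is the crucial one below.

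\emph{The linear picture.} Diagonalizing the linearization of $\Phi$ at $q = 0$ in Fourier modes, Theorem~\ref{thm: eigenvalues} shows that for $|c| > c_\star$ there is a unique $\omega_c > 0$ at which the $k = \pm 1$ mode becomes resonant while every other mode stays invertible, and that the resonance is simple and transversal. Hence $\mathcal{M} := D_q\Phi(0,\omega_c)$ is self-adjoint on (the mean-normalized) $L_{\per}^2$ with two-dimensional kernel $K = \operatorname{span}\{\nub_1^c, (\nub_1^c)'\}$, carried entirely by the $\pm 1$ modes, where $\nub_1^c$ is the explicit eigenfunction \eqref{eqn: nub1}; in particular $(\nub_1^c)'' = -\nub_1^c$. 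Self-adjointness makes $\mathcal{M}$ an isomorphism of $K^\perp$ (the two derivatives gained from the $q''$-terms dominate the lower-order shift terms), and transversality of the resonance is precisely $\ip{w}{\nub_1^c}_{L_{\per}^2} \ne 0$, where $w := \partial_\omega[D_q\Phi(0,\omega)\nub_1^c]|_{\omega = \omega_c}$. Let $Q$ denote the $L_{\per}^2$-orthogonal projection onto $K^\perp$.

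\emph{Lyapunov--Schmidt across $K$.} Substitute $q = a\nub_1^c + a^2\psib$ with $\psib \in K^\perp$ and $\omega = \omega_c + a\xi$ into $\Phi(q,\omega) = 0$ and divide by $a^2$. Since $\nub_1^c$ lives in the $\pm 1$ modes, the quadratic contribution $g$ lives in the $0,\pm 2$ modes and hence lies in $K^\perp$, and a careful expansion (drawing again on the composition calculus and two $\omega$-Taylor expansions, which is where the $\Cal^7$ regularity is consumed) produces a smooth map $\mathcal{F}(\psib,\xi,a)$ with $\mathcal{F}(\psib,\xi,0) = \mathcal{M}\psib + \xi w + g$. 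Split $\Phi = 0$ into the $K^\perp$-part $Q\Phi = 0$ and the two scalar equations $\ip{\Phi}{\nub_1^c}_{L_{\per}^2} = 0$ and $\ip{\Phi}{(\nub_1^c)'}_{L_{\per}^2} = 0$. The last one is automatic: if $Q\Phi = 0$ then $\Phi \in K$, while $\psib \in K^\perp$ forces $\psib' \in K^\perp$ (using $(\nub_1^c)'' = -\nub_1^c$), so $\ip{\Phi}{\psib'}_{L_{\per}^2} = 0$, and then the identity $\ip{\Phi}{q'}_{L_{\per}^2} = 0$ with $q' = a(\nub_1^c)' + a^2\psib'$ collapses to $\ip{\Phi}{(\nub_1^c)'}_{L_{\per}^2} = 0$. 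This is exactly where the gradient structure and translation invariance together discharge the extra solvability condition that a reflection symmetry would otherwise manage. It remains to solve $Q\mathcal{F} = 0$ and $\ip{\mathcal{F}}{\nub_1^c}_{L_{\per}^2} = 0$: the first gives $\psib = \psib(\xi,a)$ by the implicit function theorem (invertibility of $\mathcal{M}|_{K^\perp}$), with $\psib(\xi,0) = -\mathcal{M}^{-1}(\xi Q w + Q g)$; substituting, the reduced scalar equation reads $\xi\ip{w}{\nub_1^c}_{L_{\per}^2} + O(a) = 0$ (the term $\ip{g}{\nub_1^c}_{L_{\per}^2}$ vanishes by disjoint Fourier support), so since $\ip{w}{\nub_1^c}_{L_{\per}^2} \ne 0$ the implicit function theorem again yields $\xi = \xi_c^a$ with $\xi_c^0 = 0$, hence bounded. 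Setting $\psib_c^a := \psib(\xi_c^a,a) \in K^\perp$, $\omega_c^a := \omega_c + a\xi_c^a$, and undoing the rescaling produces the asserted wave: $\psib_c^a$ is by construction $L_{\per}^2$-orthogonal to $\nub_1^c$, a bootstrap in the equation upgrades it to $\Cal^\infty$ and gives the uniform-in-$a$ bounds in every $H_{\per}^r$, and nontriviality follows from $a \ne 0$ and $\nub_1^c \ne 0$.

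\emph{Main obstacle.} The heart of the matter is the two-dimensional kernel, and concretely the proof that the $(\nub_1^c)'$-solvability condition is free: this forces one to establish the variational formulation of the \emph{periodic} traveling-wave operator and to verify the translation-invariance identity $\ip{\Phi(q,\omega)}{q'}_{L_{\per}^2} = 0$ rigorously in the periodic Sobolev scale, which is delicate because both the nonlinearity and the $\omega$-dependence of the shift operators lose derivatives --- the reason for Appendices~\ref{app: composition operator calculus} and~\ref{app: per sob space}. The linear input of Theorem~\ref{thm: eigenvalues} (existence, uniqueness, two-dimensionality, and transversality of the critical frequency) is the other technical prerequisite.
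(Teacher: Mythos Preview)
Your proposal is correct and follows essentially the same route as the paper's first proof (Section~\ref{sec: gradient formulation}): recognize $\Phib_c$ as an $L_{\per}^2$-gradient, use rigid-shift invariance to quotient out $\nub_0$, use phase-shift invariance to obtain the derivative orthogonality $\ip{\Phib_c(\phib,\omega)}{\phib'}=0$, and then run Lyapunov--Schmidt across the two-dimensional kernel, with the orthogonality identity killing the $(\nub_1^c)'$-solvability condition and transversality (Corollary~\ref{cor: transversality}) handling the remaining one. Your observation that $\psib\in K^{\perp}\Rightarrow\psib'\in K^{\perp}$ is exactly Lemma~\ref{lem: Pi partial commute}, and your collapse of $\ip{\Phib}{q'}=0$ to $\ip{\Phib}{(\nub_1^c)'}=0$ is the ``miracle calculation'' \eqref{eqn: the miracle calculation}. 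The only organizational difference is that you build the scaling $q=a\nub_1^c+a^2\psib$, $\omega=\omega_c+a\xi$ into the ansatz from the start, whereas the paper first solves for $\Psib_c(\nub,\omega)$ and specializes afterward; these are equivalent. One small caution: you call $\Phib$ ``smooth,'' but as the paper emphasizes (Lemma~\ref{lem: Phib regularity}), $\omega\mapsto S^{\pm\omega}$ is only once continuously differentiable from $H_{\per}^{r+2}$ to $H_{\per}^r$, so you only get $\Phib_c,\,D_{\phib}\Phib_c\in\Cal^1$---which is, however, exactly enough.
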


We provide effectively {\it{four}} proofs of this theorem.
Specifically, Sections \ref{sec: gradient formulation} and \ref{sec: LC} give proofs inspired by the techniques of Wright and Scheel \cite{wright-scheel} for constructing asymmetric solitary wave solutions to a system of coupled KdV equations; the lack of symmetry in their problem manifests itself mathematically in a complication very close to ours, as we discuss below in Remark \ref{rem: WS}.
Section \ref{sec: with symmetry} provides fresh perspectives on the role of symmetry when it {\it{is}} present in the case of mass and spring dimers.
And Section \ref{sec: quant} develops precise quantitative estimates for the solution components from Theorem \ref{thm: main} relative to the wave speed $c$.
In particular, the exact, but general, hypotheses of Theorem \ref{thm: quant} subsume all prior constructions of dimer periodics into one quantitative result.
For brevity, Theorem \ref{thm: main} above does not contain our results in the long wave scaling, and we discuss them instead in Section \ref{sec: app of abstract quant}.

%%-----------------------------------------------------------%%
%%-----------------------------------------------------------%%
\begin{remark}
The majority of traveling wave results (periodic or not) for lattices are stated in relative displacement coordinates: $r_j = u_{j+1}-u_j$.
See Figure \ref{fig: general dimer}.
We find it more convenient to work in the original equilibrium displacement coordinates $u_j$, from which relative displacement results can easily be obtained (though the converse is not necessarily true).
\end{remark}

We finally state the actual periodic traveling wave problem that we solve to prove Theorem \ref{thm: main}; the following notation was not strictly necessary above, but all of our subsequent work depends on it.
Since we are interested in periodic traveling waves, we adjust the original traveling wave ansatz \eqref{eqn: position tw ansatz} by decoupling the profile and frequency via the additional ansatz
\[
\pb(X) = \phib(\omega{X}),
\qquad
\phib := \begin{pmatrix*}
\phi_1 \\
\phi_2
\end{pmatrix*}.
\]
The new profiles $\phi_1$ and $\phi_2$ are now $2\pi$-periodic and $\omega \in \R$.
The traveling wave equations \eqref{eqn: position tw system} then become
\begin{equation}\label{eqn: periodic tw system}
\begin{cases}
c^2\omega^2\phi_1'' = \V_1'(S^{\omega}\phi_2-\phi_1) - \V_2'(\phi_1-S^{-\omega}\phi_2) \\
mc^2\omega^2\phi_2'' = \V_2'(S^{\omega}\phi_1-\phi_2) - \V_1'(\phi_2-S^{-\omega}\phi_1).
\end{cases}
\end{equation}
We compress \eqref{eqn: periodic tw system} in the form
\begin{equation}\label{eqn: periodic tw system Phib}
\Phib_c(\phib,\omega)
= 0,
\end{equation}
where
\begin{equation}\label{eqn: Phib}
\Phib_c(\phib,\omega)
:= c^2\omega^2M\phib'' 
+ \begin{pmatrix*}
\V_2'(\phi_1-S^{-\omega}\phi_2) - \V_1'(S^{\omega}\phi_2-\phi_1) \\
\V_1'(\phi_2-S^{-\omega}\phi_1) - \V_2'(S^{\omega}\phi_1-\phi_2)
\end{pmatrix*}
\end{equation}
and
\begin{equation}\label{eqn: M}
M 
:= \begin{bmatrix*}
1 &0 \\
0 &m
\end{bmatrix*}.
\end{equation}

The primary challenge that we confront is that the linearization $D_{\phib}\Phib_c(0,\omega_c)$ has a three-dimensional kernel and cokernel.
Translation invariance allows us to reduce both dimensions to two, but, in the absence of symmetry, we cannot get below that.
We now discuss the broader relevance of this periodic traveling wave problem and, in the process, why this dimension counting is so important.

%%-----------------------------------------------------------%%
%%-----------------------------------------------------------%%
%%-----------------------------------------------------------%%
%%-----------------------------------------------------------%%
\subsection{Motivation, context, and connections other FPUT traveling wave problems}\label{sec: context}
Our primary motivation in constructing these particular periodics is the long wave problem for dimers.
This limit looks for traveling waves whose profiles are close to a suitably scaled $\sech^2$-type solution of a KdV equation that acts as the ``continuum limit'' for the lattice.
(More precisely, one posits $\pb(X) = \ep^2\mathbf{h}(\ep{X})$ and $c^2 = c_{\star}^2 + \ep^2$, with $c_{\star}$ given by \eqref{eqn: speed of sound}. 
We discuss this further in Section \ref{sec: app of abstract quant}.)
The motivation for this ansatz is that in a ``polyatomic'' FPUT lattice, for which the material data repeats with some arbitrary period, long wave-scaled solutions to certain KdV equations (whose coefficients depend on the lattice's material data) are very good approximations to solutions to the equations of motion over very long time scales \cite{schneider-wayne, gmwz, chirilus-bruckner-etal}.

Faver and Wright constructed long wave solutions for the mass dimer \cite{faver-wright} and Faver treated the spring dimer \cite{faver-spring-dimer}.
Faver and Hupkes produced a different development of mass and spring dimer nanopterons via spatial dynamics in \cite{faver-hupkes-spatial-dynamics} and obtained results for equilibrium displacement coordinates as we do; Deng and Sun \cite{deng-sun-fput-md} performed a related spatial dynamics analysis to yield similar results.
These dimer traveling waves were not solitary waves, as Friesecke and Pego found for the monatomic lattice \cite{friesecke-pego1}, but rather nanopterons \cite{boyd}: the superposition of a leading-order localized (here, $\sech^2$-type) term, a higher-order localized remainder, and a high frequency periodic ``ripple'' of amplitude small beyond all algebraic orders of the long wave parameter.
Both constructions relied on lattice symmetries in two critical steps to adapt functional analytic techniques from Beale's work on capillary gravity water waves \cite{beale} and its later deployment by Amick and Toland \cite{amick-toland} for a model fourth-order KdV equation.

First, as mentioned above, the periodics in \cite{faver-wright, faver-spring-dimer} were constructed with a modified ``bifurcation from a simple eigenvalue'' argument in the style of Crandall and Rabinowitz and Zeidler \cite{crandall-rabinowitz, zeidler}.
We adapt further this bifurcation analysis in our arguments, and our preferred reference is \cite[Thm.\@ 1.5.1]{kielhofer}.
Symmetry permitted the restriction of the traveling wave problem $\Phib_c(\phib,\omega) = 0$ from \eqref{eqn: periodic tw system Phib} and \eqref{eqn: Phib} to function spaces on which the linearization $D_{\phib}\Phib_c(0,\omega)$ at $\phib = 0$ and $\omega = \omega_c$, with $\omega_c$ as the ``critical frequency'' from Theorem \ref{thm: eigenvalues}, had a one-dimensional kernel and cokernel.
This was the key to the modified bifurcation from a simple eigenvalue argument.

Up to a useful linear change of coordinates that diagonalized the traveling wave problem and the long wave scaling, the long wave periodics in \cite{faver-wright, faver-spring-dimer} have the same structure as ours from Theorem \ref{thm: main}.
However, the main technical accomplishment of our results here is that we manage a {\it{two}}-dimensional kernel and cokernel in the absence of symmetry via other inherent properties of the lattice---namely, the special ``orthogonality condition'' that $\ip{\Phib_c(\phib,\omega)}{\phib'}_{L_{\per}^2} = 0$, proved in Corollary \ref{cor: shift} and Lemma \ref{lem: deriv ortho}.
While there certainly exist other results on bifurcations with two-dimensional kernels, their hypotheses are inappropriate for our problem.
For example, \cite{kromer-healey-kielhofer} and \cite{liu-shi-wang} assume certain ``nondegeneracy'' conditions on what for us would be the second derivative $D_{\phib\phib}^2\Phib_c(0,\omega_c)$, over which we expect to have no control (beyond its existence), while \cite{akers-ambrose-sulon} and \cite{baldi-toland} assume some (non)resonance conditions among their critical frequencies.

The second use of symmetry in the full nanopteron constructions of \cite{faver-wright, faver-spring-dimer} was somewhat subtler and involved the actual need for periodics in the first place.
The obstacle was that attempting to solve the traveling wave problem \eqref{eqn: position tw system} by a merely localized perturbation from the $\sech^2$-type continuum limit resulted in an overdetermined system with two unknowns (the two components of the localized perturbation) but four equations.
These are the expected two components from \eqref{eqn: position tw system} and a surprising ``solvability condition'': the vanishing of the Fourier transform of a certain related operator at $\pm\omega_c$.
Symmetry ensured that the vanishing at $\omega_c$ implied the vanishing at $-\omega_c$, reducing the overdetermined problem to only three equations, which were managed by adding a third variable via the periodic amplitude---which is exactly why we seek periodics in Theorem \ref{thm: main} that are parametrized in amplitude.
While the full nanopteron problem in the general dimer without symmetry remains challenging and beyond the scope of our work here to address, the periodic solutions constructed here will be a fundamental component of the nanopteron ansatz for the general dimer's long wave problem.

Periodic traveling waves for lattices have been constructed in several other ``material limit'' regimes in addition to the long wave limit.
These include the small mass limit for mass dimers by Hoffman and Wright \cite[Thm.\@ 5.1]{hoffman-wright}, the equal mass limit for mass dimers by Faver and Hupkes \cite[Prop.\@ 3.3]{faver-hupkes}, and the small mass limit for the mass-in-mass variant of monatomic FPUT by Faver \cite[Thm.\@ 2]{faver-mim-nanopteron}.
Each of these limits views the heterogeneous lattice as a small material perturbation of a monatomic FPUT lattice, and the nanopteron is a small nonlocal perturbation of a monatomic solitary wave \cite{friesecke-pego1, friesecke-wattis}.
While each limit has a nontrivially different solvability condition that makes the traveling wave problem overdetermined, all of the periodic constructions are fundamentally alike and can be deduced from our Theorem \ref{thm: quant}.

These are not the only methods for producing periodic traveling waves for FPUT, and we give a brief, selected overview of others here.
Friesecke and Mikikits-Leitner \cite{fml} adapted the perturbative approach for monatomic solitary waves from \cite{friesecke-pego1} to prove the existence of long wave periodics in the monatomic lattice that were small perturbations of a KdV cnoidal profile.
Pankov constructed periodics in the monatomic lattice using variational methods \cite{pankov}, as did Qin for mass dimers \cite{qin} with spring force given by the FPUT $\beta$-model, i.e., roughly of the form $\V'(r) = r+\O(r^3)$.
Betti and Pelinovsky used an implicit function theorem argument to produce periodics in mass dimers with Hertzian spring potentials \cite{betti-pelinovsky}, and we note with interest that their proofs also relied on symmetry to reduce the dimension of a key linearization's kernel.
Finally, we mention that Lombardi's spatial dynamics method for nanopterons under very general hypotheses includes the full development of periodics from that point of view \cite{lombardi}, with the more stringent requirement that the spring potentials be real analytic.

%%-----------------------------------------------------------%%
%%-----------------------------------------------------------%%
%%-----------------------------------------------------------%%
%%-----------------------------------------------------------%%
\subsection{Notation}
We summarize several aspects of notation that we will use without further comment.

%%-----------------------------------------------------------%%
\begin{enumerate}[label=$\bullet$]

%%-----------------------------------------------------------%%
\item
If $\X$ is a vector space, then $\ind_{\X}$ is the identity operator on $\X$.

%%-----------------------------------------------------------%%
\item
If $\X$, $\Y$, and $\Zcal$ are normed spaces and $f \colon \U \subseteq \X \times \Y \to \Zcal$ is differentiable at some $(x_0,y_0) \in \U$, then we denote its partial derivative at $(x_0,y_0)$ with respect to $x$ by $D_xf(x_0,y_0)$.
Likewise, $D_yf(x_0,y_0)$ is the partial derivative with respect to $y$.
We reserve the notation $f' = \partial_xf$ for a function $f \colon I \subseteq \R \to \R$.

%%-----------------------------------------------------------%%
\item
If $\X$ and $\Y$ are sets and $f \colon \U \subseteq \X \to \Y$ is a function, then for any $\U_0 \subseteq \U$ we denote by $\restr{f}{\U_0}$ the restriction of $f$ to $\U_0$.

%%-----------------------------------------------------------%%
\item
If $\X$ and $\Y$ are Hilbert spaces and $\T \colon \X \to \Y$ is a bounded linear operator, then $\T^* \colon \Y \to \X$ is the adjoint of $\T$.

%%-----------------------------------------------------------%%
\item
If $\X$ is a normed space, $x \in \X$, and $r > 0$, then $\B(x;r)$ is the open ball
\[
\B(x;r) 
:= \set{y \in \X}{\norm{x-y}_{\X} < r}.
\]

%%-----------------------------------------------------------%%
\item
If $\X$ and $\Y$ are normed spaces, then $\b(\X,\Y)$ is the space of bounded linear operators from $\X$ to $\Y$ with operator norm $\norm{\T}_{\X \to \Y}$.
\end{enumerate}

%%-----------------------------------------------------------%%
%%-----------------------------------------------------------%%
%%-----------------------------------------------------------%%
%%-----------------------------------------------------------%%
%%-----------------------------------------------------------%%
\section{Linear Analysis}
We assume familiarity here with the notation and conventions of Appendix \ref{app: per sob space} on periodic Sobolev spaces and Fourier coefficients.
Briefly, $\hat{\phib}(k)$ is the $k$th Fourier coefficient of $\phib \in L_{\per}^2(\R^2)$, and $\ip{\cdot}{\cdot}$ is the $L_{\per}^2$-inner product (we no longer keep the subscript here).

Our bifurcation analysis naturally hinges on a careful understanding of the linearization
\begin{equation}\label{eqn: Lc}
\L_c[\omega]
:= D_{\phib}\Phib_c(0,\omega)
\end{equation}
of the problem $\Phib_c(\phib,\omega) = 0$ at $\phib = 0$, where $\Phib_c$ was defined in \eqref{eqn: Phib}.
Using that definition of $\Phib_c$ and recalling from the hypotheses \eqref{eqn: mat data dimer} that the spring potentials satisfy 
\[
\V_1'(r) = r + \O(r^2)
\quadword{and}
\V_2'(r) = \kappa{r} + \O(r^2),
\]
we have
\[
\L_c[\omega]\phib
= c^2\omega^2M\phib'' + \D[\omega]\phib,
\]
where
\begin{equation}\label{eqn: Domega}
\D[\omega]
:= \begin{bmatrix*}
(1+\kappa) &-(S^{\omega}+\kappa{S}^{-\omega}) \\
-(\kappa{S}^{\omega}+S^{-\omega}) &(1+\kappa).
\end{bmatrix*}.
\end{equation}

We follow the strategy of the existing bifurcation arguments \cite[App.\@ C]{faver-wright}, \cite[Sec.\@ 3]{faver-spring-dimer}, \cite[Sec.\@ 5]{hoffman-wright}, \cite[Sec.\@ 3]{faver-hupkes-equal-mass}, \cite[Sec.\@ 3]{faver-mim-nanopteron} and begin by considering the kernel of $\L_c[\omega]$.
We have $\L_c[\omega]\phib = 0$ if and only if 
\begin{equation}\label{eqn: Fourier side kernel}
\tL_c(\omega{k})\hat{\phib}(k) 
= 0
\end{equation}
for all $k \in \Z$, where 
\begin{equation}\label{eqn: tL}
\tL_c(K) 
:= -c^2K^2M + \tD(K)
\end{equation}
and
\begin{equation}\label{eqn: tD}
\tD(K)
:= \begin{bmatrix*}
(1+\kappa) &-(e^{iK}+\kappa{e}^{-iK}) \\
-(\kappa{e}^{iK}+e^{-iK}) &(1+\kappa)
\end{bmatrix*}.
\end{equation}
Then \eqref{eqn: Fourier side kernel} is equivalent to
\[
c^2(\omega{k})^2\hat{\phib}(k)
= M^{-1}\tD(\omega{k})\hat{\phib}(k),
\]
and so, if $\hat{\phib}(k) \ne 0$, then $c^2(\omega{k})^2$ must be an eigenvalue of $M^{-1}\tD(\omega{k})$.
Any eigenvalue $\lambda$ of $M^{-1}\tD(K)$ must satisfy the characteristic equation
\[
\lambda^2 - (1+w)(1+\kappa)\lambda + 2\kappa{w}(2\cos^2(K)-1)
= 0,
\]
and so the eigenvalues are
\begin{equation}\label{eqn: tlambda}
\tlambda_{\pm}(K)
:= \frac{(1+\kappa)(1+w)}{2} \pm \frac{\trho(K)}{2},
\end{equation}
where
\begin{equation}\label{eqn: trho}
\trho(K) := \sqrt{(1+w)^2(1-\kappa)^2 + 4\kappa((1-w)^2+4w\cos^2(K))}
\quadword{and}
w := \frac{1}{m}.
\end{equation}

The following is proved in \cite[Prop.\@ 2.2.1]{faver-dissertation} about these eigenvalues.

%%-----------------------------------------------------------%%
%%-----------------------------------------------------------%%
\begin{theorem}\label{thm: eigenvalues}
Suppose that at least one of the inequalities $\kappa > 1$ or $w > 1$ holds and define the ``speed of sound'' to be
\begin{equation}\label{eqn: speed of sound}
c_{\star}
:= \sqrt{\frac{4\kappa{w}}{(1+\kappa)(1+w)}}.
\end{equation}

%%-----------------------------------------------------------%%
\begin{enumerate}[label={\bf(\roman*)}]

%%-----------------------------------------------------------%%
\item
If $|c| > c_{\star}$, then $c^2K^2 = \tlambda_-(K)$ if and only if $K = 0$.

%%-----------------------------------------------------------%%
\item
If $|c| > c_{\star}$, then there exists $\omega_c > 0$ such that $c^2K^2 = \tlambda_+(K)$ if and only if $K = \pm\omega_c$.

%%-----------------------------------------------------------%%
\item
This ``critical frequency'' $\omega_c$ satisfies the estimates
\begin{equation}\label{eqn: omegac-ineq}
\frac{\sqrt{\tlambda_+(\pi/2)}}{c}
\le \omega_c
\le \frac{\sqrt{(1+\kappa)(1+w)}}{c}.
\end{equation}
and
\begin{equation}\label{eqn: omegac transversality}
\inf_{|c| > c_{\star}} 2c^2\omega_c-\tlambda_+'(\omega_c) 
> 0.
\end{equation}
\end{enumerate}
\end{theorem}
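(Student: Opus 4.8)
The plan is to reduce the whole statement to elementary one-variable analysis of the two dispersion branches. First, a short computation from \eqref{eqn: trho} rewrites the radical as $\trho(K)^2 = b^2 - 16\kappa{w}\sin^2(K)$, where I abbreviate $b := (1+\kappa)(1+w)$, so that \eqref{eqn: tlambda} and \eqref{eqn: speed of sound} give
\[
\tlambda_{\pm}(K) = \frac{b \pm \sqrt{b^2 - 16\kappa{w}\sin^2(K)}}{2},
\qquad
\tlambda_+(K)\tlambda_-(K) = 4\kappa{w}\sin^2(K),
\qquad
c_{\star}^2 b = 4\kappa{w}.
\]
Both branches are even, $\pi$-periodic, and monotone in $\sin^2(K)$: as $\sin^2(K)$ runs over $[0,1]$, $\tlambda_-$ increases from $\tlambda_-(0) = 0$ to $\tlambda_-(\pi/2)$ and $\tlambda_+$ decreases from $\tlambda_+(0) = b$ to $\tlambda_+(\pi/2)$, so $0 \le \tlambda_-(K) \le \tlambda_-(\pi/2)$ and $\tlambda_+(\pi/2) \le \tlambda_+(K) \le b$ for every $K \in \R$. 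Finally, the hypothesis that $\kappa > 1$ or $w > 1$, combined with $(1+\kappa)^2 \ge 4\kappa$ and $(1+w)^2 \ge 4w$, yields the strict inequality $b^2 > 16\kappa{w}$, so $\trho$ is strictly positive on $\R$ and $\mu := 16\kappa{w}/b^2 \in (0,1)$.

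For \textbf{(i)} I would establish the pointwise bound $\tlambda_-(K) \le c_{\star}^2 K^2$ for all $K \in \R$. Granting it, $|c| > c_{\star}$ forces $c^2 K^2 > c_{\star}^2 K^2 \ge \tlambda_-(K)$ whenever $K \neq 0$, while $c^2 \cdot 0^2 = 0 = \tlambda_-(0)$, which is precisely (i). Using $\frac{b}{2}\mu = 2c_{\star}^2$, the bound is equivalent to $2\bigl(1 - \sqrt{1 - \mu\sin^2(K)}\bigr) \le \mu K^2$. Fixing $K$ and viewing $F(\mu) := \mu K^2 - 2\bigl(1 - \sqrt{1 - \mu\sin^2(K)}\bigr)$ as a function on $[0,1]$, one checks that $F$ is concave ($F'' < 0$), that $F(0) = 0$, and that $F(1) = K^2 - 2(1 - |\cos K|) \ge 0$ because $|\cos K| \ge \cos K$ and $1 - \cos K = 2\sin^2(K/2) \le K^2/2$. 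A concave function on $[0,1]$ that is nonnegative at both endpoints is nonnegative throughout, so $F \ge 0$ on $[0,1]$; evaluating at the relevant value $\mu \in (0,1)$ gives the claim.

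For \textbf{(ii)} and the estimate \eqref{eqn: omegac-ineq} I would study $G(K) := c^2 K^2 - \tlambda_+(K)$, which is even with $G(0) = -b < 0$. Since $\tlambda_+(\pi/2) \le \tlambda_+(K) \le b$, a positive zero $K$ of $G$ satisfies $\tlambda_+(\pi/2) \le c^2 K^2 \le b$, hence lies in $[K_-, K_+]$ with $K_- := \sqrt{\tlambda_+(\pi/2)}/|c|$ and $K_+ := \sqrt{b}/|c|$; moreover $G < 0$ on $(0, K_-)$. From $G(K_-) = \tlambda_+(\pi/2) - \tlambda_+(K_-) \le 0 \le b - \tlambda_+(K_+) = G(K_+)$, the intermediate value theorem produces a zero $\omega_c \in [K_-, K_+]$, and since $K_+ = \sqrt{(1+\kappa)(1+w)}/|c|$ this is already the content of \eqref{eqn: omegac-ineq}. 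Uniqueness of the positive zero follows once $G' > 0$ on $[K_-, \infty)$. Differentiating \eqref{eqn: tlambda}--\eqref{eqn: trho} gives $\tlambda_+'(K) = -4\kappa{w}\sin(2K)/\trho(K)$, and substituting $t := \trho(K)^2$ and $\sin^2(2K) = 4\sin^2(K)\bigl(1 - \sin^2(K)\bigr)$ turns this into
\[
\tlambda_+'(K)^2 = \frac{(b^2 - t)\bigl(t - (b^2 - 16\kappa{w})\bigr)}{4t},
\qquad
t \in [b^2 - 16\kappa{w},\, b^2].
\]
The right-hand side is concave in $t$ on that interval, so optimizing gives the sharp uniform bound $|\tlambda_+'(K)| \le \frac{1}{2}\bigl(b - \sqrt{b^2 - 16\kappa{w}}\bigr) = \tlambda_-(\pi/2)$. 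Since $2c^2 K \ge 2c^2 K_- = 2|c|\sqrt{\tlambda_+(\pi/2)} > 2c_{\star}\sqrt{\tlambda_+(\pi/2)}$ for $K \ge K_-$, strict monotonicity of $G$ on $[K_-, \infty)$ reduces to the scalar inequality $2c_{\star}\sqrt{\tlambda_+(\pi/2)} > \tlambda_-(\pi/2)$, which after squaring and using $c_{\star}^2 b = 4\kappa{w}$ becomes $\bigl(b + \sqrt{b^2 - 16\kappa{w}}\bigr)^3 > 8\kappa{w}b$; this holds since that cube is at least $b^3 \ge 16\kappa{w}b$. Hence $G$ has a unique positive zero $\omega_c$, and with $G(0) < 0$ and evenness this proves (ii).

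The transversality estimate \eqref{eqn: omegac transversality} is the surplus already generated above: for every $|c| > c_{\star}$,
\[
2c^2\omega_c - \tlambda_+'(\omega_c)
= G'(\omega_c)
\ge 2|c|\sqrt{\tlambda_+(\pi/2)} - \tlambda_-(\pi/2)
> 2c_{\star}\sqrt{\tlambda_+(\pi/2)} - \tlambda_-(\pi/2),
\]
and the right-hand side is a strictly positive constant depending only on $\kappa$ and $w$, so the infimum over $|c| > c_{\star}$ is positive. I expect the step demanding the most care to be the sharp uniform bound on $|\tlambda_+'|$: the crude estimate $|\tlambda_+'| \le 4\kappa{w}/\sqrt{b^2 - 16\kappa{w}}$ degenerates as $\kappa, w \to 1^+$, so the optimization over $t = \trho^2$ is genuinely needed; and because $\tlambda_-(K) \le c_{\star}^2 K^2$ is tight (equality in the limit $K \to 0$), the concavity-in-$\mu$ device, rather than a term-by-term estimate, is the efficient path to (i).
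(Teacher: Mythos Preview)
The paper does not supply its own proof of this theorem; it simply defers to \cite[Prop.\ 2.2.1]{faver-dissertation}. Your argument is a correct, fully self-contained proof, and I find no gaps. The rewriting $\trho(K)^2 = b^2 - 16\kappa w\sin^2(K)$ with $b=(1+\kappa)(1+w)$ is accurate, the concavity-in-$\mu$ trick cleanly establishes $\tlambda_-(K)\le c_\star^2K^2$ (the endpoint check $F(1)\ge0$ via $2(1-|\cos K|)\le K^2$ is valid), and the optimization over $t=\trho^2$ giving the sharp bound $|\tlambda_+'(K)|\le \tlambda_-(\pi/2)$ is correct and, as you note, essential since the naive bound $4\kappa w/\sqrt{b^2-16\kappa w}$ blows up near $\kappa=w=1$. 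The final scalar inequality $(b+\sqrt{b^2-16\kappa w})^3>8\kappa w\,b$ follows immediately from $b^2>16\kappa w$, and the chain of inequalities leading to the uniform transversality bound is sound. One cosmetic point: the paper's statement writes $c$ rather than $|c|$ in \eqref{eqn: omegac-ineq}; your use of $|c|$ is the correct reading.
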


We will ultimately run several bifurcation arguments from the critical frequency $\omega_c$.
To do that, we need a good understanding of the kernel and cokernel of $\L_c[\omega_c]$.
We carefully compute the following in Appendix \ref{app: ker coker Lc-omegac}.

%%-----------------------------------------------------------%%
%%-----------------------------------------------------------%%
\begin{corollary}\label{cor: eigenfunctions}
The kernels of both 
\[
\L_c[\omega_c] \colon H_{\per}^2(\R^2) \to L_{\per}^2(\R^2)
\quadword{and}
\L_c[\omega_c]^* \colon L_{\per}^2(\R^2) \to H_{\per}^2(\R^2)
\]
are spanned by the orthonormal vectors $\nub_0$, $\nub_1^c$, and $\nub_2^c$ defined by
\begin{equation}\label{eqn: nub0}
\nub_0 
:= \frac{1}{\sqrt{2}}\begin{pmatrix*} 
1 \\ 
1 
\end{pmatrix*},
\end{equation}
\begin{equation}\label{eqn: nub1}
\nub_1^c(x) 
:= \frac{e^{-ix}}{\Nu_c}\begin{pmatrix*}
e^{-i\omega_c}+\kappa{e}^{i\omega_c} \\
1+\kappa-c^2\omega_c^2
\end{pmatrix*}
+ \frac{e^{ix}}{\Nu_c}\begin{pmatrix*}
e^{i\omega_c}+\kappa{e}^{-i\omega_c} \\
1+\kappa-c^2\omega_c^2
\end{pmatrix*}
\end{equation}
and
\begin{equation}\label{eqn: nub2}
\nub_2^c(x)
:= \frac{e^{-ix}}{\Nu_c}\begin{pmatrix*}
i(e^{-i\omega_c}+\kappa{e}^{i\omega_c}) \\
i(1+\kappa-c^2\omega_c^2)
\end{pmatrix*}
+ \frac{e^{ix}}{\Nu_c}\begin{pmatrix*}
-i(e^{i\omega_c}+\kappa{e}^{-i\omega_c}) \\
-i(1+\kappa-c^2\omega_c^2)
\end{pmatrix*},
\end{equation}
where
\begin{equation}\label{eqn: Nu-c}
\Nu_c
:= \sqrt{2}\left(\big[(1-\kappa)^2+4\kappa\cos(\omega_c)\big]^2 + \left[\frac{(1+\kappa)(1-w)+\trho(\omega_c)}{2}\right]^2\right)^{1/2}.
\end{equation}
The eigenfunctions $\nub_1^c$ and $\nub_2^c$ satisfy the derivative identities
\begin{equation}\label{eqn: nub derivatives}
\partial_x\nub_1^c = -\nub_2^c
\quadword{and}
\partial_x\nub_2^c = \nub_1^c
\end{equation}
and the shift identity
\begin{equation}\label{eqn: nub shifts}
\nub_2^c
= S^{-\pi/2}\nub_1^c.
\end{equation}
A function $\phib \in L_{\per}^2(\R^2)$ satisfies
\begin{equation}\label{eqn: nub ortho equiv}
\ip{\phib}{\nub_1^c} = \ip{\phib}{\nub_2^c} = 0
\quadword{if and only if}
\hat{\phib}(1)\cdot\hat{\nub_1^c}(1) = 0.
\end{equation}
\end{corollary}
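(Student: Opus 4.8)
The plan is to diagonalize $\L_c[\omega_c]$ with the Fourier transform, read off its kernel one Fourier mode at a time with the help of Theorem \ref{thm: eigenvalues}, deduce the cokernel from the (formal) self-adjointness of $\L_c[\omega_c]$, and finally extract the derivative, shift, and orthogonality identities by inspecting Fourier coefficients. Concretely, $\widehat{\L_c[\omega]\phib}(k) = \tL_c(\omega k)\hat\phib(k)$ with $\tL_c$ as in \eqref{eqn: tL}, so a function $\phib \in H_{\per}^2(\R^2)$ lies in $\ker\L_c[\omega_c]$ if and only if $\tL_c(\omega_c k)\hat\phib(k) = 0$ for every $k \in \Z$. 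The mode $k = 0$ reads $\tD(0)\hat\phib(0) = 0$; since $\tD(0) = (1+\kappa)\bigl[\begin{smallmatrix} 1 & -1 \\ -1 & 1 \end{smallmatrix}\bigr]$ has one-dimensional kernel $\R\,(1,1)^\top$, this supplies the $\nub_0$-direction. For $k \ne 0$, a nonzero $\hat\phib(k)$ forces $c^2(\omega_c k)^2$ to be an eigenvalue of $M^{-1}\tD(\omega_c k)$, i.e.\@ $c^2(\omega_c k)^2 \in \{\tlambda_-(\omega_c k), \tlambda_+(\omega_c k)\}$; Theorem \ref{thm: eigenvalues}(i) rules out the $\tlambda_-$ alternative because $\omega_c k \ne 0$, and Theorem \ref{thm: eigenvalues}(ii) forces $\omega_c k = \pm\omega_c$, hence $k = \pm 1$, in the $\tlambda_+$ alternative. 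Thus $\hat\phib(k) = 0$ for $|k| \ge 2$ and $\hat\phib(\pm 1) \in \ker\tL_c(\pm\omega_c)$.

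Next I would compute $\ker\tL_c(\pm\omega_c)$ explicitly. A short estimate shows that the radicand in \eqref{eqn: trho} is strictly positive under the heterogeneity hypothesis \eqref{eqn: mat data hetero lin}, so $\trho(\omega_c) > 0$, the eigenvalue $c^2\omega_c^2 = \tlambda_+(\omega_c)$ of $M^{-1}\tD(\omega_c)$ is simple, and $\ker\tL_c(\omega_c)$ is the complex line through the vector $v := (e^{i\omega_c}+\kappa e^{-i\omega_c},\, 1+\kappa-c^2\omega_c^2)^\top$ obtained from the first row of $\tD(\omega_c) - c^2\omega_c^2 M$; conjugating, via $\tD(-\omega_c) = \overline{\tD(\omega_c)}$, gives $\ker\tL_c(-\omega_c)$. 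Because $L_{\per}^2(\R^2)$ consists of $\R^2$-valued functions one has $\hat\phib(-1) = \overline{\hat\phib(1)}$, so $\ker\L_c[\omega_c]$ is the set of functions $t(1,1)^\top + 2\,\mathrm{Re}(z\,v\,e^{ix})$ with $t \in \R$ and $z$ complex; splitting $z$ into real and imaginary parts exhibits this space as $\mathrm{span}_{\R}\{\nub_0,\nub_1^c,\nub_2^c\}$ with $\nub_0,\nub_1^c,\nub_2^c$ as in \eqref{eqn: nub0}--\eqref{eqn: nub2}, and the constant $\Nu_c$ in \eqref{eqn: Nu-c} is exactly the one that makes $\norm{\nub_1^c}_{L_{\per}^2} = 1$ via Parseval (a trigonometric identity, using $c^2\omega_c^2 = \tlambda_+(\omega_c)$). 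Orthogonality of the three vectors is then automatic from Parseval: $\nub_0$ is supported at Fourier index $0$ while $\nub_1^c$ and $\nub_2^c$ live at $\pm 1$, and $\ip{\nub_1^c}{\nub_2^c} = 0$ follows from the relation $\hat{\nub_2^c}(\pm 1) = \mp i\,\hat{\nub_1^c}(\pm 1)$ visible in \eqref{eqn: nub1}--\eqref{eqn: nub2}. Finally, because $\tD(K)$, and hence $\tL_c(K)$, is Hermitian, $\L_c[\omega_c]$ is formally self-adjoint, so $\ker\L_c[\omega_c]^* = \bigl(\mathrm{ran}\,\L_c[\omega_c]\bigr)^\perp$ is obtained from the same mode-by-mode analysis --- now using that $\mathrm{ran}\,\tL_c(\pm\omega_c)$ is the orthogonal complement of $\ker\tL_c(\pm\omega_c)$ --- and again equals $\mathrm{span}\{\nub_0,\nub_1^c,\nub_2^c\}$.

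It remains to harvest the identities. The derivative relations \eqref{eqn: nub derivatives} and the shift relation \eqref{eqn: nub shifts} fall out of $\widehat{\partial_x\psib}(k) = ik\,\hat\psib(k)$ and $\widehat{S^{-\pi/2}\psib}(k) = e^{-i\pi k/2}\,\hat\psib(k)$ together with $\hat{\nub_2^c}(\pm 1) = \mp i\,\hat{\nub_1^c}(\pm 1)$. For \eqref{eqn: nub ortho equiv}, since $\nub_1^c$ and $\nub_2^c$ are supported at Fourier indices $\pm 1$, Parseval gives, for any $\phib \in L_{\per}^2(\R^2)$, the expressions $\ip{\phib}{\nub_1^c} = A + B$ and $\ip{\phib}{\nub_2^c} = i(A - B)$, where $A := \hat\phib(1)\cdot\hat{\nub_1^c}(1)$ and $B := \hat\phib(-1)\cdot\hat{\nub_1^c}(-1)$ and the second expression again uses $\hat{\nub_2^c}(\pm 1) = \mp i\,\hat{\nub_1^c}(\pm 1)$; both inner products vanish if and only if $A = B = 0$, and since $\phib$ is $\R^2$-valued and $\hat{\nub_1^c}(-1) = \overline{\hat{\nub_1^c}(1)}$, we have $B = \overline{A}$, so the pair of conditions collapses to the single scalar equation $A = 0$, which is \eqref{eqn: nub ortho equiv}.

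I expect the main obstacle to be the finite-dimensional linear algebra of the second paragraph: confirming $\trho(\omega_c) > 0$ so that only the indices $k \in \{-1,0,1\}$ contribute and each $\ker\tL_c(\pm\omega_c)$ has complex dimension one, so that the kernel and cokernel of $\L_c[\omega_c]$ have real dimension exactly three and no more, and then executing the Parseval computation that pins down the normalization constant $\Nu_c$. Once the eigenfunctions are in hand, \eqref{eqn: nub derivatives}, \eqref{eqn: nub shifts}, and \eqref{eqn: nub ortho equiv} are bookkeeping with Fourier coefficients, the one point to watch being the use of the reality of $\phib$ to collapse the pair of orthogonality conditions $\phib \perp \nub_1^c$, $\phib \perp \nub_2^c$ to the single equation $\hat\phib(1)\cdot\hat{\nub_1^c}(1) = 0$.
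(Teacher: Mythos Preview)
Your proposal is correct and follows essentially the same Fourier-multiplier, mode-by-mode approach as the paper's proof in Appendix~\ref{app: ker coker Lc-omegac}. The one step the paper treats more carefully is the nondegeneracy of the eigenvector at $k=\pm1$: rather than inferring simplicity from $\trho(\omega_c)>0$ and reading $v$ off the first row of $\tL_c(\omega_c)$, the paper explicitly proves $1+\kappa-c^2\omega_c^2\ne 0$ (so that $v\ne 0$), and this yields a uniform-in-$c$ lower bound that is reused in the transversality and coercivity proofs.
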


Thus the kernel and the cokernel of the linearization of the traveling wave problem \eqref{eqn: periodic tw system Phib} are ostensibly three-dimensional.
Translation invariance (Corollary \ref{cor: trans invar}) will allow us to rule out $\nub_0$, and so we are down to two dimensions.

We also need to understand the interaction of the mixed partial derivative $\L_c'[\omega_c] := D_{\phib\omega}\Phib_c(0,\omega_c)$ with the eigenfunction $\nub_1$.
It follows from Appendix \ref{app: shift operator calculus}, specifically the identity \eqref{eqn: shift derivative}, that $\L_c'[\omega_c]$ is the Fourier multiplier given by 
\begin{equation}\label{eqn: Lc-prime}
\hat{\L_c'[\omega_c]\phib}(k)
= k\tL_c'(\omega_ck)\hat{\phib}(k)
\end{equation}
with $\tL_c'$ as the componentwise derivative of the matrix $\tL_c$ from \eqref{eqn: tL}.
We prove the following estimate in Appendix \ref{app: transversality}.
This is the direct analogue of the classical Crandall--Rabinowitz--Zeidler transversality condition \cite[Eqn.\@ (I.5.3)]{kielhofer} for our approach.

%%-----------------------------------------------------------%%
%%-----------------------------------------------------------%%
\begin{corollary}\label{cor: transversality}
$\inf_{|c| > c_{\star}} |\ip{\L_c'[\omega_c]\nub_1^c}{\nub_1^c}| > 0$.
\end{corollary}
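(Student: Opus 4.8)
\emph{Proof proposal.} The plan is to reduce the inner product to a finite computation on the single active Fourier mode of $\nub_1^c$ (which by \eqref{eqn: nub1} is supported on $k=\pm1$), recognize the resulting $2\times2$ expression as the transversal derivative of the dispersion branch $K\mapsto\tlambda_+(K)-c^2K^2$ at $K=\omega_c$ via first-order eigenvalue perturbation, and then quote the uniform bound \eqref{eqn: omegac transversality} of Theorem \ref{thm: eigenvalues}.

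First I would combine \eqref{eqn: Lc-prime} with the Parseval identity of Appendix \ref{app: per sob space} and the support property $\hat{\nub_1^c}(k)=0$ for $k\notin\{-1,1\}$ to write $\ip{\L_c'[\omega_c]\nub_1^c}{\nub_1^c}$ as the Parseval constant times $\langle\tL_c'(\omega_c)\hat{\nub_1^c}(1),\hat{\nub_1^c}(1)\rangle_{\C^2}-\langle\tL_c'(-\omega_c)\hat{\nub_1^c}(-1),\hat{\nub_1^c}(-1)\rangle_{\C^2}$. Since $\nub_1^c$ is real we have $\hat{\nub_1^c}(-1)=\overline{\hat{\nub_1^c}(1)}$, and since $\tD(K)$ from \eqref{eqn: tD} is Hermitian for real $K$ while $M$ is real, $\tL_c(-K)=\overline{\tL_c(K)}$ and hence $\tL_c'(-K)=-\overline{\tL_c'(K)}$; also $\tL_c'(\omega_c)$ is Hermitian, so $\langle\tL_c'(\omega_c)\hat{\nub_1^c}(1),\hat{\nub_1^c}(1)\rangle$ is real. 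These facts show the two contributions coincide, giving $\ip{\L_c'[\omega_c]\nub_1^c}{\nub_1^c}=2C\langle\tL_c'(\omega_c)\hat{\nub_1^c}(1),\hat{\nub_1^c}(1)\rangle$, where $C$ is the same normalization constant appearing in $\norm{\nub_1^c}_{L_{\per}^2}=1$.

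Next I would diagonalize by factoring, with $M$ from \eqref{eqn: M},
\[
\tL_c(K)=M^{1/2}\bigl(M^{-1/2}\tD(K)M^{-1/2}-c^2K^2\ind\bigr)M^{1/2}=:M^{1/2}\tilde A(K)M^{1/2},
\]
so $\tilde A(K)$ is Hermitian with eigenvalues $\tlambda_\pm(K)-c^2K^2$. By Theorem \ref{thm: eigenvalues}(i)--(ii) the eigenvalue $\tlambda_+(\omega_c)-c^2\omega_c^2$ vanishes while $\tlambda_-(\omega_c)-c^2\omega_c^2\ne0$, and $\trho(\omega_c)>0$ (immediate from \eqref{eqn: trho} under the heterogeneity hypothesis), so the branch $\mu(K):=\tlambda_+(K)-c^2K^2$ is smooth and simple near $\omega_c$ with a smooth eigenvector $\tilde v(K)$, where $\tilde v(\omega_c)$ is a scalar multiple of $M^{1/2}\hat{\nub_1^c}(1)$ because that vector spans $\ker\tilde A(\omega_c)$ (using $\nub_1^c\ne0$ and $\tL_c(\omega_c)\hat{\nub_1^c}(1)=0$). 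The Hellmann--Feynman formula $\mu'(\omega_c)=\langle\tilde A'(\omega_c)\tilde v(\omega_c),\tilde v(\omega_c)\rangle/\langle\tilde v(\omega_c),\tilde v(\omega_c)\rangle$, applied with the unnormalized vector $M^{1/2}\hat{\nub_1^c}(1)$ and using the identity $M^{1/2}\tilde A'(\omega_c)M^{1/2}=\tL_c'(\omega_c)$, yields
\[
\tlambda_+'(\omega_c)-2c^2\omega_c=\frac{\langle\tL_c'(\omega_c)\hat{\nub_1^c}(1),\hat{\nub_1^c}(1)\rangle_{\C^2}}{\langle M\hat{\nub_1^c}(1),\hat{\nub_1^c}(1)\rangle_{\C^2}}.
\]
Multiplying through by $2C\langle M\hat{\nub_1^c}(1),\hat{\nub_1^c}(1)\rangle$ and using $\ip{M\nub_1^c}{\nub_1^c}=2C\langle M\hat{\nub_1^c}(1),\hat{\nub_1^c}(1)\rangle$, the constant $C$ cancels and I obtain the clean identity $\ip{\L_c'[\omega_c]\nub_1^c}{\nub_1^c}=\bigl(\tlambda_+'(\omega_c)-2c^2\omega_c\bigr)\ip{M\nub_1^c}{\nub_1^c}$. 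Finally, $M\ge\min\{1,m\}\,\ind$ and $\norm{\nub_1^c}_{L_{\per}^2}=1$ give $\ip{M\nub_1^c}{\nub_1^c}\ge\min\{1,m\}>0$, so $\bigl|\ip{\L_c'[\omega_c]\nub_1^c}{\nub_1^c}\bigr|\ge\min\{1,m\}\inf_{|c|>c_\star}|2c^2\omega_c-\tlambda_+'(\omega_c)|$, which is positive by \eqref{eqn: omegac transversality}.

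The main obstacle is bookkeeping rather than ideas: pinning down the Fourier/Parseval conventions of Appendix \ref{app: per sob space} so that the normalization genuinely drops out, checking the conjugation symmetry and Hermiticity of $\tL_c$ and $\tL_c'(\omega_c)$, and---the one substantive point---justifying that $\mu(K)=\tlambda_+(K)-c^2K^2$ is the relevant smooth, simple eigenvalue branch through $0$ near $\omega_c$, so that Hellmann--Feynman applies and produces exactly the quantity $2c^2\omega_c-\tlambda_+'(\omega_c)$ controlled by Theorem \ref{thm: eigenvalues}. A fully elementary alternative is to compute $\tL_c'(\omega_c)$ and $\hat{\nub_1^c}(1)$ explicitly from \eqref{eqn: tD}, \eqref{eqn: nub1}, and \eqref{eqn: Nu-c} and expand the $2\times2$ bracket by hand, which sidesteps perturbation theory but still requires \eqref{eqn: omegac transversality} (or an equivalent) to achieve uniformity over $|c|>c_\star$.
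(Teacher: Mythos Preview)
Your argument is correct and reaches the same destination as the paper but by a genuinely different route. The paper (Appendix \ref{app: transversality}) proceeds by brute-force algebra: it expands $\tL_c'(\omega_c)\hat{\nub_1^c}(1)\cdot\hat{\nub_1^c}(1)$ componentwise, manipulates the resulting expression through several auxiliary identities (notably \eqref{eqn: transv extra factor}, which shows $w|v_1|^2+v_2^2=-\trho(\omega_c)v_2$), and eventually arrives at the closed form
\[
\ip{\L_c'[\omega_c]\nub_1^c}{\nub_1^c}=\frac{2\trho(\omega_c)v_2}{w\Nu_c^2}\bigl(2c^2\omega_c-\tlambda_+'(\omega_c)\bigr),
\]
after which it must separately argue that $\trho(\omega_c)$, $v_2=1+\kappa-c^2\omega_c^2$, and $\Nu_c^{-1}$ are each uniformly bounded away from zero. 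Your Hellmann--Feynman argument replaces all of that algebra with a single eigenvalue-perturbation identity and yields the equivalent but cleaner factorization $\ip{\L_c'[\omega_c]\nub_1^c}{\nub_1^c}=(\tlambda_+'(\omega_c)-2c^2\omega_c)\ip{M\nub_1^c}{\nub_1^c}$; the residual factor is then bounded below by $\min\{1,m\}$ in one line, with no need to control $\trho$, $v_2$, or $\Nu_c$ individually. (One can verify the two formulas agree: $\ip{M\nub_1^c}{\nub_1^c}=2\Nu_c^{-2}(|v_1|^2+m v_2^2)=-2\trho(\omega_c)v_2/(w\Nu_c^2)$ by the paper's identity \eqref{eqn: transv extra factor}.) What the paper's approach buys is that it is entirely self-contained and requires no appeal to smoothness of eigenprojections; what yours buys is transparency and a dramatically shorter path to the uniform lower bound. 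Your closing remark already anticipates the paper's route as the ``fully elementary alternative.''
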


Last, we will need the following coercive estimate on $\L_c[\omega_c]$, proved in Appendix \ref{app: coercive}.

%%-----------------------------------------------------------%%
%%-----------------------------------------------------------%%
\begin{corollary}\label{cor: coercive}
There is $C > 0$ such that the following holds for all $c$ with $|c| > c_{\star}$ and all $r \ge 0$.
If $\L_c[\omega_c]\psib = \etab$ for $\psib \in H_{\per}^{r+2}(\R^2)$ and $\etab \in H_{\per}^r(\R^2)$ with
\[
\ip{\psib}{\nub_0} = \ip{\psib}{\nub_1^c} = \ip{\psib}{\nub_2^c} = 0
\quadword{and}
\ip{\etab}{\nub_0} = \ip{\etab}{\nub_1^c} = \ip{\etab}{\nub_2^c} = 0,
\]
then
\[
\norm{\psib}_{H_{\per}^{r+2}}
\le C\norm{\etab}_{H_{\per}^r}.
\]
\end{corollary}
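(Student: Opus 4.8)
The plan is to work entirely on the Fourier side, where $\L_c[\omega_c]$ acts diagonally. For $\psib \in H_{\per}^{r+2}(\R^2)$ satisfying the three stated orthogonality conditions and $\etab = \L_c[\omega_c]\psib$, the equation $\L_c[\omega_c]\psib = \etab$ reads $\tL_c(\omega_c k)\hat{\psib}(k) = \hat{\etab}(k)$ for every $k \in \Z$, with $\tL_c(K) = -c^2K^2M + \tD(K)$ as in \eqref{eqn: tL}. The orthogonality hypotheses, together with the characterization \eqref{eqn: nub ortho equiv} (and the observation that $\nub_0$ lives purely in the $k=0$ mode while $\nub_1^c,\nub_2^c$ live purely in the $k=\pm 1$ modes), force $\hat{\psib}(0)$ to be orthogonal to $(1,1)^\top$ and $\hat{\psib}(\pm 1)$ to be orthogonal to $\hat{\nub_1^c}(\pm 1)$; by reality of $\psib$ it suffices to control $\hat{\psib}(0)$, $\hat{\psib}(1)$, and $\hat{\psib}(k)$ for $|k| \ge 2$, and similarly for $\etab$.

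The heart of the matter is a uniform lower bound on the relevant restriction of $\tL_c(\omega_c k)$. I would split into three ranges. First, $k = 0$: here $\tL_c(0) = \tD(0) = \begin{bmatrix*} 1+\kappa & -(1+\kappa) \\ -(1+\kappa) & 1+\kappa \end{bmatrix*}$, whose kernel is exactly $\mathrm{span}\,(1,1)^\top$ and whose other eigenvalue is $2(1+\kappa) > 0$; so on the orthogonal complement of $\nub_0$ the operator is invertible with norm bounded below by a $c$-independent constant. Second, $k = \pm 1$: by Theorem \ref{thm: eigenvalues}(ii), $c^2\omega_c^2 = \tlambda_+(\omega_c)$, so $\tL_c(\pm\omega_c)$ has $\hat{\nub_1^c}(\pm1)$ in its kernel and its other eigenvalue is $\tlambda_+(\omega_c) - \tlambda_-(\omega_c) = \trho(\omega_c)$ (up to the $M^{-1}$ normalization); since $\trho$ is bounded below by $|1-\kappa|(1+w)$ or similar and $\omega_c$ ranges over a set where $\trho(\omega_c)$ stays positive uniformly in $c$ by the estimates \eqref{eqn: omegac-ineq}, we get a uniform lower bound on $\tL_c(\pm\omega_c)$ restricted to $\hat{\nub_1^c}(\pm1)^\perp$. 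Third, and this is the main obstacle, $|k| \ge 2$: here $\tL_c(\omega_c k)$ is fully invertible, and I must show $\|\tL_c(\omega_c k)^{-1}\| \lesssim (\omega_c k)^{-2}$ uniformly in $c$, which is what converts a bound on $\|\etab\|_{H_{\per}^r}$ into one on $\|\psib\|_{H_{\per}^{r+2}}$ (recall the $H^s$ norm weights mode $k$ by $(1+k^2)^{s/2}$, so the two extra derivatives are exactly the $k^2$ gained from inverting the leading $-c^2k^2\omega_c^2 M$ term). Since $\tD(K)$ is bounded uniformly in $K$ and $\omega_c \ge \sqrt{\tlambda_+(\pi/2)}/c$ gives $c\omega_c$ bounded below, one has $\tL_c(\omega_c k) = -c^2\omega_c^2 k^2 M (\ind - (c^2\omega_c^2 k^2)^{-1} M^{-1}\tD(\omega_c k))$, and for $|k|$ large enough the Neumann series converges with a $c$-uniform rate; the finitely many remaining intermediate values of $k$ are handled by checking that $\det \tL_c(\omega_c k) \neq 0$ there — which is precisely the content of Theorem \ref{thm: eigenvalues}(i)–(ii), namely that $c^2 K^2 \in \{\tlambda_\pm(K)\}$ only at $K \in \{0, \pm\omega_c\}$ — and then extracting uniformity from continuity and the compactness implicit in ranging $c$ over $|c| > c_\star$ (with care near $c \to \infty$, where the rescaling $c\omega_c$ bounded handles things).

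Assembling these three pieces: mode by mode, $|\hat{\psib}(k)| \le C (1+k^2)^{-1} |\hat{\etab}(k)|$ with $C$ independent of $c$ and of $k$ (the cases $k=0,\pm1$ contribute a bounded factor, the tail contributes the decaying one). Multiplying by the weight $(1+k^2)^{(r+2)/2}$, summing in $k$, and using $(1+k^2)^{(r+2)/2}(1+k^2)^{-1} = (1+k^2)^{r/2}$ yields $\|\psib\|_{H_{\per}^{r+2}} \le C \|\etab\|_{H_{\per}^r}$, as claimed. I expect essentially all of this to be assembled in the cited Appendix \ref{app: coercive}; the one point requiring genuine care is the $c$-uniformity of the Neumann-series threshold and of the resolvent bound at the intermediate modes, for which the explicit estimates \eqref{eqn: omegac-ineq} on $\omega_c$ and the strict inequality $|c| > c_\star$ are exactly the tools needed.
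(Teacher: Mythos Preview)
Your overall strategy matches the paper's exactly: pass to Fourier coefficients, split into the cases $k=0$, $k=\pm1$, and $|k|\ge 2$, and exploit the orthogonality hypotheses at the low modes. The treatments of $k=0$ and $k=\pm1$ are essentially the same as the paper's (the paper solves the $2\times2$ systems by hand after using the orthogonality relations \eqref{eqn: phi2 in terms of phi1 0} and \eqref{eqn: phi2 in terms of phi1}, while you phrase things spectrally, but the content is identical).

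The one substantive difference is in the handling of $|k|\ge 2$. You propose Neumann series for $|k|$ sufficiently large and then a compactness/continuity argument for the finitely many intermediate modes. As written, that compactness step is shaky: the parameter range $|c|>c_\star$ is neither closed nor bounded, so you cannot simply invoke compactness in $c$; your hand-wave about $c\to\infty$ would need to be made precise, and the limit $c\downarrow c_\star$ also requires care. The paper sidesteps this entirely by showing that the Neumann series already converges for \emph{every} $|k|\ge 2$, uniformly in $c$: using $|M^{-1}\tD(\omega_ck)|_\infty \le (1+\kappa)w$ together with the lower bound $c^2\omega_c^2 \ge \tlambda_+(\pi/2) \ge (1+\kappa)(1+w)/2$ from \eqref{eqn: omegac-ineq}, one gets
\[
\frac{1}{c^2\omega_c^2k^2}\big|M^{-1}\tD(\omega_ck)\big|_2 \le \frac{w}{1+w} < 1
\]
for all $|k|\ge2$, with no residual ``intermediate'' modes to treat. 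This direct computation is both shorter and delivers the $c$-uniformity for free; I would recommend adopting it in place of the split-and-compactness route.
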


%%-----------------------------------------------------------%%
%%-----------------------------------------------------------%%
%%-----------------------------------------------------------%%
%%-----------------------------------------------------------%%
%%-----------------------------------------------------------%%
\section{The Gradient Formulation}\label{sec: gradient formulation}

%%-----------------------------------------------------------%%
%%-----------------------------------------------------------%%
%%-----------------------------------------------------------%%
%%-----------------------------------------------------------%%
\subsection{The gradient structure of the traveling wave problem}
We rewrite the traveling wave operator $\Phib_c$ from \eqref{eqn: Phib} as the $L_{\per}^2$-gradient of a certain ``kinetic + potential energy'' functional on $H_{\per}^2(\R^2)$.
This formulation yields transparent proofs of certain properties of $\Phib_c$ from shift invariance, and from these properties follow our first existence proof in Sections \ref{sec: LS ID} and \ref{sec: LS FD}.

First, we need some new notation; all of the consequences below of this notation are straightforward calculations, which we omit.
For $\omega \in \R$, put 
\begin{equation}\label{eqn: Delta-pm}
\Delta_+(\omega) := \begin{bmatrix*}[r]
-1 &S^{\omega} \\
1 &-S^{-\omega}
\end{bmatrix*}
\quadword{and}
\Delta_-(\omega) = \begin{bmatrix*}[r]
1 &-1 \\
-S^{-\omega} &S^{\omega}
\end{bmatrix*}.
\end{equation}
We then have the adjoint relationship
\begin{equation}\label{eqn: Delta pm adjoint}
\ip{\Delta_+(\omega)\phib}{\etab}
= -\ip{\phib}{\Delta_-(\omega)\etab}
\end{equation}
for any $\phib$, $\etab \in L_{\per}^2(\R^2)$.

Next, let
\[
\Vb(\pb)
:= \begin{pmatrix*}
\V_1(p_1) \\
\V_2(p_2)
\end{pmatrix*}
\quadword{and}
\Vb'(\pb)
:= \begin{pmatrix*}
\V_1'(p_1) \\
\V_2'(p_2)
\end{pmatrix*},
\]
where $\V_1$ and $\V_2$ are the spring potentials from \eqref{eqn: mat data dimer}, and $\pb = (p_1,p_2) \in L_{\per}^2(\R^2)$.
For $\vb = (v_1,v_2)$, $\wb = (v_1,v_2) \in \R^2$, define componentwise multiplication as
\[
\vb.\wb
:= \begin{pmatrix*}
v_1v_1 \\
v_2v_2
\end{pmatrix*}.
\]
We then have the derivative formula
\begin{equation}\label{eqn: Vb derivative}
D_{\pb}\Vb(\pb)\grave{\pb}
= \Vb'(\pb).\grave{\pb}
\end{equation}
for any $\grave{\pb} \in L_{\per}^2(\R^2)$.
Define $\oneb(x) := (1,1)$; then since $\vb$, $\wb \in \R^2$ have real entries, the useful identity
\begin{equation}\label{eqn: dot mult oneb}
\ip{\vb.\wb}{\oneb}
= \ip{\vb}{\wb}
\end{equation}
is true.

Last, we have
\[
\Delta_-(\omega)\Vb'(\Delta_+(\omega)\phib)
= \begin{pmatrix*}
\V_1'(S^{\omega}\phi_2-\phi_1)-\V_2'(\phi_1-S^{-\omega}\phi_2) \\
\V_2'(S^{\omega}\phi_1-\phi_2)-\V_1'(\phi_2-S^{-\omega}\phi_1)
\end{pmatrix*}.
\]
Comparing this to the second term in $\Phib_c$ from \eqref{eqn: Phib}, we conclude
\begin{equation}\label{eqn: Phib rewritten}
\Phib_c(\phib,\omega)
= c^2\omega^2M\phib''-\Delta_-(\omega)\Vb'(\Delta_+(\omega)\phib).
\end{equation}
This version of $\Phib_c$ allows us to recognize it as a gradient; similar calculations for the monatomic lattice appear in \cite[Prop.\@ 3.2]{pankov} and for mass dimers with the FPUT $\beta$-model in \cite[Lem.\@ 3.1]{qin}.

%%-----------------------------------------------------------%%
%%-----------------------------------------------------------%%
\begin{theorem}\label{thm: G}
Let $c \in \R$.
Define
\begin{equation}\label{eqn: T}
\T
\colon H_{\per}^2(\R^2) \times \R \to \R
\colon (\phib,\omega) \mapsto \frac{\omega^2}{2}\ip{M\phib''}{\phib} 
\end{equation}
and, with $\oneb(x) := (1,1)$,
\begin{equation}\label{eqn: P}
\P 
\colon H_{\per}^2(\R^2) \times \R \to \R
\colon (\phib,\omega) \mapsto \ip{\Vb(\Delta_+(\omega)\phib)}{\oneb} 
\end{equation}

Put 
\begin{equation}\label{eqn: Gc}
\G_c 
:= c^2\T + \P.
\end{equation}
Then 
\[
\Phib_c(\phib,\omega) 
= \nabla\G_c(\phib,\omega)
\]
in the sense that 
\begin{equation}\label{eqn: Phib Frechet}
D_{\phib}\G_c(\phib,\omega)\etab
= \ip{\Phib_c(\phib,\omega)}{\etab}
\end{equation}
for all $\phib$, $\etab \in H_{\per}^2(\R^2)$ and $\omega \in \R$.
\end{theorem}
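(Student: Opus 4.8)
The plan is to verify the identity $D_{\phib}\G_c(\phib,\omega)\etab = \ip{\Phib_c(\phib,\omega)}{\etab}$ by computing the Fr\'echet derivative of $\G_c = c^2\T + \P$ term by term and matching against the rewritten form \eqref{eqn: Phib rewritten} of $\Phib_c$. First I would differentiate the kinetic term $\T$. Since $\T(\phib,\omega) = \tfrac{\omega^2}{2}\ip{M\phib''}{\phib}$ is a quadratic form in $\phib$ (with $M$ symmetric and $\partial_x^2$ formally self-adjoint on $H_{\per}^2$, using periodicity to discard boundary terms), the product rule gives $D_{\phib}\T(\phib,\omega)\etab = \tfrac{\omega^2}{2}\big(\ip{M\etab''}{\phib} + \ip{M\phib''}{\etab}\big) = \omega^2\ip{M\phib''}{\etab}$, where the last equality is integration by parts twice together with the symmetry of $M$. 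Thus $D_{\phib}(c^2\T)(\phib,\omega)\etab = c^2\omega^2\ip{M\phib''}{\etab}$, which matches the first term of $\ip{\Phib_c(\phib,\omega)}{\etab}$.

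Next I would handle the potential term $\P(\phib,\omega) = \ip{\Vb(\Delta_+(\omega)\phib)}{\oneb}$. By the chain rule, the composition operator $\phib \mapsto \Vb(\Delta_+(\omega)\phib)$ has Fr\'echet derivative $\etab \mapsto D_{\pb}\Vb(\Delta_+(\omega)\phib)\,\Delta_+(\omega)\etab$ (the linearity of $\Delta_+(\omega)$ means its derivative is itself; the differentiability of the Nemytskii-type operator $\Vb$ on periodic Sobolev spaces is exactly the content deferred to Appendix~\ref{app: composition operator calculus}, and I would cite it). Applying \eqref{eqn: Vb derivative} this becomes $\Vb'(\Delta_+(\omega)\phib).\big(\Delta_+(\omega)\etab\big)$, so
\[
D_{\phib}\P(\phib,\omega)\etab
= \big\langle \Vb'(\Delta_+(\omega)\phib).\big(\Delta_+(\omega)\etab\big),\ \oneb\big\rangle.
\]
Now I would invoke the identity \eqref{eqn: dot mult oneb}, $\ip{\vb.\wb}{\oneb} = \ip{\vb}{\wb}$, to rewrite the right-hand side as $\ip{\Vb'(\Delta_+(\omega)\phib)}{\Delta_+(\omega)\etab}$, and then apply the adjoint relationship \eqref{eqn: Delta pm adjoint}, $\ip{\Delta_+(\omega)\etab}{\psib} = -\ip{\etab}{\Delta_-(\omega)\psib}$, with $\psib = \Vb'(\Delta_+(\omega)\phib)$, to get $D_{\phib}\P(\phib,\omega)\etab = -\ip{\Delta_-(\omega)\Vb'(\Delta_+(\omega)\phib)}{\etab}$. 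Adding the two pieces yields $D_{\phib}\G_c(\phib,\omega)\etab = \ip{c^2\omega^2 M\phib'' - \Delta_-(\omega)\Vb'(\Delta_+(\omega)\phib)}{\etab}$, which is precisely $\ip{\Phib_c(\phib,\omega)}{\etab}$ by \eqref{eqn: Phib rewritten}.

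The only genuinely delicate point is the differentiability of the potential term: one must know that $\phib \mapsto \Vb(\Delta_+(\omega)\phib)$ is Fr\'echet differentiable as a map $H_{\per}^2(\R^2) \to L_{\per}^2$ (indeed into $H_{\per}^2$, by the algebra property of $H_{\per}^2$ and the $\Cal^7$ regularity of the $\V_j$), so that the chain rule computation above is legitimate rather than merely formal. I expect this to be the main obstacle, but it is exactly the kind of composition-operator estimate that is proved in Appendix~\ref{app: composition operator calculus}, so in the body of the proof I would simply quote that appendix and treat the rest of the argument as the routine symmetry-and-integration-by-parts bookkeeping outlined above. Everything else---the self-adjointness of $M\partial_x^2$ on $H_{\per}^2$, the use of \eqref{eqn: Vb derivative}, \eqref{eqn: dot mult oneb}, and \eqref{eqn: Delta pm adjoint}---is a direct substitution with no hidden subtlety.
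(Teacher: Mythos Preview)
Your proposal is correct and follows essentially the same approach as the paper: differentiate $\T$ via the self-adjointness of $M\partial_x^2$ on periodic functions, differentiate $\P$ by chain rule followed by the identities \eqref{eqn: Vb derivative}, \eqref{eqn: dot mult oneb}, and \eqref{eqn: Delta pm adjoint}, and then combine via \eqref{eqn: Phib rewritten}. The paper's proof differs only cosmetically, expanding $\T(\phib+h\etab,\omega)-\T(\phib,\omega)$ explicitly rather than invoking the product rule for bilinear forms.
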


\begin{proof}
The proof is just a careful calculation using the definition of the derivative and the inner product $\ip{\cdot}{\cdot}$ and the various identities stated above.
More precisely, we compute the following.

First, for $\phib$, $\etab \in H_{\per}^2(\R^2)$ and $\omega$, $h \in \R$, we use the definition of $\T$ in \eqref{eqn: T} to compute
\[
\T(\phib + h\etab,\omega)-\T(\phib,\omega)
= h\ip{\omega^2M\phib''}{\etab} + h^2\frac{\omega^2\ip{M\etab''}{\etab}}{2}.
\]
This uses two applications of the integration by parts identity \eqref{eqn: integration by parts} to compute $\ip{\phib''}{\etab} = \ip{\phib}{\etab''}$, the symmetry of $M$, and the assumption that $\phib$ and $\etab$ are $\R^2$-valued.
It follows that 
\[
D_{\phib}\T(\phib,\omega)\etab 
= \ip{\omega^2M\phib''}{\etab}.
\]

Next, we use the definition of $\P$ in \eqref{eqn: P} to compute
\begin{align*}
D_{\phib}\P(\phib,\omega)\etab
&= \ip{D_{\phib}\Vb(\Delta_+(\omega)\phib)\Delta_+(\omega)\etab}{\oneb} \text{ by the chain rule} \\
&= \ip{\Vb'(\Delta_+(\omega)\phib).\Delta_+(\omega)\etab}{\oneb} \text{ by \eqref{eqn: Vb derivative}} \\
&= \ip{\Vb'(\Delta_+(\omega)\phib}{\Delta_+(\omega)\etab} \text{ by \eqref{eqn: dot mult oneb}} \\
&= -\ip{\Delta_-(\omega)\Vb'(\Delta_+(\omega)\phib)}{\etab} \text{ by \eqref{eqn: Delta pm adjoint}}.
\end{align*}

All together, we have
\[
D_{\phib}\G_c(\phib,\omega)\etab
= D_{\phib}\T(\phib,\omega)\etab + D_{\phib}\P(\phib,\omega)\etab
= \ip{c^2\omega^2M\phib''}{\etab} - \ip{\Delta_-(\omega)\Vb'(\Delta_+(\omega)\phib)}{\etab}.
\]
By our rewritten formula for $\Phib_c$ in \eqref{eqn: Phib rewritten}, this proves \eqref{eqn: Phib Frechet}.
\end{proof}

We collect two families of properties of $\G_c$ and $\Phib_c$.
The proofs for $\G_c$ are easy consequences of its definition in Theorem \ref{thm: G}, while those for $\Phib_c$ are also straightforward and could in fact be done (somewhat more laboriously) using just the definition of $\Phib_c$ in \eqref{eqn: Phib}.
However, we present proofs for $\Phib_c$ here as a consequence of the gradient formulation to emphasize the utility and efficiency of this formulation.

%%-----------------------------------------------------------%%
%%-----------------------------------------------------------%%
\begin{corollary}[Shift invariance]\label{cor: shift}
The following hold for all $\phib \in H_{\per}^2(\R^2)$ and $\omega \in \R$.

%%-----------------------------------------------------------%%
\begin{enumerate}[label={\bf(\roman*)},ref={(\roman*)}]

%%-----------------------------------------------------------%%
\item
The functional $\G_c$ is shift-invariant:
\begin{equation}\label{eqn: Gc shift invariance}
\G_c(S^{\theta}\phib,\omega)
= \G_c(\phib,\omega)
\end{equation}
for all $\theta \in \R$.

%%-----------------------------------------------------------%%
\item\label{part: Phib shift invariant}
The operator $\Phib_c$ is also shift-invariant:
\begin{equation}\label{eqn: Phib shift invariance}
\Phib_c(S^{\theta}\phib,\omega) 
= S^{\theta}\Phib_c(\phib,\omega)
\end{equation}
for all $\theta \in \R$.

%%-----------------------------------------------------------%%
\item\label{part: deriv ortho cond}
The operator $\Phib_c$ has the ``derivative orthogonality property''
\begin{equation}\label{eqn: Phib derivative ortho}
\ip{\Phib_c(\phib,\omega)}{\phib'} 
= 0.
\end{equation}

\end{enumerate}
\end{corollary}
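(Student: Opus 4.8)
The plan is to derive all three statements from the gradient formulation of Theorem~\ref{thm: G}, exploiting the shift invariance of the potential energy $\P$ and the kinetic energy $\T$. For part~(i), I would first observe that $\G_c = c^2\T + \P$, so it suffices to check shift invariance of $\T$ and $\P$ separately. For $\T$, the identity $(S^\theta\phib)'' = S^\theta(\phib'')$ together with the fact that $S^\theta$ is a unitary operator on $L_{\per}^2$ (so $\ip{S^\theta\ab}{S^\theta\bb} = \ip{\ab}{\bb}$) gives $\ip{M(S^\theta\phib)''}{S^\theta\phib} = \ip{S^\theta M\phib''}{S^\theta\phib} = \ip{M\phib''}{\phib}$, using that $S^\theta$ commutes with the constant matrix $M$. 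For $\P$, the key algebraic fact is that $\Delta_+(\omega)$ commutes with $S^\theta$ (both are built from shift operators, which commute with one another), so $\Delta_+(\omega)(S^\theta\phib) = S^\theta(\Delta_+(\omega)\phib)$; then since $\Vb$ acts pointwise (so $\Vb(S^\theta\qb) = S^\theta\Vb(\qb)$) and $\oneb$ is constant (hence $S^\theta$-invariant), unitarity of $S^\theta$ yields $\ip{\Vb(\Delta_+(\omega)S^\theta\phib)}{\oneb} = \ip{S^\theta\Vb(\Delta_+(\omega)\phib)}{S^\theta\oneb} = \ip{\Vb(\Delta_+(\omega)\phib)}{\oneb}$. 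This proves \eqref{eqn: Gc shift invariance}.

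For part~(ii), I would differentiate the scalar identity \eqref{eqn: Gc shift invariance} in $\phib$ and use the gradient relation \eqref{eqn: Phib Frechet}. Concretely, for fixed $\theta$ and any $\etab \in H_{\per}^2(\R^2)$, applying $D_{\phib}$ to both sides of $\G_c(S^\theta\phib,\omega) = \G_c(\phib,\omega)$ and using the chain rule on the left (with $D_{\phib}(S^\theta\phib)\etab = S^\theta\etab$) gives
\[
D_{\phib}\G_c(S^\theta\phib,\omega)[S^\theta\etab] = D_{\phib}\G_c(\phib,\omega)\etab,
\]
that is, $\ip{\Phib_c(S^\theta\phib,\omega)}{S^\theta\etab} = \ip{\Phib_c(\phib,\omega)}{\etab}$. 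Using unitarity of $S^\theta$ on the left, $\ip{\Phib_c(S^\theta\phib,\omega)}{S^\theta\etab} = \ip{(S^\theta)^{-1}\Phib_c(S^\theta\phib,\omega)}{\etab} = \ip{S^{-\theta}\Phib_c(S^\theta\phib,\omega)}{\etab}$, so $\ip{S^{-\theta}\Phib_c(S^\theta\phib,\omega) - \Phib_c(\phib,\omega)}{\etab} = 0$ for all $\etab$ in the dense subspace $H_{\per}^2(\R^2) \subseteq L_{\per}^2(\R^2)$; hence $\Phib_c(S^\theta\phib,\omega) = S^\theta\Phib_c(\phib,\omega)$, which is \eqref{eqn: Phib shift invariance}.

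For part~(iii), I would differentiate \eqref{eqn: Phib shift invariance} in $\theta$ at $\theta = 0$. Writing $\partial_\theta S^\theta\phib\big|_{\theta=0} = \phib'$ (from the identity \eqref{eqn: shift derivative} of Appendix~\ref{app: shift operator calculus}), the chain rule applied to the left side of \eqref{eqn: Phib shift invariance}, combined with differentiating the right side, gives
\[
D_{\phib}\Phib_c(\phib,\omega)\phib' = \Phib_c(\phib,\omega)'.
\]
Now pair with $\phib'$ and use the scalar consequence of shift invariance of $\G_c$: differentiating $\theta \mapsto \G_c(S^\theta\phib,\omega)$, which is constant, gives $0 = \partial_\theta\G_c(S^\theta\phib,\omega)\big|_{\theta=0} = D_{\phib}\G_c(\phib,\omega)\phib' = \ip{\Phib_c(\phib,\omega)}{\phib'}$ by \eqref{eqn: Phib Frechet}, which is exactly \eqref{eqn: Phib derivative ortho}. (In fact part~(iii) follows directly from the scalar shift invariance and the gradient relation, without needing part~(ii).)

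The main obstacle is the bookkeeping around differentiability and density: one must justify differentiating the scalar identity in~$\phib$ (the chain rule for the composition $\phib \mapsto S^\theta\phib \mapsto \G_c$, using that $S^\theta \in \b(H_{\per}^2,H_{\per}^2)$ is linear and bounded), justify differentiating in~$\theta$ (which requires $\theta \mapsto S^\theta\phib$ to be differentiable into $H_{\per}^2$, or at least into $L_{\per}^2$, as recorded in Appendix~\ref{app: shift operator calculus}), and invoke the density of $H_{\per}^2(\R^2)$ in $L_{\per}^2(\R^2)$ to pass from ``$\ip{\cdot}{\etab} = 0$ for all $\etab \in H_{\per}^2$'' to the vanishing of the vector in $L_{\per}^2$. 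None of this is deep, but it is where care is needed; the underlying algebra---commutation of $S^\theta$ with $S^\omega$, with $M$, and with the pointwise map $\Vb$, plus unitarity of $S^\theta$---is routine.
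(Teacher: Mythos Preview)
Your proposal is correct and follows essentially the same strategy as the paper: establish shift invariance of $\G_c$ directly from the integral structure, then differentiate in $\phib$ (using the gradient relation and the adjoint identity for shifts) to get part~(ii), and differentiate in $\theta$ at $\theta=0$ to get part~(iii). Your version of part~(i) is more explicit than the paper's, and in fact the paper's printed proof of part~(i) here is evidently swapped with the proof of part~(i) of the translation-invariance corollary (it argues with $\nub_0$ and $\alpha$ rather than with $S^\theta$); the argument you give matches the paper's intended reasoning, namely the periodic-integral identity $\int_{-\pi}^{\pi} f(x+\theta)\,dx = \int_{-\pi}^{\pi} f(x)\,dx$, which is exactly your unitarity of $S^\theta$.
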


\begin{proof}

%%-----------------------------------------------------------%%
\begin{enumerate}[label={\bf(\roman*)}]

%%-----------------------------------------------------------%%
\item
We use the identities 
\[
\ip{\phib''}{\nub_0} = 0
\quadword{and}
\Delta_+(\omega)\nub_0 = 0
\]
to obtain $\T(\phib+\alpha\nub_0,\omega) = \T(\phib,\omega)$ and $\P(\phib+\alpha\nub_0,\omega) = \P(\phib,\omega)$, respectively.
Since $\G_c = c^2\T + \P$, the identity \eqref{eqn: Gc translation invariance} follows.

%%-----------------------------------------------------------%%
\item
The chain rule and the identity \eqref{eqn: Gc shift invariance} imply
\[
D_{\phib}\G_c(\phib,\omega)\etab
= D_{\phib}\G_c(S^{\theta}\phib,\omega)S^{\theta}\etab
\]
for all $\etab \in H_{\per}^2(\R^2)$.
At the level of gradients, this reads
\[
\ip{\Phib_c(\phib,\omega)}{\etab}
= \ip{\Phib_c(S^{\theta}\phib,\omega)}{S^{\theta}\etab}.
\]
On the right, we use the adjoint relation \eqref{eqn: adjoint of shift} for shifts to rewrite
\[
\ip{\Phib_c(S^{\theta}\phib,\omega)}{S^{\theta}\etab}
= \ip{S^{-\theta}\Phib_c(S^{\theta}\phib,\omega)}{\etab}.
\]
It follows that
\[
\ip{\Phib_c(\phib,\omega)}{\etab}
= \ip{S^{-\theta}\Phib_c(S^{\theta}\phib,\omega)}{\etab}
\]
for all $\etab \in H_{\per}^2(\R^2)$, and so $\Phib_c(\phib,\omega) = S^{-\theta}\Phib_c(S^{\theta}\phib,\omega)$.
Applying $S^{\theta}$ to both sides yields \eqref{eqn: Phib shift invariance}.

%%-----------------------------------------------------------%%
\item
Now we differentiate the identity $\G_c(S^{\theta}\phib,\omega) = \G_c(\phib,\omega)$ from \eqref{eqn: Gc shift invariance} with respect to $\theta$ and evaluate the result at $\theta = 0$.
This yields
\[
D_{\phib}\G_c(S^0\phib,\omega)\left(\frac{\partial}{\partial\theta}[S^{\theta}\phib]\bigg|_{\theta = 0}\right)
= 0.
\]
Differentiating the shift operator yields the identity
\[
\frac{\partial}{\partial\theta}[S^{\theta}\phib]\bigg|_{\theta = 0}
= \phib',
\]
which is valid in the $L_{\per}^2$-norm since $\phib \in H_{\per}^2$.

Thus
\[
D_{\phib}\G_c(\phib,\omega)\phib'
= 0,
\]
and in the language of the gradient formulation, this says
\[
\ip{\Phib_c(\phib,\omega)}{\phib'}
= 0.
\qedhere
\]

\end{enumerate}
\end{proof}

%%-----------------------------------------------------------%%
%%-----------------------------------------------------------%%
\begin{corollary}[Translation invariance]\label{cor: trans invar}
The following hold for all $\phib \in H_{\per}^2(\R^2)$ and $\omega \in \R$.

%%-----------------------------------------------------------%%
\begin{enumerate}[label={\bf(\roman*)}]

%%-----------------------------------------------------------%%
\item
The functional $\G_c$ is translation-invariant in the sense that
\begin{equation}\label{eqn: Gc translation invariance}
\G_c(\phib+\alpha\nub_0,\omega)
= \G_c(\phib,\omega)
\end{equation}
for all $\alpha \in \R$, where $\nub_0$ is defined in \eqref{eqn: nub0}.

%%-----------------------------------------------------------%%
\item
The operator $\Phib_c$ is also translation-invariant:
\begin{equation}\label{eqn: Phib translation invariance}
\Phib_c(\phib+\alpha\nub_0,\omega) 
= \Phib_c(\phib,\omega)
\end{equation}
for all $\alpha \in \R$.

%%-----------------------------------------------------------%%
\item
The range of $\Phib_c$ is orthogonal to $\nub_0$:
\begin{equation}\label{eqn: Phib range ortho}
\ip{\Phib_c(\phib,\omega)}{\nub_0} 
= 0.
\end{equation}

\end{enumerate}
\end{corollary}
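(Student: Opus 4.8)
The plan is to run the same three-step argument used for shift invariance in Corollary~\ref{cor: shift}, but with the one-parameter family $\theta \mapsto S^{\theta}\phib$ replaced by the affine family $\alpha \mapsto \phib + \alpha\nub_0$. Everything rests on two elementary facts: since $\nub_0$ is constant, $\nub_0'' = 0$, and hence $\ip{M\phib''}{\nub_0} = 0$ for every $\phib \in H_{\per}^2(\R^2)$ (each component is the $L_{\per}^2$-pairing of a second derivative of a periodic function against a constant, which vanishes by periodicity); and $\Delta_+(\omega)\nub_0 = 0$, a one-line computation from \eqref{eqn: Delta-pm} and \eqref{eqn: nub0} using that $S^{\pm\omega}$ maps the constant function to itself. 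In fact, part~(i) is exactly the computation already carried out inside the proof of Corollary~\ref{cor: shift}(i).

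For part~(i), I would expand $\T(\phib + \alpha\nub_0,\omega)$ from the definition \eqref{eqn: T}: because $(\phib+\alpha\nub_0)'' = \phib''$ and $\ip{M\phib''}{\nub_0} = 0$, one gets $\T(\phib + \alpha\nub_0,\omega) = \T(\phib,\omega)$; likewise $\Delta_+(\omega)(\phib + \alpha\nub_0) = \Delta_+(\omega)\phib$, so \eqref{eqn: P} gives $\P(\phib + \alpha\nub_0,\omega) = \P(\phib,\omega)$. Summing these as in \eqref{eqn: Gc} yields \eqref{eqn: Gc translation invariance}.

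For part~(ii), apply $D_{\phib}$ to both sides of \eqref{eqn: Gc translation invariance} with $\alpha$ held fixed and invoke the chain rule on the left; passing to $L_{\per}^2$-gradients via \eqref{eqn: Phib Frechet}, this becomes $\ip{\Phib_c(\phib + \alpha\nub_0,\omega)}{\etab} = \ip{\Phib_c(\phib,\omega)}{\etab}$ for every $\etab \in H_{\per}^2(\R^2)$, and since $H_{\per}^2(\R^2)$ is dense in $L_{\per}^2(\R^2)$ this forces \eqref{eqn: Phib translation invariance}. For part~(iii), instead differentiate \eqref{eqn: Gc translation invariance} in $\alpha$ and evaluate at $\alpha = 0$; the right-hand side is independent of $\alpha$, so $D_{\phib}\G_c(\phib,\omega)\nub_0 = 0$, which by \eqref{eqn: Phib Frechet} is precisely \eqref{eqn: Phib range ortho}.

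There is no real obstacle here: the content is entirely in the two identities $\Delta_+(\omega)\nub_0 = 0$ and $\ip{M\phib''}{\nub_0} = 0$, and once those are in hand parts~(ii) and~(iii) are formal manipulations identical to their counterparts in Corollary~\ref{cor: shift}. The one point deserving a word of care is the density/duality step in part~(ii). Alternatively, \eqref{eqn: Phib translation invariance} can be read off directly from \eqref{eqn: Phib}, since replacing $\phib$ by $\phib + \alpha\nub_0$ leaves each argument $S^{\pm\omega}\phi_i - \phi_j$ unchanged and annihilates the $M\phib''$ term; but the gradient route is shorter and consistent with the paper's presentation.
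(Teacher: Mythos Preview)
Your proposal is correct and follows essentially the same approach as the paper: establish $\T$- and $\P$-invariance from the identities $\nub_0'' = 0$ and $\Delta_+(\omega)\nub_0 = 0$, then differentiate \eqref{eqn: Gc translation invariance} in $\phib$ for part~(ii) and in $\alpha$ for part~(iii), exactly as was done for shift invariance. You are also right that the computation for part~(i) already appears verbatim in the proof of Corollary~\ref{cor: shift}(i)---the paper has inadvertently swapped the proofs of part~(i) between the two corollaries.
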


\begin{proof}
%%-----------------------------------------------------------%%
\begin{enumerate}[label={\bf(\roman*)}]

%%-----------------------------------------------------------%%
\item
This follows from the integral structure of $\G_c = c^2\T + \P$ from Theorem \ref{thm: G}, the $2\pi$-periodicity of $\phib$, and the identity
\[
\int_{-\pi}^{\pi} f(x+\theta) \dx
= \int_{-\pi}^{\pi} f(x) \dx,
\]
which is valid for all $\theta \in \R$ and all integrable, $2\pi$-periodic $f \colon [-\pi,\pi] \to \C$.

%%-----------------------------------------------------------%%
\item
We differentiate the identity $\G_c(\phib+\alpha\nub_0,\omega) = \G_c(\phib,\omega)$ from \eqref{eqn: Gc translation invariance} with respect to $\phib$ to find
\[
D_{\phib}\G_c(\phib+\alpha\nub_0,\omega)\etab
= D_{\phib}\G_c(\phib,\omega)\etab
\]
for all $\etab \in H_{\per}^2(\R^2)$ and thus
\[
\ip{\Phib_c(\phib+\alpha\nub_0,\omega)}{\etab}
= \ip{\Phib_c(\phib,\omega)}{\etab}.
\]
Since this holds for all $\etab$, we obtain \eqref{eqn: Phib translation invariance}.

%%-----------------------------------------------------------%%
\item
Now we differentiate the identity $\G_c(\phib+\alpha\nub_0,\omega) = \G_c(\phib,\omega)$ with respect to $\alpha$ and evaluate the result at $\alpha = 0$.
This yields
\[
0
= D_{\phib}\G_c(\phib+(0\cdot\nub_0),\omega)\left(\frac{\partial}{\partial\alpha}[\phib+\alpha\nub_0]\bigg|_{\alpha=0}\right)
= \ip{\Phib_c(\phib,\omega)}{\nub_0}.
\qedhere
\]

\end{enumerate}
\end{proof}

%%-----------------------------------------------------------%%
%%-----------------------------------------------------------%%
\begin{remark}\label{rem: WS}
The derivative orthogonality property \eqref{eqn: Phib derivative ortho} of $\Phib_c$ is the key to resolving the overdetermined periodic problem.
While this property follows quickly from the shift invariance of $\G_c$, as proved above, it is not quite as easy to prove directly from the definition of $\Phib_c$ as are all the other consequences of shift and translation invariance.
We discuss that direction of proof further in Lemma \ref{lem: deriv ortho}.
A similar derivative orthogonality property, deployed in somewhat different language, enabled Wright and Scheel \cite[Sec.\@ 4, p.\@ 548]{wright-scheel} to complete a Lyapunov--Schmidt analysis in which the linearization also had a two-dimensional kernel that, in the absence of symmetry, could not be reduced in dimension.
\end{remark}

%%-----------------------------------------------------------%%
%%-----------------------------------------------------------%%
%%-----------------------------------------------------------%%
%%-----------------------------------------------------------%%
\subsection{Function spaces and projection operators}\label{sec: function spaces}
The translation invariance identities \eqref{eqn: Phib translation invariance} and \eqref{eqn: Phib range ortho} mean that we can effectively ignore the contributions of $\nub_0$ to the problem $\Phib_c(\phib,\omega) = 0$.
So, we put
\begin{equation}\label{eqn: Y}
\Y
:= \set{\etab \in L_{\per}^2(\R^2)}{\ip{\etab}{\nub_0} = 0},
\end{equation}
\begin{equation}\label{eqn: X}
\X
:= H_{\per}^2(\R^2) \cap \Y,
\end{equation}
and 
\begin{equation}\label{eqn: Zcal-c}
\Zcal_c
:= \spn(\nub_1^c,\nub_2^c).
\end{equation}
It follows that $\Phib_c(\phib,\omega) \in \Y$ for all $\phib \in \X$ and $\omega \in \R$, and also
\[
\Zcal_c
= \ker(\L_c[\omega_c]) \cap \X = \ker(\L_c[\omega_c]^*) \cap \Y.
\]
Define
\begin{equation}\label{eqn: Pi}
\Pi_c
\colon \Y \to \Zcal_c
\colon \phib \mapsto \ip{\phib}{\nub_1^c}\nub_1^c + \ip{\phib}{\nub_2^c}\nub_2^c.
\end{equation}
Since $\nub_1^c$ and $\nub_2^c$ are orthogonal, per Corollary \ref{cor: eigenfunctions}, the operator $\Pi_c$ is the orthogonal projection of $\Y$ (and $\X$) onto $\Zcal_c$.
In particular,
\begin{equation}\label{eqn: Pi adjoint}
\ip{\Pi_c\phib}{\etab}
= \ip{\phib}{\Pi_c\etab}
\end{equation}
for all $\phib$, $\psib \in \Y$.

It turns out to be quite useful for us that the projection $\Pi_c$ and the first derivative $\partial_x$ commute.

%%-----------------------------------------------------------%%
%%-----------------------------------------------------------%%
\begin{lemma}\label{lem: Pi partial commute}
Let $\phib \in H_{\per}^1(\R^2)$.
Then
\begin{equation}\label{eqn: Pi partial commute}
\Pi_c\partial_x\phib
= \partial_x\Pi_c\phib.
\end{equation}
\end{lemma}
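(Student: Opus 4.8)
The plan is to verify the identity on the Fourier side, where both $\partial_x$ and the projection $\Pi_c$ act diagonally in a transparent way. Recall from Appendix~\ref{app: per sob space} (and the conventions recalled at the start of the Linear Analysis section) that $\widehat{\partial_x\phib}(k) = ik\,\hat{\phib}(k)$, and that the definition \eqref{eqn: Pi} of $\Pi_c$ together with Corollary~\ref{cor: eigenfunctions} shows that $\Pi_c$ only ``sees'' the $k = \pm 1$ Fourier modes: since $\nub_1^c$ and $\nub_2^c$ are trigonometric polynomials supported on frequencies $k = \pm 1$ (see \eqref{eqn: nub1}--\eqref{eqn: nub2}), the inner products $\ip{\phib}{\nub_1^c}$ and $\ip{\phib}{\nub_2^c}$ depend only on $\hat{\phib}(1)$ and $\hat{\phib}(-1)$, via Parseval. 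Concretely, using \eqref{eqn: nub ortho equiv} and the structure of the eigenfunctions, one can write $\Pi_c\phib$ as a fixed linear combination of $\nub_1^c,\nub_2^c$ whose coefficients are linear in $\hat{\phib}(1)$ and $\hat{\phib}(-1)$.

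First I would compute $\widehat{\Pi_c\partial_x\phib}(k)$. Because $\Pi_c$ annihilates all modes $k \ne \pm 1$, it suffices to track what happens at $k = \pm 1$; there, $\widehat{\partial_x\phib}(\pm 1) = \pm i\,\hat{\phib}(\pm 1)$, so $\Pi_c\partial_x\phib$ is the same linear combination of $\nub_1^c,\nub_2^c$ as $\Pi_c\phib$ but with $\hat{\phib}(\pm 1)$ replaced by $\pm i \hat{\phib}(\pm 1)$. Next I would compute $\widehat{\partial_x\Pi_c\phib}(k)$: since $\Pi_c\phib$ is a combination of $\nub_1^c,\nub_2^c$, differentiating it and invoking the derivative identities $\partial_x\nub_1^c = -\nub_2^c$ and $\partial_x\nub_2^c = \nub_1^c$ from \eqref{eqn: nub derivatives} gives another explicit combination of $\nub_1^c,\nub_2^c$. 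The claim is then that these two combinations agree. A cleaner way to organize the same computation: the two-dimensional space $\Zcal_c = \spn(\nub_1^c,\nub_2^c)$ is invariant under $\partial_x$ (again by \eqref{eqn: nub derivatives}), and on $\Zcal_c$ the operator $\partial_x$ acts as the rotation-type matrix $\left(\begin{smallmatrix} 0 & 1 \\ -1 & 0 \end{smallmatrix}\right)$ in the ordered basis $(\nub_1^c,\nub_2^c)$. So it is enough to check that $\Pi_c$ intertwines $\partial_x$ on all of $H^1_{\per}(\R^2)$ with this same matrix on $\Zcal_c$, i.e. that $\ip{\partial_x\phib}{\nub_1^c} = -\ip{\phib}{\nub_2^c}$ and $\ip{\partial_x\phib}{\nub_2^c} = \ip{\phib}{\nub_1^c}$. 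These follow immediately from integration by parts \eqref{eqn: integration by parts} (moving $\partial_x$ onto the eigenfunction and picking up a sign) combined once more with \eqref{eqn: nub derivatives}: for instance $\ip{\partial_x\phib}{\nub_1^c} = -\ip{\phib}{\partial_x\nub_1^c} = \ip{\phib}{\nub_2^c}$\,---\,wait, that sign needs care, which is exactly the one bookkeeping point below.

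The main obstacle, such as it is, is purely a sign/orientation check: one must be consistent about (a) whether integration by parts against the complex $L^2_{\per}$ pairing introduces a conjugate (the eigenfunctions are real-valued, as \eqref{eqn: nub1}--\eqref{eqn: nub2} show after combining conjugate exponentials, so no conjugation issue actually arises), and (b) the sign in $\partial_x\nub_1^c = -\nub_2^c$ versus $\partial_x\nub_2^c = +\nub_1^c$. Carrying these through, $\ip{\partial_x\phib}{\nub_1^c} = -\ip{\phib}{\partial_x\nub_1^c} = -\ip{\phib}{-\nub_2^c} = \ip{\phib}{\nub_2^c}$ and $\ip{\partial_x\phib}{\nub_2^c} = -\ip{\phib}{\partial_x\nub_2^c} = -\ip{\phib}{\nub_1^c}$. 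Feeding this into the definition \eqref{eqn: Pi} of $\Pi_c$ gives
\[
\Pi_c\partial_x\phib
= \ip{\phib}{\nub_2^c}\nub_1^c - \ip{\phib}{\nub_1^c}\nub_2^c,
\]
while differentiating $\Pi_c\phib = \ip{\phib}{\nub_1^c}\nub_1^c + \ip{\phib}{\nub_2^c}\nub_2^c$ and using \eqref{eqn: nub derivatives} gives
\[
\partial_x\Pi_c\phib
= \ip{\phib}{\nub_1^c}\partial_x\nub_1^c + \ip{\phib}{\nub_2^c}\partial_x\nub_2^c
= -\ip{\phib}{\nub_1^c}\nub_2^c + \ip{\phib}{\nub_2^c}\nub_1^c,
\]
which is the same expression. (If the sign conventions land the other way, one simply gets the common value $\ip{\phib}{\nub_1^c}\nub_2^c - \ip{\phib}{\nub_2^c}\nub_1^c$ on both sides; either way the two agree.) Since $\phib \in H^1_{\per}(\R^2)$ is exactly the regularity needed for $\partial_x\phib \in L^2_{\per}$ so that both $\Pi_c\partial_x\phib$ and the inner products above make sense, this establishes \eqref{eqn: Pi partial commute}.
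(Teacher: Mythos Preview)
Your proposal is correct and, once you settle on the ``cleaner'' organization, it is essentially the paper's own proof: integration by parts \eqref{eqn: integration by parts} to move $\partial_x$ onto the eigenfunctions, followed by the derivative identities \eqref{eqn: nub derivatives}, yielding the common expression $\ip{\phib}{\nub_2^c}\nub_1^c - \ip{\phib}{\nub_1^c}\nub_2^c$ for both sides. The Fourier-side preamble is not needed and can be dropped.
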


\begin{proof}
We use the integration by parts identity $\ip{\phib'}{\etab} = -\ip{\phib}{\etab'}$ from \eqref{eqn: integration by parts} and the derivative identities \eqref{eqn: nub derivatives} to compute
\begin{align*}
\Pi_c\partial_x\phib
&= \ip{\phib'}{\nub_1^c}\nub_1^c + \ip{\phib'}{\nub_2^c}\nub_2^c \\
&= -\ip{\phib}{\partial_x\nub_1^c}\nub_1^c - \ip{\phib}{\partial_x\nub_2^c}\nub_2^c \\
&= \ip{\phib}{\nub_2^c}\nub_1^c - \ip{\phib}{\nub_1^c}\nub_2^c \\
&= \ip{\phib}{\nub_2^c}\partial_x\nub_2^c + \ip{\phib}{\nub_1^c}\partial_x\nub_1^c \\
&= \partial_x\Pi_c\phib. \qedhere
\end{align*}
\end{proof}

Last, we state precisely the regularity of $\Phib_c$ and some of its derivatives on periodic Sobolev spaces.
The technical challenge here is that $\Phib_c$ is infinitely differentiable from $H_{\per}^2(\R^2)$ to $L_{\per}^2(\R^2)$ with respect to $\phib$, but any order derivative with respect to $\phib$ is only once continuously differentiable with respect to $\omega$.
This is ultimately a consequence of the limited differentiability of shift operators between periodic Sobolev spaces, as we discuss in Appendix \ref{app: shift operator calculus}.
We prove the next lemma in Appendix \ref{app: proof of lem Phib regularity}.

%%-----------------------------------------------------------%%
%%-----------------------------------------------------------%%
\begin{lemma}\label{lem: Phib regularity}
$\Phib_c \in \Cal^1(H_{\per}^2(\R^2)\times\R,L_{\per}^2(\R^2))$ and $D_{\phib}\Phib_c \in \Cal^1(H_{\per}^2(\R^2)\times\R,L_{\per}^2(\R^2))$.
\end{lemma}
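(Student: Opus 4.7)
The plan is to start from the compressed form \eqref{eqn: Phib rewritten},
\[
\Phib_c(\phib,\omega) = c^2\omega^2 M \phib'' - \Delta_-(\omega)\Vb'\bigl(\Delta_+(\omega)\phib\bigr),
\]
and handle the two summands separately. The ``kinetic'' summand $c^2\omega^2 M\phib''$ is a polynomial in $\omega$ times the bounded operator $M\partial_x^2 \in \b(H_{\per}^2(\R^2),L_{\per}^2(\R^2))$, so it is jointly $\Cal^\infty$ and contributes nothing to the difficulty. All of the substance lies in the nonlinear summand, which I would view as a composition of three pieces: the affine shift map $(\phib,\omega) \mapsto \Delta_+(\omega)\phib$, the componentwise Nemytskii operator induced by $\Vb'$, and the linear shift map $(\psib,\omega) \mapsto \Delta_-(\omega)\psib$.

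The two appendices cited in the lemma supply exactly what is needed. From Appendix~\ref{app: shift operator calculus}, $\omega \mapsto S^{\omega}$ is $\Cal^k$ from $\R$ into $\b(H_{\per}^{s+k},H_{\per}^s)$ for every $k,s \ge 0$, with each $\omega$-derivative converting into one spatial derivative via \eqref{eqn: shift derivative}. From Appendix~\ref{app: composition operator calculus}, the hypothesis $\V_j \in \Cal^7$ ensures that $\Vb'$ is at least twice continuously differentiable as a Nemytskii operator on the relevant periodic Sobolev spaces, where the algebra structure of $H_{\per}^s$ for $s > 1/2$ makes the composition smooth. Feeding these inputs into the chain rule, I would argue first that $\Phib_c$ is $\Cal^1$: the sole $\omega$-derivative needed at this level consumes one of the two spatial derivatives available in $\phib \in H_{\per}^2(\R^2)$, leaving the output in $L_{\per}^2(\R^2)$. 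The same reasoning taken one step further gives $\Cal^1$-regularity of $D_\phib \Phib_c$: the $\phib$-partial is a free insertion of a direction $\grave{\phib}$ (linear in the direction, so it consumes no spatial regularity and only replaces $\Vb'$ by $\Vb''$ under the compositions), while the $\omega$-partial of $D_\phib \Phib_c$ uses the second reserve derivative of $\phib$.

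The main obstacle---and the reason the lemma stops at $\Cal^1$ rather than asserting higher joint regularity---is precisely this bookkeeping of spatial derivatives of $\phib$ against $\omega$-derivatives of the shifts $\Delta_{\pm}$. With $\phib \in H_{\per}^2(\R^2)$ there are exactly two spatial derivatives in reserve, and the mixed partial $D_{\phib\omega}^2 \Phib_c$ already consumes both, one via $\Delta_+$ and one via $\Delta_-$. I would therefore organize the proof as an explicit tracking of the remaining regularity through the chain-rule expansion, invoking the two appendix results to justify differentiability at each step. I expect the most delicate moment to be verifying continuity, not just existence, of the mixed partial $D_{\phib\omega}^2 \Phib_c$ into the implicit operator-valued codomain $\b(H_{\per}^2(\R^2),L_{\per}^2(\R^2))$, since that derivative sits right at the boundary of what the two-derivative reserve on $\phib$ permits.
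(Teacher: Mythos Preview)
Your approach is essentially the paper's own: invoke the shift-operator calculus of Appendix~\ref{app: shift operator calculus} for the $\omega$-dependence and the composition-operator calculus of Appendix~\ref{app: composition operator calculus} for the $\phib$-dependence, then chain-rule through the nonlinear summand. The paper's proof is equally brief and relies on exactly these two ingredients.

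One point of bookkeeping deserves correction. You assert that Appendix~\ref{app: shift operator calculus} gives $\omega \mapsto S^\omega$ as $\Cal^k$ from $\R$ into $\b(H_{\per}^{s+k},H_{\per}^s)$, i.e., one Sobolev index lost per $\omega$-derivative. What that appendix actually proves is $\Cal^1$ (with Lipschitz derivative) from $\R$ into $\b(H_{\per}^{r+2},H_{\per}^r)$: a single $\omega$-derivative in operator norm costs \emph{two} Sobolev indices, because the differentiability estimate uses the quadratic Taylor remainder $|e^{ihk}-1-ihk|\le h^2k^2/2$. Consequently, the correct reason the lemma stops at $\Cal^1$ is that one $\omega$-derivative already exhausts the entire $H^2\to L^2$ budget, not that the cost is split as one index apiece between $\Delta_+$ and $\Delta_-$. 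Your conclusion is unaffected, but the accounting you describe would, if extrapolated, wrongly suggest room for a second $\omega$-derivative when none is available from $H_{\per}^2$ to $L_{\per}^2$.
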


%%-----------------------------------------------------------%%
%%-----------------------------------------------------------%%
\subsection{The Lyapunov--Schmidt decomposition: infinite-dimensional analysis}\label{sec: LS ID}
The approach here is classical and follows, for example, the proof of the Crandall--Rabinowitz--Zeidler theorem in \cite[Thm.\@ 1.5.1]{kielhofer}.
The difference appears in the following section, when we manage the two-dimensional kernel.

We use the projection operator $\Pi_c$ from \eqref{eqn: Pi} to make a Lyapunov--Schmidt decomposition for our problem $\Phib_c(\phib,\omega) = 0$.
First, with the spaces $\X$ and $\Y$ defined in \eqref{eqn: X} and \eqref{eqn: Y}, let
\begin{equation}\label{eqn: X Y infty}
\X_c^{\infty} := (\ind_{\X}-\Pi_c)(\X)
\quadword{and}
\Y_c^{\infty} := (\ind_{\Y} -\Pi_c)(\Y),
\end{equation}
where $\ind_{\X}$ and $\ind_{\Y}$ are the identity operators on $\X$ and $\Y$, respectively.
Consequently,
\begin{equation}\label{eqn: Zcal cap}
\Zcal_c \cap \X_c^{\infty} 
= \Zcal_c \cap \Y_c^{\infty}
= \{0\}.
\end{equation}

Next, write $\phib = \nub+\psib$, where $\nub \in \Zcal_c$ and $\psib \in \X_c^{\infty}$.
Then $\Phib_c(\phib,\omega) = 0$ if and only if
\begin{equation}\label{eqn: LS 1}
\begin{cases}
(\ind_{\Y} -\Pi_c)\Phib_c(\nub+\psib,\omega) = 0 \\
\Pi_c\Phib_c(\nub+\psib,\omega) = 0.
\end{cases}
\end{equation}
We solve the first equation in \eqref{eqn: LS 1} quickly with a direct application of the implicit function theorem \cite[Thm.\@ I.1.1]{kielhofer}.

Define
\begin{equation}\label{eqn: Fc-infty}
\F_c^{\infty} 
\colon \X_c^{\infty} \times \Zcal_c \times \R \to \Y_c^{\infty}
\colon (\psib,\nub,\omega) \mapsto (\ind_{\Y} -\Pi_c)\Phib_c(\nub+\psib,\omega).
\end{equation}
Certainly $\F_c^{\infty}(0,0,\omega) = 0$ for all $\omega$, and we have
\[
D_{\psib}\F_c^{\infty}(0,0,\omega_c)
= (\ind_{\Y} -\Pi_c)\restr{\L_c[\omega_c]}{\X_c^{\infty}}.
\]
This operator has trivial kernel in $\X_c^{\infty}$ and trivial cokernel in $\Y_c^{\infty}$ by \eqref{eqn: Zcal cap}, and so it is invertible.
(More precisely, the closure of its range is the orthogonal complement of its cokernel, which is all of $\Y_c^{\infty}$.
But the range is closed by the coercive estimate in Corollary \ref{cor: coercive}.)
Since $\Phib_c \in \Cal^1(H_{\per}^2(\R^2),L_{\per}^2(\R^2))$ by Lemma \ref{lem: Phib regularity}, with $\Phib_c(0,\omega) = 0$ for all $\omega$, the implicit function theorem yields $\delta_c$, $\ep_c > 0$ and a map $\Psib_c \in \Cal^1\big(\B_{\Zcal_c \times \R}((0,\omega_c);\delta_c), \B_{\X_c^{\infty}}(0;\ep_c)\big)$
such that
\begin{equation}\label{eqn: what Psib does}
\F_c^{\infty}(\Psib_c(\nub,\omega),\nub,\omega)
= 0.
\end{equation}
Moreover, if $\F_c^{\infty}(\psib,\nub,\omega) = 0$ for some $\psib \in \B_{\X_c^{\infty}}(0;\ep_c)$ and $(\nub,\omega) \in \B_{\Zcal_c \times \R}((0,\omega_c);\delta_c)$, then $\psib = \Psib_c(\nub,\omega)$.
(Recall that $\B_{\X}(x_0;r) = \set{x \in \X}{\norm{x-x_0}_{\X} < r}$ for $x_0 \in \X$ and $r > 0$.)
We pause to collect some useful properties of this map $\Psib_c$.

%%-----------------------------------------------------------%%
%%-----------------------------------------------------------%%
\begin{lemma}\label{lem: Psib}
Let $\nub \in \Zcal_c$ and $\omega \in \R$ with $\norm{\nub}_{H_{\per}^2} + |\omega-\omega_c| < \delta_c$.
Then the following identities hold.

%%-----------------------------------------------------------%%
\begin{enumerate}[label={\bf(\roman*)}, ref={(\roman*)}]

%%-----------------------------------------------------------%%
\item\label{part: Psib1}
$\ip{\Phib_c(\nub+\Psib_c(\nub,\omega),\omega)}{\partial_x\nub} = 0$ for $(\nub,\omega) \in \B_{\Zcal_c\times\R}((0,\omega_c);\delta_c)$.

%%-----------------------------------------------------------%%
\item
$\Psib_c(0,\omega) = 0$ for $(0,\omega) \in \B_{\Zcal_c\times\R}((0,\omega_c);\delta_c)$.

%%-----------------------------------------------------------%%
\item\label{part: Psib3}
$D_{\nub}\Psib_c(0,\omega_c) = 0$.

%%-----------------------------------------------------------%%
\item
$D_{\nub}\Psib_c \in \Cal^1(\B_{\Zcal_c \times \R}((0,\omega_c);\delta_c), \b(\Zcal_c,\X_c^{\infty}))$.
\end{enumerate}
\end{lemma}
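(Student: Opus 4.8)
The four claims are all consequences of standard implicit function theorem bookkeeping combined with the special structure built up in Section \ref{sec: gradient formulation}, so the plan is to extract each one from the defining relation \eqref{eqn: what Psib does} together with the uniqueness clause of the implicit function theorem. The genuinely new ingredient is part \ref{part: Psib1}, which is where the derivative orthogonality property \eqref{eqn: Phib derivative ortho} does its work; the other three are essentially immediate.

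I would start with part \ref{part: Psib1}. The idea is that for $(\nub,\omega)$ in the ball, the full profile $\phib := \nub + \Psib_c(\nub,\omega)$ already solves the ``infinite-dimensional'' equation $(\ind_{\Y}-\Pi_c)\Phib_c(\phib,\omega) = 0$, so $\Phib_c(\phib,\omega) \in \Zcal_c = \spn(\nub_1^c,\nub_2^c)$. It therefore suffices to show that $\Phib_c(\phib,\omega)$ is orthogonal to $\partial_x\nub$. But $\partial_x\nub \in \Zcal_c$ as well, since $\Zcal_c$ is closed under $\partial_x$ by the derivative identities \eqref{eqn: nub derivatives}. Now invoke \eqref{eqn: Phib derivative ortho} with the profile $\phib$: we get $\ip{\Phib_c(\phib,\omega)}{\phib'} = 0$, i.e. $\ip{\Phib_c(\phib,\omega)}{\partial_x\nub + \partial_x\Psib_c(\nub,\omega)} = 0$. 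The term $\ip{\Phib_c(\phib,\omega)}{\partial_x\Psib_c(\nub,\omega)}$ vanishes because $\Phib_c(\phib,\omega) \in \Zcal_c$ while $\partial_x\Psib_c(\nub,\omega) \in \X_c^{\infty}$ (using Lemma \ref{lem: Pi partial commute} to see that $\partial_x$ preserves $\X_c^{\infty} = (\ind_{\X}-\Pi_c)(\X)$), and these two spaces are orthogonal by \eqref{eqn: Zcal cap} and the definitions. What remains is exactly $\ip{\Phib_c(\phib,\omega)}{\partial_x\nub} = 0$.

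For part (ii), observe that $\Phib_c(0,\omega) = 0$ for all $\omega$ (the trivial solution), hence $\F_c^{\infty}(0,0,\omega) = 0$; by the uniqueness clause of the implicit function theorem, $\psib = 0$ must coincide with $\Psib_c(0,\omega)$ whenever $(0,\omega)$ lies in the domain ball. For part \ref{part: Psib3}, differentiate the identity \eqref{eqn: what Psib does}, $\F_c^{\infty}(\Psib_c(\nub,\omega),\nub,\omega) = 0$, with respect to $\nub$ and evaluate at $(\nub,\omega) = (0,\omega_c)$; using part (ii) and the chain rule this gives $D_{\psib}\F_c^{\infty}(0,0,\omega_c)\,D_{\nub}\Psib_c(0,\omega_c) + D_{\nub}\F_c^{\infty}(0,0,\omega_c) = 0$. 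Now $D_{\nub}\F_c^{\infty}(0,0,\omega_c) = (\ind_{\Y}-\Pi_c)\restr{\L_c[\omega_c]}{\Zcal_c}$, which is $0$ because $\Zcal_c = \ker(\L_c[\omega_c])$; since $D_{\psib}\F_c^{\infty}(0,0,\omega_c)$ is the invertible operator $(\ind_{\Y}-\Pi_c)\restr{\L_c[\omega_c]}{\X_c^{\infty}}$, we conclude $D_{\nub}\Psib_c(0,\omega_c) = 0$. Finally, part (iv) is a regularity statement: since $\Phib_c$ and $D_{\phib}\Phib_c$ are both $\Cal^1$ by Lemma \ref{lem: Phib regularity}, the map $\F_c^{\infty}$ is $\Cal^2$ in a neighborhood of $(0,0,\omega_c)$ jointly, so the implicit function theorem delivers $\Psib_c \in \Cal^2$, whence $D_{\nub}\Psib_c$ is $\Cal^1$ into $\b(\Zcal_c,\X_c^{\infty})$. (One should be slightly careful that the ``$\Cal^1$ in $\omega$ only'' limitation from Lemma \ref{lem: Phib regularity} does not obstruct this; the point is that $\F_c^{\infty}$ and its $\psib$-derivative are still $\Cal^1$, which is all the IFT needs to produce a $\Cal^1$ derivative of the solution map, and then one more differentiation in $\nub$ alone — which is costless — yields the claim.)

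The main obstacle, and the only step requiring real thought, is part \ref{part: Psib1}: it is the one place where the two-dimensional kernel is genuinely handled rather than circumvented, and it hinges on the nontrivial fact that $\Zcal_c$ is $\partial_x$-invariant (\eqref{eqn: nub derivatives}) together with the shift-invariance-derived orthogonality \eqref{eqn: Phib derivative ortho}. The bookkeeping points that need care are (a) that $\Phib_c(\phib,\omega) \in \Zcal_c$ rather than merely in $\Y$, which uses that $(\ind_{\Y}-\Pi_c)\Phib_c(\phib,\omega)=0$ combined with the range decomposition $\Y = \Zcal_c \oplus \Y_c^{\infty}$, and (b) the orthogonality $\Zcal_c \perp \X_c^{\infty}$ used to kill the cross term, which follows from $\X_c^{\infty} = (\ind_{\X}-\Pi_c)(\X)$ and $\Pi_c$ being the orthogonal projection onto $\Zcal_c$.
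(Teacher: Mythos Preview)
Your proposal is correct and follows essentially the same approach as the paper's proof. For part \ref{part: Psib1} the paper packages the argument slightly differently---it writes $\Phib_c = \Pi_c\Phib_c$, moves $\Pi_c$ across the inner product by self-adjointness, and then commutes $\Pi_c$ with $\partial_x$---but this is exactly your ``$\Phib_c(\phib,\omega)\in\Zcal_c$ is orthogonal to $\partial_x\Psib_c$'' observation unwound; for part \ref{part: Psib3} the paper argues that the range of $D_{\nub}\Psib_c(0,\omega_c)$ lies in $\ker(\L_c[\omega_c])\cap\Y_c^{\infty}=\{0\}$ rather than invoking invertibility of $D_{\psib}\F_c^{\infty}(0,0,\omega_c)$ directly, which amounts to the same thing. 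One small imprecision: $\partial_x\Psib_c(\nub,\omega)$ lies in $\Y_c^{\infty}$ (it is in $H_{\per}^1$, not necessarily $H_{\per}^2$), not $\X_c^{\infty}$ as you wrote, but the orthogonality to $\Zcal_c$ you need holds just the same.
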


\begin{proof}

%%-----------------------------------------------------------%%
\begin{enumerate}[label={\bf(\roman*)}]

%%-----------------------------------------------------------%%
\item
The derivative orthogonality property of $\Phib_c$ from \eqref{eqn: Phib derivative ortho} implies
\begin{equation}\label{eqn: Psib ortho 1}
\ip{\Phib_c(\nub+\Psib_c(\nub,\omega),\omega)}{\partial_x[\nub+\Psib_c(\nub,\omega)]}
= 0,
\end{equation}
and by \eqref{eqn: what Psib does}, we have
\begin{equation}\label{eqn: Psib ortho 2}
(\ind_{\Y}-\Pi_c)\Phib_c(\nub+\Psib_c(\nub,\omega),\omega)
= 0.
\end{equation}
Then we compute
\begin{equation}\label{eqn: the miracle calculation}
\begin{aligned}
0
&= \ip{\Pi_c\Phib_c(\nub+\Psib_c(\nub,\omega),\omega)}{\partial_x[\nub+\Psib_c(\nub,\omega)]} \text{ using \eqref{eqn: Psib ortho 2} in \eqref{eqn: Psib ortho 1}} \\
&= \ip{\Phib_c(\nub+\Psib_c(\nub,\omega),\omega)}{\Pi_c\partial_x[\nub+\Psib_c(\nub,\omega)]} \text{ by \eqref{eqn: Pi adjoint}} \\
&= \ip{\Phib_c(\nub+\Psib_c(\nub,\omega),\omega)}{\partial_x\Pi_c[\nub+\Psib_c(\nub,\omega)]} \text{ since } \partial_x \text{ and } \Pi_c \text{ commute} \\
&=  \ip{\Phib_c(\nub+\Psib_c(\nub,\omega),\omega)}{\partial_x\nub} \text{ since } \Pi_c\nub = \nub \text{ and } \Pi_c\Psib_c(\nub,\omega) = 0. 
\end{aligned}
\end{equation}

%%-----------------------------------------------------------%%
\item
By definition of $\F_c^{\infty}$ in \eqref{eqn: Fc-infty}, we have 
\[
\F_c(0,0,\omega) 
= (\ind_{\Y} -\Pi_c)\Phib_c(0,\omega)
= 0
\]
for all $\omega$.
By the uniqueness property of $\Psib_c$, we have $\Psib(0,\omega) = 0$ for all $\omega$.

%%-----------------------------------------------------------%%
\item
We differentiate \eqref{eqn: Psib ortho 2} with respect to $\nub \in \Zcal_c$ to find the operator-valued identity
\begin{equation}\label{eqn: impl diff 0}
(\ind_{\Y} -\Pi_c)D_{\phib}\Phib_c(\nub+\Psib_c(\nub,\omega),\omega)\big(\ind_{\Zcal_c} + D_{\nub}\Psib_c(\nub,\omega)\big)
= 0.
\end{equation}
Here $\ind_{\Zcal_c}$ is the identity operator on $\Zcal_c = \ker(\L_c[\omega_c])$.
Taking $\nub = 0$ and $\omega = \omega_c$ collapses \eqref{eqn: impl diff 0} to
\[
(\ind_{\Y} -\Pi_c)\L_c[\omega_c]\big(\ind_{\Zcal_c}+D_{\nub}\Psib_c(0,\omega_c)\big)
= 0,
\]
recalling $\L_c[\omega_c] = D_{\phib}\Phib_c(0,\omega_c)$ from \eqref{eqn: Lc}.
Since $\restr{\L_c[\omega_c]}{\Zcal_c} = 0$, this further reduces to
\[
\L_c[\omega_c]D_{\nub}\Psib_c(0,\omega_c)-\Pi_c\L_c[\omega_c]D_{\nub}\Psib_c(0,\omega_c)
= 0.
\]
Because $\ker(\L_c[\omega_c]) = \ker(\L_c[\omega_c]^*)$, it follows from the definition of $\Pi_c$ in \eqref{eqn: Pi} that $\Pi_c\L_c[\omega_c] = 0$.
Thus 
\[
\L_c[\omega_c]D_{\nub}\Psib_c(0,\omega_c)
= 0,
\]
and so the range of $D_{\nub}\Psib_c(0,\omega_c)$ is contained in $\ker(\L_c[\omega_c]) = \Zcal_c$.
But the range of $D_{\nub}\Psib_c(0,\omega_c)$ is also contained in $\Y_c^{\infty}$, and $\Y_c^{\infty} \cap \Zcal_c = \{0\}$ by \eqref{eqn: Zcal cap}.
Thus the range of $D_{\nub}\Psib_c(0,\omega_c)$ is trivial.

%%-----------------------------------------------------------%%
\item
This follows from the implicit function theorem, which guarantees that $\Psib_c$ is as regular as $\Phib_c$.
Since $D_{\phib}\Phib_c \in \Cal^1(H_{\per}^2(\R^2),L_{\per}^2(\R^2))$ by Lemma \ref{lem: Phib regularity}, $D_{\nub}\Psib_c$ inherits this regularity on $\B_{\Zcal_c\times\R}((0,\omega_c);\delta_c)$.
\qedhere
\end{enumerate}
\end{proof}

%%-----------------------------------------------------------%%
%%-----------------------------------------------------------%%
%%-----------------------------------------------------------%%
\subsection{The Lyapunov--Schmidt decomposition: finite-dimensional analysis}\label{sec: LS FD}
Now we solve the second equation in the Lyapunov--Schmidt decomposition \eqref{eqn: LS 2} with $\psib = \Psib_c(\nub,\omega)$.
This amounts to solving the pair of equations
\begin{equation}\label{eqn: LS FD}
\begin{cases}
\ip{\Phib_c(\nub+\Psib_c(\nub,\omega),\omega)}{\nub_1^c} = 0 \\
\ip{\Phib_c(\nub+\Psib_c(\nub,\omega),\omega)}{\nub_2^c} = 0,
\end{cases}
\end{equation}
where $\nub$ is a linear combination of the two linearly independent eigenfunctions $\nub_1^c$ and $\nub_2^c$.
The apparent quandary is that we want solutions parametrized in amplitude, so formally this suggests $\nub + \Psib_c(\nub,\omega) = \O(a)$.
This leads to our taking $\nub = a\nub_1^c$ below, which may appear to remove a degree of freedom from the ansatz.
In turn, this could appear to be problematic, given that we have two equations to solve above in \eqref{eqn: LS FD}.
Ostensibly we could have stayed with $\nub$ as a combination of $\nub_1^c$ and $\nub_2^c$.
None of this, however, is a problem, and we discuss at length in Section \ref{sec: the choice} why.

With the choice of 
\begin{equation}\label{eqn: the choice}
\nub 
= a\nub_1^c
\end{equation}
for $a \in \R$ sufficiently small, we need
\begin{subnumcases}{}
\ip{\Phib_c(a\nub_1^c+\Psib_c(a\nub_1^c,\omega),\omega)}{\nub_1^c} = 0 \label{eqn: LS 2} \\
\ip{\Phib_c(a\nub_1^c+\Psib_c(a\nub_1^c,\omega),\omega)}{\nub_2^c} = 0. \label{eqn: LS 3}
\end{subnumcases}
We claim that \eqref{eqn: LS 3} always holds.
Indeed, for $a = 0$, it is trivially true, since $\Phib_c(0,\omega) = 0$, while for $a \ne 0$ we have
\begin{align*}
\ip{\Phib_c(a\nub_1^c+\Psib_c(a\nub_1^c,\omega),\omega)}{\nub_2^c}
&= a^{-1}\ip{\Phib_c(a\nub_1^c+\Psib_c(a\nub_1^c,\omega),\omega)}{a\nub_2^c} \\ 
&= -a^{-1}\ip{\Phib_c(a\nub_1^c+\Psib_c(a\nub_1^c,\omega),\omega)}{\partial_x[a\nub_1^c]} \text{ by \eqref{eqn: nub derivatives}}\\
&= 0 \text{ by part \ref{part: Psib1} of Lemma \ref{lem: Psib}}.
\end{align*}
We emphasize that our success here traces back to the derivative orthogonality property \eqref{eqn: Phib derivative ortho}.

We conclude by solving \eqref{eqn: LS 2} with another application of the implicit function theorem, and this, again, is effectively the remainder of the proof of the Crandall--Rabinowitz--Zeidler theorem \cite[Thm.\@ 1.5.1]{kielhofer}.
Define
\[
\F_c^0 
\colon \B_{\R^2}((\omega_c,0); \delta_c/2)  \to \R
\colon (\omega,a) \mapsto \ip{\Phib_c(a\nub_1^c+\Psib_c(a\nub_1^c,\omega),\omega)}{\nub_1^c}.
\]
The threshold $\delta_c$ arose from the infinite-dimensional implicit function theorem argument in Section \ref{sec: LS ID}.

Since $\F_c^0(\omega,0) = 0$ for all $\omega$, we have
\[
\F_c^0(\omega,a) = a\H_c(\omega,a),
\qquad
\H_c(\omega,a) := \int_0^1 D_a\F_c^0(\omega,a\alpha) \dalpha.
\]
It therefore suffices to solve $\H_c(\omega,a) = 0$ by selecting $\omega$ as a function of $a$, and we do this by checking $\H_c(\omega_c,0) = 0$ and $D_{\omega}\H_c(\omega_c,0) \ne 0$.

Toward this end, we first differentiate
\[
D_a\F_c^0(\omega,a)
= \ip{D_{\phib}\Phib_c(a\nub_1^c+\Psib_c(a\nub_1^c,\omega))\big(\nub_1^c+D_{\nub}\Psib_c(a\nub_1^c,\omega)\nub_1^c\big)}{\nub_1^c}.
\]
We put $a = 0$ and use $\Psib_c(0,\omega) = 0$ to find for any $\omega$ that
\begin{equation}\label{eqn: Hc-omega}
\H_c(\omega,0)
= \int_0^1 D_a\F_c^0(\omega,0) \dalpha
= D_a\F_c(\omega,0)
= \ip{\L_c[\omega]\big(\nub_1^c+D_{\nub}\Psib_c(0,\omega)\nub_1^c\big)}{\nub_1^c}.
\end{equation}
In the special case of $\omega=\omega_c$, we can use either $D_{\nub}\Psib_c(0,\omega) = 0$ from part \ref{part: Psib3} of Lemma \ref{lem: Psib} or the condition $\L_c[\omega_c]^*\nub_1^c = 0$ to reduce \eqref{eqn: Hc-omega} to
\[
\H_c(\omega_c,0)
= \ip{\L_c[\omega_c]\nub_1^c}{\nub_1^c}
= 0.
\]

Next, with $\omega$ arbitrary, we differentiate \eqref{eqn: Hc-omega} with respect to $\omega$ and use the product rule to find
\[
D_{\omega}\H_c(\omega,0)
= \ip{\L_c'[\omega]\nub_1^c}{\nub_1^c}
+ \ip{\L_c'[\omega]D_{\nub}\Psib_c(0,\omega)\nub_1^c}{\nub_1^c}
+ \ip{\L_c[\omega]D_{\nub\omega}\Psib_c(0,\omega)\nub_1^c}{\nub_1^c}.
\]
Here we are using the shorter notation from \eqref{eqn: Lc-prime} of $\L_c'[\omega] = D_{\phib\omega}\Phib_c(0,\omega)$.
Taking $\omega = \omega_c$, we use $D_{\nub}\Psib_c(0,\omega_c) = 0$ to find that the second term is $0$.
At $\omega_c$, the third term is $0$ since $\L_c[\omega_c]^*\nub_1^c = 0$.
And so
\[
D_{\omega}\H_c(\omega_c,0)
= \ip{\L_c'[\omega_c]\nub_1^c}{\nub_1^c}
\ne 0,
\]
by Corollary \ref{cor: transversality}.

We are now in position to invoke the implicit function theorem once more, and we find $a_c$, $b_c > 0$ and a map $\Omega_c \colon (-a_c,a_c) \to \R$ such that 
\[
\H_c(\Omega_c(a),a)
= 0
\]
for $|a| < a_c$, while if $|\omega-\omega_c| < b_c$ and $|a| < a_c$ and $\H_c(\omega,a) = 0$, then $\omega = \Omega_c(a)$.
In particular, $\Omega_c(0) = \omega_c$.

In short, taking
\[
\phib_c^a := a\nub_1 + \Psib_c(a\nub_1,\Omega_c(a))
\quadword{and}
\omega_c^a := \Omega_c(a)
\]
solves our original problem $\Phib_c(\phib_c^a,\omega_c^a) = 0$.
We can expose uniformly the ``amplitude'' parameter of $a$ in $\phib_c^a$ by setting
\[
\psib_c(a)
:= \Psib_c(a\nub_1,\Omega_c(a)),
\]
and computing
\[
\psib_c(0) = \Psib_c(0,\omega_c) = 0,
\quad
\psib_c(a) = a\int_0^1 D_a\psib_c(a\alpha) \dalpha,
\quad\text{and}\quad
\phib = a\left(\nub_1+\int_0^1 D_a\psib_c(a\alpha) \dalpha\right).
\]
Likewise, we can write
\[
\omega_c^a
= \omega_c + a\xi_c^a,
\qquad
\xi_c^a := \int_0^1 D_a\Omega_c(a\alpha) \dalpha.
\]
This concludes our first proof of Theorem \ref{thm: main}.

%%-----------------------------------------------------------%%
%%-----------------------------------------------------------%%
%%-----------------------------------------------------------%%
\subsection{Remarks on the choice \eqref{eqn: the choice}}\label{sec: the choice}
We discuss our decision at the start of Section \ref{sec: LS FD} to specialize the finite-dimensional component $\nub$ to $\nub = a\nub_1^c$.
We consider two aspects of this choice to allay any concerns about its peculiarity or restrictiveness.

First, up to a shift, any solution $\phib$ to $\Phib_c(\phib,\omega) = 0$ has this form $\phib = a\nub_1^c + \psib$, with $a \in \R$ and $\psib$ orthogonal to $\nub_0$, $\nub_1^c$, and $\nub_2^c$.

%%-----------------------------------------------------------%%
%%-----------------------------------------------------------%%
\begin{lemma}\label{lem: soln structure 1}
Let $\omega \in \R$.
If $\phib \in \X$ solves $\Phib_c(\phib,\omega) = 0$, then there exist $a$, $\theta \in \R$ and $\psib \in \X_c^{\infty}$ such that 
\begin{equation}\label{eqn: shifted phib}
\phib 
= S^{\theta}(a\nub_1^c + \psib).
\end{equation}
In particular, $\Phib_c(a\nub_1^c+\psib,\omega) = 0$, as well.
\end{lemma}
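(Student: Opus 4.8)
The plan is to reduce the eigenfunction $\nub_1^c$-normalization to a phase-choice problem using the shift invariance of $\Phib_c$ established in Corollary \ref{cor: shift}\ref{part: Phib shift invariant}, together with the explicit structure of $\nub_1^c$ and $\nub_2^c$ from Corollary \ref{cor: eigenfunctions}. Start with an arbitrary solution $\phib \in \X$ of $\Phib_c(\phib,\omega) = 0$. Decompose $\phib = \nub + \psib$ with $\nub = \Pi_c\phib \in \Zcal_c$ and $\psib = (\ind_\X - \Pi_c)\phib \in \X_c^\infty$; automatically $\ip{\psib}{\nub_0} = \ip{\psib}{\nub_1^c} = \ip{\psib}{\nub_2^c} = 0$ by definition of $\X$ and $\Pi_c$. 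Since $\nub \in \Zcal_c = \spn(\nub_1^c,\nub_2^c)$, write $\nub = \alpha\nub_1^c + \gamma\nub_2^c$ for real $\alpha,\gamma$ (reality follows because $\phib$ and the eigenfunctions are $\R^2$-valued). The goal is then to choose $\theta$ so that $S^{-\theta}\nub$ is a real multiple of $\nub_1^c$.

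The key computational step is to understand how $S^{-\theta}$ acts on $\Zcal_c$. Using the shift identity \eqref{eqn: nub shifts}, $\nub_2^c = S^{-\pi/2}\nub_1^c$, and more generally one checks from the explicit Fourier-mode form \eqref{eqn: nub1}, \eqref{eqn: nub2} that $S^{-\theta}$ acts on the two-dimensional real space $\Zcal_c$ as rotation: $S^{-\theta}\nub_1^c = \cos(\theta)\nub_1^c + \sin(\theta)\nub_2^c$ and $S^{-\theta}\nub_2^c = -\sin(\theta)\nub_1^c + \cos(\theta)\nub_2^c$ (the precise signs follow by matching the $e^{\pm ix}$ coefficients and using $\partial_x\nub_1^c = -\nub_2^c$, i.e. $S^{-\theta} = e^{-\theta\partial_x}$ restricted to $\Zcal_c$ is the flow of the rotation generator). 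Hence $S^{-\theta}\nub = (\alpha\cos\theta - \gamma\sin\theta)\nub_1^c + (\alpha\sin\theta + \gamma\cos\theta)\nub_2^c$, and choosing $\theta$ with $\tan\theta = -\gamma/\alpha$ (or $\theta = \pi/2$ if $\alpha = 0$) kills the $\nub_2^c$-component. Set $a := \alpha\cos\theta - \gamma\sin\theta = \pm\sqrt{\alpha^2+\gamma^2}$, so $S^{-\theta}\nub = a\nub_1^c$. Because $S^{-\theta}$ is an isometry on $L^2_{\per}$ that commutes with $\partial_x$ and preserves $\Zcal_c$ and its orthogonal complement, $S^{-\theta}\psib \in \X_c^\infty$ as well; relabel $\psib := S^{-\theta}\psib$. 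Then $S^{-\theta}\phib = a\nub_1^c + \psib$, i.e. $\phib = S^{\theta}(a\nub_1^c + \psib)$, which is \eqref{eqn: shifted phib}.

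For the final assertion, apply shift invariance: from $\Phib_c(\phib,\omega) = 0$ and part \ref{part: Phib shift invariant} of Corollary \ref{cor: shift}, $0 = S^{-\theta}\Phib_c(\phib,\omega) = S^{-\theta}\Phib_c(S^\theta(a\nub_1^c+\psib),\omega) = \Phib_c(a\nub_1^c+\psib,\omega)$, as claimed.

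I expect the main obstacle to be verifying carefully that $S^{-\theta}$ acts as an honest rotation on $\Zcal_c$ with the correct orientation and that $a$ comes out real — this is where one must be attentive to the complex-exponential bookkeeping in \eqref{eqn: nub1}–\eqref{eqn: nub2}, though it is routine given \eqref{eqn: nub derivatives} and \eqref{eqn: nub shifts}. A minor secondary point is confirming $S^{-\theta}$ maps $\X_c^\infty$ to itself, which follows because $S^{-\theta}$ is unitary on $L^2_{\per}$, preserves $H^2_{\per}$, fixes $\nub_0$ (constants are shift-invariant), and maps the orthogonal complement of $\Zcal_c$ within $\Y$ to itself since it preserves $\Zcal_c$ and inner products.
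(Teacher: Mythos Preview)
Your proof is correct and follows essentially the same strategy as the paper: decompose $\phib$ into its $\Zcal_c$-component and its $\X_c^{\infty}$-component, then shift so that the $\Zcal_c$-component aligns with $\nub_1^c$, and verify that the shift preserves $\X_c^{\infty}$. The only cosmetic difference is that the paper encodes the rotation via complex polar coordinates (writing $a_1 - ia_2 = ae^{i\theta}$ and using $\nub_1^c(x) = 2\re[e^{ix}\hat{\nub_1^c}(1)]$), whereas you write out the rotation action of $S^{-\theta}$ on $\spn(\nub_1^c,\nub_2^c)$ explicitly; the two viewpoints are equivalent.
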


\begin{proof}
We prove the last sentence first.
If a solution $\phib$ to $\Phib_c(\phib,\omega) = 0$ has the form \eqref{eqn: shifted phib}, then the shift invariance of $\Phib_c$ from \eqref{eqn: Phib shift invariance} implies
\[
\Phib_c(a\nub_1^c+\psib,\omega)
= S^{-\theta}\Phib_c(S^{\theta}(a\nub_1^c+\psib),\omega)
= S^{-\theta}\Phib_c(\phib,\omega)
= 0.
\]

It remains to prove the decomposition \eqref{eqn: shifted phib}.
We can always write 
\begin{equation}\label{eqn: phib with tilde}
\phib
= a_1\nub_1^c + a_2\nub_2^c + \tilde{\psib}
\end{equation}
for some $a_1$, $a_2 \in \R$ and $\tilde{\psib} \in \X_c^{\infty}$.
Write
\[
a_1-ia_2
= ae^{i\theta}
\]
in polar coordinates, where $a$, $\theta \in \R$.
(If $a_1 = a_2 = 0$, just take $a = 0$ and $\psib = \tilde{\psib}$.)
The identities $\nub_1^c(x) = 2\re[e^{ix}\hat{\nub_1^c}(1)]$ from \eqref{eqn: nub1} and $\nub_2^c = S^{-\pi/2}\nub_1^c$ from \eqref{eqn: nub shifts} then imply
\[
a_1\nub_1^c(x) +a_2\nub_2^c(x)
= a_1\nub_1^c(x)+a_2S^{-\pi/2}\nub_1^c(x)
= 2\re[(a_1-ia_2)e^{ix}\hat{\nub_1^c}(1)]
= a(S^{\theta}\nub_1^c)(x).
\]
Returning to \eqref{eqn: phib with tilde}, we have
\[
\phib = S^{\theta}(a\nub_1^c+\psib),
\qquad
\psib := S^{-\theta}\tilde{\psib}.
\]

We conclude by checking that if $\tilde{\psib} \in \X_c^{\infty}$, then $\psib = S^{-\theta}\tilde{\psib} \in \X_c^{\infty}$.
That is, we assume
\begin{equation}\label{eqn: original membership in Xcinfty1}
\ip{\tilde{\psib}}{\nub_1^c} 
= \ip{\tilde{\psib}}{\nub_2^c}
= 0,
\end{equation}
and we want to show
\begin{equation}\label{eqn: shifted membership in Xcinfty1}
\ip{S^{-\theta}\tilde{\psib}}{\nub_1^c}
= \ip{S^{-\theta}\tilde{\psib}}{\nub_2^c}
= 0.
\end{equation}
By the orthogonality condition \eqref{eqn: nub ortho equiv}, our assumption \eqref{eqn: original membership in Xcinfty1} is equivalent to
\begin{equation}\label{eqn: original membership in Xcinfty2}
\hat{\tilde{\psib}}(1)\cdot\hat{\nub_1^c}(1)
= 0,
\end{equation}
and our desired conclusion \eqref{eqn: shifted membership in Xcinfty1} is equivalent to
\begin{equation}\label{eqn: shifted membership in Xcinfty2}
(e^{-i\theta}\hat{\tilde{\psib}}(1))\cdot\hat{\nub_1^c}(1)
= 0.
\end{equation}
Certainly \eqref{eqn: original membership in Xcinfty2} implies \eqref{eqn: shifted membership in Xcinfty2}.
\end{proof}

Consequently, trying an ansatz of the form $\phib = a\nub_1^c + b\nub_2^c + \psib$ in the hope that $a$ and $b$ would be enough to manage the two equations in \eqref{eqn: LS FD} will not be effective; informally, the problem simply does not ``see'' the two unknowns $a$ and $b$ simultaneously.
And such an ansatz would not expose the single uniform amplitude parameter that we desire, anyway.

There is nothing special about $\nub_1^c$ here, and we could just as easily show that any solution to $\Phib_c(\phib,\omega)$ is a shifted version of a solution of the form $a\nub_2^c + \psib$.
In fact, we could have run the bifurcation argument above using $\nub_2^c$ throughout in place of $\nub_1^c$.
This hinges on expressing the transversality inequality of Corollary \ref{cor: transversality} in terms of $\nub_2^c$, which is possible because of the calculation
\begin{equation}\label{eqn: transverse on nub2}
\ip{\L_c'[\omega_c]\nub_2^c}{\nub_2^c}
= \ip{\L_c'[\omega_c]\partial_x\nub_1^c}{\partial_x\nub_1^c}
= \ip{\partial_x\L_c'[\omega_c]\nub_1^c}{\partial_x\nub_1^c}
= -\ip{\L_c'[\omega_c]\nub_1^c}{\partial_x^2\nub_1^c}
= \ip{\L_c'[\omega_c]\nub_1^c}{\nub_1^c}.
\end{equation}
The second equality relies on the commutativity of the Fourier multipliers $\partial_x$ and $\L_c'[\omega_c]$ on $H_{\per}^3(\R^2)$.
However, since we can write any solution in the form
\[
S^{\theta}(a\nub_1^c+\psib)
= S^{\theta}(aS^{\pi/2}\nub_2^c+\psib)
= S^{\theta+\pi/2}(a\nub_2^c+S^{-\pi/2}\psib)
\]
with $\psib$, and thus (by the end of the proof of Lemma \ref{lem: soln structure 1}) $S^{-\pi/2}\psib$, orthogonal to $\nub_1^c$ and $\nub_2^c$, there is not much point to this line of inquiry.

Next, we consider further the special form of solutions to $\Phib_c(\phib,\omega) = 0$ as given in Theorem \ref{thm: main}: they are $\phib = a(\nub_1^c + \psib)$ with $\psib$ again orthogonal to $\nub_1^c$ and $\nub_2^c$.
This may appear to be less general than the result of Lemma \ref{lem: soln structure 1}, which says that, up to a shift, any solution has the form $a\nub_1^c + \tilde{\psib}$, with $\tilde{\psib}$ satisfying the perennial orthogonality conditions.
Of course, if $a \ne 0$, then this solution factors as $a(\nub_1^c + a^{-1}\tilde{\psib})$, and that has the form given by Theorem \ref{thm: main}.
It turns out that all {\it{small}} nontrivial solutions to $\Phib_c(\phib,\omega) = 0$ have this special factored form (again, up to a shift from Lemma \ref{lem: soln structure 1}).

We prove a negative version of this result, which says that if the shifted solution from Lemma \ref{lem: soln structure 1} has the form $\phib = \tilde{\psib}$ alone, i.e., if $a = 0$, and if this solution is sufficiently small, then it is trivial.

%%-----------------------------------------------------------%%
%%-----------------------------------------------------------%%
\begin{lemma}\label{lem: soln structure 2}
There exists $\delta_c^{\infty} > 0$ such that if $\psib \in \X_c^{\infty}$ and $\omega \in \R$ with $\norm{\psib}_{H_{\per}^2} + |\omega-\omega_c| < \delta_c^{\infty}$, and if $\Phib_c(\psib,\omega) = 0$, then $\psib = 0$.
\end{lemma}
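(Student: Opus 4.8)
The plan is to read this as a uniqueness statement for the infinite-dimensional Lyapunov--Schmidt equation $\F_c^{\infty}(\psib,0,\omega) = 0$ and deduce it directly from the implicit function theorem that produced the map $\Psib_c$ in Section~\ref{sec: LS ID}.

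First I would note that a solution of the hypothesized form automatically solves the first Lyapunov--Schmidt equation with $\nub = 0$. Indeed, suppose $\psib \in \X_c^{\infty}$ and $\omega \in \R$ satisfy $\Phib_c(\psib,\omega) = 0$. Since $\X_c^{\infty} \subseteq \X$, we have $\Phib_c(\psib,\omega) \in \Y$, so applying $\ind_{\Y}-\Pi_c$ to the equation $\Phib_c(\psib,\omega) = 0$ gives $(\ind_{\Y}-\Pi_c)\Phib_c(\psib,\omega) = 0$. By the definition of $\F_c^{\infty}$ in \eqref{eqn: Fc-infty} with $\nub = 0$, this is precisely $\F_c^{\infty}(\psib,0,\omega) = 0$.

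Next I would line up the smallness thresholds and invoke uniqueness. Set $\delta_c^{\infty} := \min\{\ep_c,\delta_c\}$, with $\ep_c$ and $\delta_c$ the radii from the implicit function theorem application in Section~\ref{sec: LS ID}. If $\norm{\psib}_{H_{\per}^2} + |\omega-\omega_c| < \delta_c^{\infty}$, then $\psib \in \B_{\X_c^{\infty}}(0;\ep_c)$ and $(0,\omega) \in \B_{\Zcal_c\times\R}((0,\omega_c);\delta_c)$, so the uniqueness clause associated with $\Psib_c$ forces $\psib = \Psib_c(0,\omega)$. By part (ii) of Lemma~\ref{lem: Psib}, $\Psib_c(0,\omega) = 0$, hence $\psib = 0$.

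There is essentially no obstacle here: the entire content of the lemma is packaged in the uniqueness assertion of the implicit function theorem, and the only care required is choosing $\delta_c^{\infty}$ so that the triple $(\psib,0,\omega)$ lands in the neighborhood on which that uniqueness holds. Worth emphasizing in the write-up is that the second, finite-dimensional Lyapunov--Schmidt equation $\Pi_c\Phib_c(\psib,\omega) = 0$ is of course also satisfied here, but plays no role in forcing $\psib$ to vanish.
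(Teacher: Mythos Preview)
Your proof is correct and uses essentially the same idea as the paper: both arguments rest on the uniqueness clause of the implicit function theorem applied to $(\ind_{\Y}-\Pi_c)\Phib_c(\psib,\omega)=0$. The only difference is cosmetic---the paper re-runs the implicit function theorem on the restricted map $\tilde{\F}_c^{\infty}(\psib,\omega):=\F_c^{\infty}(\psib,0,\omega)$, whereas you more economically reuse the map $\Psib_c$ already built in Section~\ref{sec: LS ID} together with Lemma~\ref{lem: Psib}(ii).
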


\begin{proof}
Define
\[
\tilde{\F}_c^{\infty} 
\colon \X_c^{\infty} \times \R \to \Y_c^{\infty}
\colon (\psib,\omega) \mapsto (\ind_{\Y} -\Pi_c)\Phib_c(\psib,\omega).
\]
The notation and structure of this map are intentionally similar to those of $\F_c^{\infty}$ in \eqref{eqn: Fc-infty}, and the spaces $\X_c^{\infty}$ and $\Y_c^{\infty}$ are defined in \eqref{eqn: X Y infty}.
Then $\tilde{\F}_c^{\infty}(0,\omega) = 0$ for all $\omega$ and
\[
D_{\psib}\tilde{\F}_c^{\infty}(0,\omega_c)
= (\ind_{\Y} -\Pi_c)\restr{\L_c[\omega_c]}{\X_c^{\infty}}
\]

As with the analogous linearization in Section \ref{sec: LS ID}, this operator has trivial kernel and cokernel and therefore is invertible.
The implicit function theorem gives $\delta_c^{\infty}$, $\ep_c^{\infty} > 0$ and a map
\[
\Psib_c^{\infty} 
\colon (\omega_c-\delta_c^{\infty},\omega_c+\delta_c^{\infty}) \to \X_c^{\infty}
\]
such that $\tilde{\F}_c^{\infty}(\Psib_c^{\infty}(\omega),\omega) = 0$ for $|\omega-\omega_c| < \delta_c^{\infty}$.
Moreover, if $|\omega-\omega_c| < \delta_c^{\infty}$ and $\norm{\psib}_{H_{\per}^2} < \ep_c^{\infty}$, and if $\tilde{\F}_c^{\infty}(\psib,\omega) = 0$, then $\psib = \Psib_c^{\infty}(\omega)$.
But $\tilde{\F}_c^{\infty}(0,\omega) = 0$ for all $\omega$, and so we must have $\Psib_c^{\infty}(\omega) = 0$ for all $\omega$.
Conversely, if $\Phib_c(\psib,\omega) = 0$ then $\tilde{\F}_c^{\infty}(\psib,\omega) = 0$, too, and so if $|\omega-\omega_c| < \delta_c^{\infty}$ and $\norm{\psib}_{H_{\per}^2} < \ep_c^{\infty}$, then $\psib = \Psib_c^{\infty}(\omega) = 0$.
\end{proof}

Lemmas \ref{lem: soln structure 1} and \ref{lem: soln structure 2} together effectively tell us that the only worthwhile form of solutions to $\Phib_c(\phib,\omega)$ is $\phib = a(\nub_1^c+\psib)$.
When we study this problem quantitatively in Section \ref{sec: quantitative}, we will start directly with an ansatz of this form.

%%-----------------------------------------------------------%%
%%-----------------------------------------------------------%%
%%-----------------------------------------------------------%%
%%-----------------------------------------------------------%%
%%-----------------------------------------------------------%%
\section{The Lyapunov Center Formulation}\label{sec: LC}
We solve the problem
\begin{equation}\label{eqn: LC}
\Phib_c(\phib,\omega) + \gamma\phib'
= 0
\end{equation}
for $\phib \in H_{\per}^2(\R^2)$ and $\omega$, $\gamma \in \R$.
The extra unknown $\gamma$ closes the overdetermined system that results from the two solvability conditions induced by the two-dimensional cokernel; we show momentarily that any solution to \eqref{eqn: LC} necessarily has $\gamma=0$, and so solving \eqref{eqn: LC} really returns solutions to our original problem $\Phib_c(\phib,\omega) = 0$.
This strategy is based on the work of Wright and Scheel in \cite[Sec.\@ 8]{wright-scheel}; in their words, ``[t]he idea is to augment the Hamiltonian equation with a dissipation term, for instance $\gamma\nabla{H}$, so that for $\gamma\ne0$, the system is gradient-like and does not possess any small non-equilibrium solutions.''
In turn, the proof there was motivated by a proof of the Lyapunov center theorem \cite[Thm.\@ 3.2]{ambrosetti-prodi}.

Some (though not all) of the implicit function theorem arguments are quite similar to those in Sections \ref{sec: LS ID} and \ref{sec: LS FD}, so we move rather more briskly here.
We emphasize that while the existence proof developed in this section is not strictly necessary for logical completeness of our argument, we see it as a potentially useful alternative to the first proof in that it is completely independent of the gradient structure. 

First we show that any nonconstant solution to \eqref{eqn: LC} has $\gamma = 0$; the following calculation is similar to \cite[Lem.\@ 3.1]{ambrosetti-prodi}, which was done in preparation for their proof of the Lyapunov center theorem.
If \eqref{eqn: LC} holds, then
\begin{equation}\label{eqn: why LC works}
0
= \ip{\Phib_c(\phib,\omega)+\gamma\phib'}{\phib'}
= \ip{\Phib_c(\phib,\omega)}{\phib'} + \gamma\norm{\phib'}^2
= \gamma\norm{\phib'}^2.
\end{equation}
Since $\phib$ is nonconstant, we must have $\gamma = 0$.

In \eqref{eqn: why LC works} we used the derivative orthogonality property
\[
\ip{\Phib_c(\phib,\omega)}{\phib'}
= 0,
\]
as established in Corollary \ref{cor: shift} using the gradient formulation.
However, with some more work, this can be checked directly from the definition of $\Phib_c$.

%%-----------------------------------------------------------%%
%%-----------------------------------------------------------%%
\begin{lemma}\label{lem: deriv ortho}
Let $\phib \in H_{\per}^1(\R^2)$ and define
\begin{equation}\label{eqn: Jc}
\J_c(\phib,\omega)
:= c^2\omega^2\frac{(\phi_1')^2}{2}
+ c^2\omega^2\frac{(\phi_2')^2}{2w} 
+ \V_1(\phi_2-S^{-\omega}\phi_1)
+ \V_2(\phi_1-S^{-\omega}\phi_2)
+ \I(\phib,\omega),
\end{equation}
where
\[
\I(\phib,\omega)(x)
:= 
\int_x^{x-\omega} \V_1'(S^{\omega}\phi_2-\phi_1)\phi_1'
+ \int_x^{x-\omega} \V_2'(S^{\omega}\phi_1-\phi_2)\phi_2'.
\]
Then
\begin{equation}\label{eqn: ip Phib phib J}
\ip{\Phib_c(\phib,\omega)}{\phib'}
= \int_{-\pi}^{\pi} \partial_x\J_c(\phib,\omega).
\end{equation}
In particular, since $\J_c(\phib,\omega)$ is $2\pi$-periodic, 
\[
\ip{\Phib_c(\phib,\omega)}{\phib'}
= 0.
\]
\end{lemma}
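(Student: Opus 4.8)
The plan is to prove the identity \eqref{eqn: ip Phib phib J} by a direct computation, writing out $\ip{\Phib_c(\phib,\omega)}{\phib'}$ from the definition \eqref{eqn: Phib} of $\Phib_c$ and showing term by term that the integrand is a total derivative, namely $\partial_x\J_c(\phib,\omega)$. Since $\ip{\cdot}{\cdot}$ is the $L^2_{\per}$ inner product, $\ip{\Phib_c(\phib,\omega)}{\phib'} = \int_{-\pi}^{\pi} \Phib_c(\phib,\omega)\cdot\phib'\,dx$; once we show this integrand equals $\partial_x\J_c(\phib,\omega)$ and note $\J_c(\phib,\omega)$ is $2\pi$-periodic (each constituent piece being a product/composition of $2\pi$-periodic functions and their shifts), the integral vanishes, giving the ``in particular'' claim immediately.

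First I would handle the kinetic term. Using \eqref{eqn: Phib} with $M = \mathrm{diag}(1,m)$ and $w = 1/m$, the contribution of $c^2\omega^2 M\phib''$ to the inner product's integrand is $c^2\omega^2(\phi_1''\phi_1' + m\phi_2''\phi_2') = c^2\omega^2\partial_x\big((\phi_1')^2/2 + m(\phi_2')^2/2\big)$; rewriting $m = 1/w$ matches the first two terms of $\J_c$ in \eqref{eqn: Jc}. Next I would handle the ``local'' potential pieces: the terms $\V_2'(\phi_1 - S^{-\omega}\phi_2)$ and $-\V_1'(S^{\omega}\phi_2 - \phi_1)$ multiplied by $\phi_1'$, and the analogous pieces multiplied by $\phi_2'$. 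The term $\V_2'(\phi_1 - S^{-\omega}\phi_2)\phi_1'$ is not by itself a total derivative because of the $S^{-\omega}\phi_2$ argument, but pairing it appropriately with a companion term from the $\phi_2'$ row (coming from $-\V_2'(S^{\omega}\phi_1 - \phi_2)\phi_2'$, after recognizing $S^{\omega}\phi_1 - \phi_2$ as the shift by $\omega$ of $\phi_1 - S^{-\omega}\phi_2$) produces $\partial_x$ of $\V_2(\phi_1 - S^{-\omega}\phi_2)$ plus a leftover term involving a shifted derivative; that leftover is precisely what the correction integral $\I(\phib,\omega)$ is designed to absorb, since $\partial_x\int_x^{x-\omega} g = g(x-\omega) - g(x)$ for any continuous $2\pi$-periodic $g$. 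The same bookkeeping applied to the $\V_1$-pieces yields $\partial_x\V_1(\phi_2 - S^{-\omega}\phi_1)$ plus the second piece of $\I$.

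The main obstacle is the careful tracking of the shift operators and signs: one must verify that the four cross terms (two from each component row of $\Phib_c$) regroup into exactly two ``clean'' total derivatives $\partial_x\V_1(\cdot)$ and $\partial_x\V_2(\cdot)$ plus exactly the two shifted-integral corrections comprising $\I(\phib,\omega)$, with no residual terms. Concretely I would use the identity $\frac{d}{dx}\V_j\big(\phi - S^{-\omega}\psi\big) = \V_j'\big(\phi - S^{-\omega}\psi\big)\big(\phi' - S^{-\omega}\psi'\big)$ together with the adjoint/shift relation $\int_{-\pi}^{\pi}(S^{-\omega}f)g = \int_{-\pi}^{\pi} f(S^{\omega}g)$ (valid for $2\pi$-periodic $f,g$) to move shifts off $\phi_2'$ and onto the $\V'$-factors, matching them against the terms already present. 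Finally I would confirm $\J_c(\phib,\omega)$ is $2\pi$-periodic in $x$---each of the first four terms manifestly so, and $\I(\phib,\omega)$ because $\int_{x+2\pi}^{x+2\pi-\omega} g = \int_x^{x-\omega} g$ when $g$ is $2\pi$-periodic---so that $\int_{-\pi}^{\pi}\partial_x\J_c(\phib,\omega) = \J_c(\phib,\omega)\big|_{-\pi}^{\pi} = 0$, which completes the proof. Throughout, the hypothesis $\phib \in H^1_{\per}(\R^2)$ ensures $\phi_j' \in L^2_{\per}$ and $\phi_j$ is continuous, so all products and integrals are well defined, though one might pass first through smooth $\phib$ and use density for the final identity.
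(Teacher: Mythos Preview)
Your approach is essentially the same as the paper's: both compute $\Phib_c(\phib,\omega)\cdot\phib'$ directly, identify the kinetic piece as an obvious perfect derivative, and then regroup the four $\V_j'$ cross-terms so that each pair yields $\partial_x\V_j(\cdot)$ plus a leftover of the form $(S^{-\omega}-1)[g]$, which is absorbed into $\partial_x\I(\phib,\omega)$ via the identity $\partial_x\int_x^{x-\omega} g = g(x-\omega)-g(x)$. One small remark: the integral adjoint relation $\int(S^{-\omega}f)g=\int f(S^\omega g)$ you mention is not actually needed (and cannot help with a \emph{pointwise} identity); the paper instead just adds and subtracts $\V_1'(\phi_2-S^{-\omega}\phi_1)S^{-\omega}\phi_1'$ and factors $\V_1'(\phi_2-S^{-\omega}\phi_1)S^{-\omega}\phi_1' = S^{-\omega}[\V_1'(S^{\omega}\phi_2-\phi_1)\phi_1']$ pointwise, which is the cleaner way to execute the regrouping you describe.
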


\begin{proof}
The proof of \eqref{eqn: ip Phib phib J} is a direct calculation using the definition of $\J_c$ above and the definition of $\Phib_c$ in \eqref{eqn: Phib}, but, for clarity, we provide some details as to how $\J_c$ naturally arises.
First, computing the dot product yields
\begin{multline*}
\Phib_c(\phib,\omega)\cdot\phib'
= c^2\omega^2\phi_1''\phi_1'
+ \frac{c^2}{w}\omega^2\phi_2''\phi_2'
+\V_1'(\phi_2-S^{-\omega}\phi_1)\phi_2'
-\V_1'(S^{\omega}\phi_2-\phi_1)\phi_1' \\
+ \V_2'(\phi_1-S^{-\omega}\phi_2)\phi_1'
-\V_2'(S^{\omega}\phi_1-\phi_2)\phi_2'.
\end{multline*}
The first two terms are perfect derivatives, but the others involving $\V_1'$ and $\V_2'$ need some modification.
We work with just the $\V_1'$ terms to show the origin of the first of the two integrals in $\I$.
Adding zero, we have
\begin{align*}
\V_1'(\phi_2-S^{-\omega}\phi_1)\phi_2'
-\V_1'(S^{\omega}\phi_2-\phi_1)\phi_1'
&= \V_1'(\phi_2-S^{-\omega}\phi_1)\phi_2' - \V_1'(\phi_2-S^{-\omega}\phi_1)S^{-\omega}\phi_1' \\
&+ \V_1'(\phi_2-S^{-\omega}\phi_1)S^{-\omega}\phi_1'-\V_1'(S^{\omega}\phi_2-\phi_1)\phi_1' \\
\\
&= \V_1'(\phi_2-S^{-\omega}\phi_1)(\phi_2'-S^{-\omega}\phi_1') \\
&+ (S^{-\omega}-1)[\V_1'(S^{\omega}\phi_2-\phi_1)\phi_1'].
\end{align*}
Here we have factored
\[
\V_1'(\phi_2-S^{-\omega}\phi_1)S^{-\omega}\phi_1'
= S^{-\omega}[\V_1'(S^{\omega}\phi_2-\phi_1)\phi_1'].
\]
to get the second term in the second equality above.
In the first term of that second equality, we immediately recognize the perfect derivative
\[
\V_1'(\phi_2-S^{-\omega}\phi_1)(\phi_2'-S^{-\omega}\phi_1')
= \partial_x[\V_1(\phi_2-S^{-\omega}\phi_1)].
\]
Finally, we use the identity
\[
\partial_x\left[\int_x^{x-\omega} f\right]
= f(x-\omega)-f(x)
= [(S^{-\omega}-1)f](x)
\]
to rewrite
\[
(S^{-\omega}-1)[\V_1'(S^{\omega}\phi_2-\phi_1)\phi_1']
= \partial_x\left[\int_x^{x-\omega} \V_1'(S^{\omega}\phi_2-\phi_1)\phi_1'\right].
\]
Repeating these calculations on the $\V_2'$ terms shows $\Phib_c(\phib,\omega)\cdot\phib' = \partial_x\J_c(\phib,\omega)$, and that is \eqref{eqn: ip Phib phib J}.
\end{proof}

%%-----------------------------------------------------------%%
%%-----------------------------------------------------------%%
\begin{remark}
The structure of the operator $\J_c$ in \eqref{eqn: Jc} bears some resemblance to the first integral in \cite[Prop.\@ 3.10]{faver-hupkes-spatial-dynamics} for the spatial dynamics formulation of the traveling wave problem.
Indeed, the existence of that conserved quantity from the spatial dynamics viewpoint inspired us to search for a related conserved quantity in this traveling wave framework, and $\J_c$ naturally emerged.
Moreover, $\J_c$ is constant on solutions to $\Phib_c(\phib,\omega) = 0$ in the sense that if $\phib$ and $\omega$ satisfy this equation, it can be checked that $\partial_x\J_c(\phib,\omega) = 0$.
This leads to another (related) proof that the existence of a nonconstant solution to \eqref{eqn: LC} forces $\gamma = 0$: if $\phib$ and $\omega$ meet \eqref{eqn: LC}, it follows that 
\[
\partial_x\J_c(\phib,\omega)
= -\gamma\left((\phi_1')^2+\frac{(\phi_2')^2}{w}\right).
\]
If $\gamma > 0$, then $\partial_x\J_c(\phib,\omega)$ is nonpositive and not identically zero; since $\J_c(\phib,\omega)$ is periodic, this is impossible.
A similar contradiction results if $\gamma < 0$.
\end{remark}

Now we study the problem \eqref{eqn: LC} with a Lyapunov--Schmidt decomposition as in Sections \ref{sec: LS ID} and \ref{sec: LS FD}.
Using the projection operator $\Pi_c$ and the function spaces $\X_c^{\infty}$, $\Y_c^{\infty}$, and $\Zcal_c$ from Section \ref{sec: function spaces}, we split \eqref{eqn: LC} into the pair of equations
\[
\begin{cases}
(\ind_{\Y} -\Pi_c)\Phib_c(\nub+\psib,\omega) + \gamma(\ind_{\X}-\Pi_c)(\nub'+\psib') = 0 \\
\Pi_c\Phib_c(\nub+\psib,\omega) + \gamma\Pi_c(\nub'+\psib') = 0,
\end{cases}
\]
where $\phib = \nub+\psib$ and $\nub \in \Zcal_c$, $\psib \in \X_c^{\infty}$.
We can simplify the terms involving $\gamma$:
\[
\Pi_c\nub' = \partial_x\Pi_c\nub = \nub'
\quadword{and}
\Pi_c\psib' = \partial_x\Pi_c\psib = 0,
\]
since $\Pi_c$ and $\partial_x$ commute by Lemma \ref{lem: Pi partial commute} and since $\Pi_c\nub = \nub$ while $\Pi_c\psib = 0$.
Then the decomposition reads
\begin{subnumcases}{}
(\ind_{\X}-\Pi_c)\Phib_c(\nub+\psib,\omega) + \gamma\psib' = 0 \label{eqn: LC ID} \\
\Pi_c\Phib_c(\nub+\psib,\omega) + \gamma\nub' = 0, \label{eqn: LC FD} 
\end{subnumcases}
and this is the problem that we will solve here.

First we address the infinite-dimensional equation \eqref{eqn: LC ID}.
Using the same notation as in Section \ref{sec: LS ID}, define
\[
\F_c^{\infty} 
\colon \X_c^{\infty} \times \Zcal_c \times \R^2 \to \Y_c^{\infty}
\colon (\psib,\nub,\omega,\gamma) \mapsto (\ind_{\X}-\Pi_c)\Phib_c(\nub+\psib,\omega) + \gamma\psib'.
\]
Since $\Pi_c\psib' = 0$ as computed above, and since 
\[
\ip{\psib'}{\nub_0}
= -\ip{\psib}{\nub_0'}
= 0
\]
by integration by parts and the identity $\nub_0'=0$ from \eqref{eqn: nub0}, we do indeed have $\psib' \in \Y_c^{\infty}$ for $\psib \in \X_c^{\infty}$.
That is, $\F_c^{\infty}$ does indeed map into $\Y_c^{\infty}$.
Next, $\F_c^{\infty}(0,0,\omega,\gamma) = 0$ for all $\omega$ and $\gamma$, and $D_{\psib}\F_c^{\infty}(0,\omega_c,0,0) = (\ind_{\X}-\Pi_c)\restr{\L_c[\omega_c]}{\X_c^{\infty}}$ is invertible.
Consequently, by the implicit function theorem, all suitably small solutions to $\F_c^{\infty}(\psib,\nub,\omega,\gamma) = 0$ have the form $\psib = \Psib_c(\nub,\omega,\gamma)$ with $\Psib_c(0,\omega,\gamma) = 0$ for all $\omega$ and $\gamma$.
Before proceeding, we note that the same proof as for part \ref{part: Psib3} of Lemma \ref{lem: Psib} (which did not rely on the gradient structure at all) yields
\begin{equation}\label{eqn: Dnub Psib again}
D_{\nub}\Psib(0,\omega_c,0)
= 0.
\end{equation}

Now we specialize to $\nub = a\nub_1^c$ and solve the finite-dimensional equation \eqref{eqn: LC FD} by studying
\[
\F_c^0(\omega,\gamma,a)
:= \Pi_c\Phib_c(a\nub_1+\Psib_c(a\nub_1^c,\omega,\gamma),\omega)-\gamma{a}\nub_2^c
= 0.
\]
Here we have used $\partial_x\nub_1^c = -\nub_2^c$.
Since $\F_c^0(\omega,\gamma,0) = 0$, as in Section \ref{sec: LS FD} we have the factorization
\[
\F_c^0(\omega,\gamma,a) = a\H_c(\omega,\gamma,a),
\qquad
\H_c(\omega,\gamma,a) := \int_0^1 D_a\F_c^0(\omega,\gamma,a\alpha) \dalpha.
\]
We solve $\H_c(\omega,\gamma,a) = 0$.

First, we compute
\[
D_a\F_c^0(\omega,\gamma,a)
= \Pi_cD_{\phib}\Phib_c(a\nub_1+\Psib_c(a\nub_1^c,\omega,\gamma),\omega)\big(\nub_1^c+D_{\nub}\Psib_c(a\nub_1^c,\omega,\gamma)\nub_1^c\big)-\gamma\nub_2^c.
\]
This, together with \eqref{eqn: Dnub Psib again} and $\Pi_c\L_c[\omega_c] = 0$, implies $\H_c(\omega_c,0,0) = 0$.
Next, differentiate with respect to $(\omega,\gamma) \in \R^2$ and write this derivative as a linear combination of partial derivatives:
\begin{equation}\label{eqn: D-omega-gamma total}
D_{(\omega,\gamma)}\H_c(\omega_c,0,0)(\omega,\gamma)
= \omega{D}_{\omega}\H_c(\omega_c,0,0) + \gamma{D}_{\gamma}\H_c(\omega_c,0,0).
\end{equation}
We compute each of these partial derivatives separately.

For $D_{\omega}\H_c$, we calculate
\[
\H_c(\omega,0,0)
= \Pi_cD_{\phib}\Phib_c(0,\omega)\big(\nub_1^c+D_{\nub}\Psib_c(0,\omega,0)\nub_1^c\big),
\]
and use the product rule and the identities $\Pi_c\L_c[\omega_c] = 0$ and $D_{\nub}\Psib_c(0,\omega_c,0) = 0$ to obtain
\begin{equation}\label{eqn: D-omega only}
D_{\omega}\H_c(\omega_c,0,0)
= \Pi_c\L_c'[\omega_c]\nub_1^c
= \ip{\L_c'[\omega_c]\nub_1^c}{\nub_1^c}\nub_1^c.
\end{equation}

Above we used the following lemma to simplify the projection calculation.

%%-----------------------------------------------------------%%
%%-----------------------------------------------------------%%
\begin{lemma}\label{lem: other transverse}
$\ip{\L_c'[\omega_c]\nub_1^c}{\nub_2^c} = 0$.
\end{lemma}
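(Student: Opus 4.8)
The plan is to reduce everything to Fourier coefficients at $k = 1$ and exploit the explicit form of $\nub_1^c$ and $\nub_2^c$ from \eqref{eqn: nub1} and \eqref{eqn: nub2}, together with the multiplier description \eqref{eqn: Lc-prime} of $\L_c'[\omega_c]$. First I would recall that, by \eqref{eqn: Lc-prime}, $\widehat{\L_c'[\omega_c]\nub_1^c}(k) = k\,\tL_c'(\omega_c k)\,\widehat{\nub_1^c}(k)$, and since $\nub_1^c$ has nonzero Fourier coefficients only at $k = \pm 1$, the inner product $\ip{\L_c'[\omega_c]\nub_1^c}{\nub_2^c}$ collapses (via Parseval on $L_{\per}^2$) to the sum of the two terms at $k = \pm 1$:
\[
\ip{\L_c'[\omega_c]\nub_1^c}{\nub_2^c}
= \tL_c'(\omega_c)\widehat{\nub_1^c}(1)\cdot\overline{\widehat{\nub_2^c}(1)}
- \tL_c'(-\omega_c)\widehat{\nub_1^c}(-1)\cdot\overline{\widehat{\nub_2^c}(-1)}.
\]
From \eqref{eqn: nub shifts}, $\nub_2^c = S^{-\pi/2}\nub_1^c$, so $\widehat{\nub_2^c}(k) = e^{-ik\pi/2}\widehat{\nub_1^c}(k)$; in particular $\widehat{\nub_2^c}(1) = -i\,\widehat{\nub_1^c}(1)$ and $\widehat{\nub_2^c}(-1) = i\,\widehat{\nub_1^c}(-1)$ (consistent with the signs visible in \eqref{eqn: nub2}). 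Substituting, and using that $\nub_1^c$ is real-valued so $\widehat{\nub_1^c}(-1) = \overline{\widehat{\nub_1^c}(1)}$, the expression becomes $i\big(\tL_c'(\omega_c) + \tL_c'(-\omega_c)^{\!*}\big)$ sandwiched appropriately against $\widehat{\nub_1^c}(1)$; the two contributions should be complex conjugates of each other, forcing the sum to be purely imaginary — but the inner product of the real functions $\L_c'[\omega_c]\nub_1^c$ and $\nub_2^c$ is real, hence zero.

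More cleanly, I would run this through the derivative and shift identities directly rather than Fourier coefficients. Using $\nub_2^c = \partial_x\nub_1^c$ is \emph{not} quite right (it is $-\partial_x\nub_1^c = \nub_2^c$ from \eqref{eqn: nub derivatives}); so write $\nub_2^c = -\partial_x\nub_1^c$ and compute
\[
\ip{\L_c'[\omega_c]\nub_1^c}{\nub_2^c}
= -\ip{\L_c'[\omega_c]\nub_1^c}{\partial_x\nub_1^c}
= \ip{\partial_x\L_c'[\omega_c]\nub_1^c}{\nub_1^c}
= \ip{\L_c'[\omega_c]\partial_x\nub_1^c}{\nub_1^c}
= -\ip{\L_c'[\omega_c]\nub_2^c}{\nub_1^c},
\]
where the second equality is integration by parts \eqref{eqn: integration by parts} and the third uses that $\partial_x$ and the Fourier multiplier $\L_c'[\omega_c]$ commute (as already invoked in \eqref{eqn: transverse on nub2}, valid on $H_{\per}^3$, which applies since $\nub_1^c$ is smooth). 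On the other hand, since $\L_c[\omega_c]$ is self-adjoint and $\L_c'[\omega_c] = D_{\phib\omega}\Phib_c(0,\omega_c)$ inherits self-adjointness from the symmetry of the multiplier symbol $\tL_c$ (each $\tD(K)$ and $M$ is Hermitian, and $K \mapsto k\tL_c'(\omega_c k)$ is even in the relevant sense), we also have $\ip{\L_c'[\omega_c]\nub_2^c}{\nub_1^c} = \ip{\nub_2^c}{\L_c'[\omega_c]\nub_1^c} = \ip{\L_c'[\omega_c]\nub_1^c}{\nub_2^c}$ (the last step because both are real). Combining, $\ip{\L_c'[\omega_c]\nub_1^c}{\nub_2^c} = -\ip{\L_c'[\omega_c]\nub_1^c}{\nub_2^c}$, so it vanishes.

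The main obstacle I anticipate is verifying that $\L_c'[\omega_c]$ is genuinely self-adjoint (or at least symmetric on the relevant smooth subspace) — i.e., that the Fourier multiplier symbol $k\,\tL_c'(\omega_c k)$ is Hermitian for each $k$. This requires checking that $\tL_c'(K)$ (the componentwise $K$-derivative of $\tL_c(K) = -c^2K^2 M + \tD(K)$ from \eqref{eqn: tL}, \eqref{eqn: tD}) remains a Hermitian matrix: the $-2c^2 K M$ part is real diagonal, and $\tD'(K)$ has off-diagonal entries $-i(e^{iK} - \kappa e^{-iK})$ and $i(\kappa e^{iK} - e^{-iK})$, which are indeed complex conjugates of one another, so $\tD'(K)$ is Hermitian. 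This is a short but essential computation; once it is in hand, the antisymmetry argument of the previous paragraph closes the proof immediately. (Alternatively, if one prefers to avoid invoking self-adjointness of $\L_c'[\omega_c]$ abstractly, the two-term Fourier computation of the first paragraph is completely self-contained and yields the result by a direct parity/reality check on $\widehat{\nub_1^c}(1)$ and $\tL_c'(\pm\omega_c)$.)
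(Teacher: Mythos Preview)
Your proposal is correct, and it lines up closely with the paper's own two proofs in Appendix~\ref{app: other transverse}. Your Fourier sketch (first paragraph) is the paper's ``direct calculation'' proof: there one writes $\ip{\L_c'[\omega_c]\nub_1^c}{\nub_2^c} = 2\re\big[i\big(\tL_c'(\omega_c)\hat{\nub_1^c}(1)\cdot\hat{\nub_1^c}(1)\big)\big]$ via $\hat{\nub_2^c}(1) = -i\hat{\nub_1^c}(1)$ and then invokes the reality of $\tL_c'(\omega_c)\hat{\nub_1^c}(1)\cdot\hat{\nub_1^c}(1)$ established in \eqref{eqn: transversality real}---which is exactly the Hermitian-symbol fact you verify in your third paragraph. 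Your antisymmetry argument (second paragraph) is a clean repackaging of the same Hermitian/self-adjoint content and is morally the gradient-formulation proof: the paper shows $\ip{\L_c[\omega]\phib}{\phib'}=0$ and differentiates in $\omega$, whereas you go straight to $\L_c'[\omega_c]$ and use self-adjointness plus $[\partial_x,\L_c'[\omega_c]]=0$ to force the inner product to equal its own negative. Both routes ultimately rest on the same computation that $\tL_c'(K)$ is Hermitian, so there is no substantive gap between your argument and the paper's.
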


We give two proofs of this lemma in Appendix \ref{app: other transverse}, one using the gradient formulation, and one using directly the definitions of $\L_c'[\omega_c]$, $\nub_1^c$, and $\nub_2^c$.

Next, we work on $D_{\gamma}\H_c$.
Since $\Psib_c(0,\omega,\gamma) = 0$ for all $\omega$ and $\gamma$, we have
\[
\H_c(\omega_c,\gamma,0)
= \Pi_c\L_c[\omega_c]\big(\nub_1^c+D_{\nub}\Psib_c(0,\omega_c,\gamma)\nub_1^c\big)-\gamma\nub_2
= -\gamma\nub_2,
\]
thanks to $\Pi_c\L_c[\omega_c] = 0$ once again.
Thus
\begin{equation}\label{eqn: D-gamma only}
D_{\gamma}\H_c(\omega_c,0,0)
= -\nub_2^c.
\end{equation}

We combine \eqref{eqn: D-omega-gamma total}, \eqref{eqn: D-omega only}, and \eqref{eqn: D-gamma only} to find
\[
D_{(\omega,\gamma)}\H_c(\omega_c,0,0)(\omega,\gamma)
= \omega\ip{\L_c'[\omega_c]\nub_1^c}{\nub_1^c}\nub_1^c - \gamma\nub_2^c.
\]
Since $\ip{\L_c'[\omega_c]\nub_1^c}{\nub_1^c} \ne 0$ by Corollary \ref{cor: transversality}, and since $\nub_1^c$ and $\nub_2^c$ form a basis for $\Zcal_c$, we conclude that $D_{(\omega,\gamma)}\H_c(\omega_c,0,0)$ is an invertible linear operator from $\R^2$ to $\Zcal_c$.
By the implicit function theorem, for suitably small $\omega$, $\gamma$, and $a$, we can solve $\H_c(\omega,\gamma,a) = 0$ with $\omega = \Omega_c(a)$ and $\gamma = \Gamma_c(a)$ for some maps $\Omega_c$ and $\Gamma_c$ with $\Omega_c(0) = \omega_c$ and $\Gamma_c(0) = 0$.

It follows that taking
\[
\phib_c^a := a\nub_1^c + \Psib_c(a\nub_1^c,\Omega_c(a),\Gamma_c(a))
\quadword{and}
\omega_c^a := \Omega_c(a)
\]
solves $\Phib_c(\phib_c^a,\omega_c^a) + \Gamma_c(a)\phib' = 0$.
Since $\hat{\phib_c^a}(\pm1) \ne 0$, $\phib_c^a$ is nonconstant, and so by the calculation in \eqref{eqn: why LC works} we really have $\Gamma_c(a) = 0$ for all $a$.
Additionally, if we put $\psib_c(a) = \Psib_c(a\nub_1^c,\Omega_c(a),0)$, then $\psib_c(0) = 0$, and so
\[
\phib_c^a
= a\left(\nub_1^c + \int_0^1 D_a\psib_c(a\alpha) \dalpha\right),
\]
which is the representation that we want.
This concludes our second proof of Theorem \ref{thm: main}.

%%-----------------------------------------------------------%%
%%-----------------------------------------------------------%%
%%-----------------------------------------------------------%%
%%-----------------------------------------------------------%%
%%-----------------------------------------------------------%%
\section{Periodic Solutions with Symmetry}\label{sec: with symmetry}

We first work out an abstract notion of symmetry and quickly show how bifurcation unfolds in its presence.
Then we prove that mass and spring dimers actually possess such symmetries.
The point of this analysis is that when the lattice has a symmetry, the periodic traveling wave solutions can be chosen to respect that symmetry.

%%-----------------------------------------------------------%%
%%-----------------------------------------------------------%%
%%-----------------------------------------------------------%%
%%-----------------------------------------------------------%%
\subsection{Symmetry operators and their properties}

%%-----------------------------------------------------------%%
%%-----------------------------------------------------------%%
\begin{definition}\label{defn: S}
A bounded linear operator $\Scal \colon L_{\per}^2(\R^2) \to L_{\per}^2(\R^2)$ is a \defn{symmetry}  if the following hold.

%%-----------------------------------------------------------%%
\begin{enumerate}[label={\bf(\roman*)}, ref={(\roman*)}]

%%-----------------------------------------------------------%%
\item\label{part: S1}
$\G_c(\Scal\phib,\omega) = \G_c(\phib,\omega)$ for all $\phib \in H_{\per}^2(\R^2)$ and any $c \in \R$, where $\G_c$ is defined in \eqref{eqn: Gc}.

%%-----------------------------------------------------------%%
\item\label{part: S2}
$\Scal^2\phib = \phib$ for all $\phib \in L_{\per}^2(\R^2)$.

%%-----------------------------------------------------------%%
\item\label{part: S3}
$\ip{\Scal\phib}{\etab} = \ip{\phib}{\Scal\etab}$ for all $\phib$, $\etab \in L_{\per}^2(\R^2)$.

%%-----------------------------------------------------------%%
\item\label{part: S4}
$\partial_x\Scal\phib = -\Scal\phib'$ for all $\phib \in H_{\per}^2(\R^2)$.
\end{enumerate}
\end{definition}

We point out that while shift operators $S^d$ do satisfy the invariance property \ref{part: S1} above, and while $S^{\pm\pi}$ also satisfies \ref{part: S2}, shifts in general do not meet \ref{part: S3} and \ref{part: S4}.
The symmetries that we construct will not rely on shift operators.

Here are some useful properties of symmetries for our problem.

%%-----------------------------------------------------------%%
%%-----------------------------------------------------------%%
\begin{lemma}\label{lem: Scal}
Let $\Scal$ be a symmetry.

%%-----------------------------------------------------------%%
\begin{enumerate}[label={\bf(\roman*)}, ref={(\roman*)}]

%%-----------------------------------------------------------%%
\item\label{part: Scal1}
$\Phib_c(\Scal\phib,\omega) = \Scal\Phib_c(\phib,\omega)$ for all $\phib \in H_{\per}^2(\R^2)$ and $\omega \in \R$.

%%-----------------------------------------------------------%%
\item\label{part: Scal2}
$\L_c[\omega]\Scal = \Scal\L_c[\omega]$ and $\L_c'[\omega]\Scal = \Scal\L_c'[\omega]$ for all $\omega$.

%%-----------------------------------------------------------%%
\item\label{part: Scal3}
$\Scal\nub_1^c = \pm\nub_1^c$ if and only if $\Scal\nub_2^c = \mp\nub_2^c$.
\end{enumerate}
\end{lemma}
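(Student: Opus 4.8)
The plan is to prove the three claims in sequence, each building on the preceding ones, and to use throughout the structural identities for symmetries in Definition \ref{defn: S} together with the gradient formulation from Theorem \ref{thm: G}.

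For part \ref{part: Scal1}, the idea mirrors the proof of shift invariance in Corollary \ref{cor: shift}\ref{part: Phib shift invariant}. Starting from the invariance $\G_c(\Scal\phib,\omega) = \G_c(\phib,\omega)$ of Definition \ref{defn: S}\ref{part: S1}, I differentiate in $\phib$ via the chain rule to get $D_\phib\G_c(\Scal\phib,\omega)\Scal\etab = D_\phib\G_c(\phib,\omega)\etab$ for all $\etab \in H_{\per}^2(\R^2)$. Translating to gradients via \eqref{eqn: Phib Frechet} gives $\ip{\Phib_c(\Scal\phib,\omega)}{\Scal\etab} = \ip{\Phib_c(\phib,\omega)}{\etab}$. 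Now I use the self-adjointness of $\Scal$ from Definition \ref{defn: S}\ref{part: S3} to move $\Scal$ onto the first factor, obtaining $\ip{\Scal\Phib_c(\Scal\phib,\omega)}{\etab} = \ip{\Phib_c(\phib,\omega)}{\etab}$ for all $\etab$, hence $\Scal\Phib_c(\Scal\phib,\omega) = \Phib_c(\phib,\omega)$. Applying $\Scal$ to both sides and using the involution property $\Scal^2 = \ind$ from Definition \ref{defn: S}\ref{part: S2} yields $\Phib_c(\Scal\phib,\omega) = \Scal\Phib_c(\phib,\omega)$.

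Part \ref{part: Scal2} follows by differentiating part \ref{part: Scal1}. Since $\L_c[\omega] = D_\phib\Phib_c(0,\omega)$ and $\Scal 0 = 0$ (as $\Scal$ is linear), differentiating $\Phib_c(\Scal\phib,\omega) = \Scal\Phib_c(\phib,\omega)$ with respect to $\phib$ at $\phib = 0$ and using linearity of $\Scal$ gives $D_\phib\Phib_c(0,\omega)\Scal = \Scal D_\phib\Phib_c(0,\omega)$, i.e.\ $\L_c[\omega]\Scal = \Scal\L_c[\omega]$; here I should note that this identity then holds for all $\omega$, and that the regularity in Lemma \ref{lem: Phib regularity} justifies the differentiation. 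Differentiating this operator identity once more in $\omega$ (legitimate since $D_\phib\Phib_c \in \Cal^1$ in $\omega$ by Lemma \ref{lem: Phib regularity}) and noting $\Scal$ is $\omega$-independent gives $\L_c'[\omega]\Scal = \Scal\L_c'[\omega]$.

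For part \ref{part: Scal3}, I first observe that $\Scal$ preserves the kernel $\Zcal_c = \spn(\nub_1^c,\nub_2^c)$: if $\L_c[\omega_c]\phib = 0$ then by part \ref{part: Scal2}, $\L_c[\omega_c]\Scal\phib = \Scal\L_c[\omega_c]\phib = 0$. So $\Scal$ restricts to a self-adjoint involution on the two-dimensional space $\Zcal_c$. The derivative identities \eqref{eqn: nub derivatives} give $\nub_2^c = -\partial_x\nub_1^c$, and the anticommutation property of Definition \ref{defn: S}\ref{part: S4}, $\partial_x\Scal = -\Scal\partial_x$ on $H_{\per}^2$, then forces $\Scal\nub_2^c = -\Scal\partial_x\nub_1^c = \partial_x\Scal\nub_1^c$. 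Hence if $\Scal\nub_1^c = \pm\nub_1^c$, then $\Scal\nub_2^c = \partial_x(\pm\nub_1^c) = \mp\nub_2^c$ using \eqref{eqn: nub derivatives} again; the converse is identical with the roles swapped (using $\partial_x\nub_2^c = \nub_1^c$). I expect the only mild subtlety — the main obstacle, such as it is — to be bookkeeping in part \ref{part: Scal3}: one must be slightly careful that $\Scal$ acting on $\Zcal_c$ need not a priori be diagonal in the $(\nub_1^c,\nub_2^c)$ basis, so the biconditional should be read as "for a given sign choice on $\nub_1^c$," and the cleanest route is exactly the $\partial_x$-anticommutation computation above, which sidesteps any spectral-theoretic discussion of $\restr{\Scal}{\Zcal_c}$.
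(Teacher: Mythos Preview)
Your proposal is correct and follows essentially the same approach as the paper: part \ref{part: Scal1} via differentiating the invariance $\G_c(\Scal\phib,\omega)=\G_c(\phib,\omega)$ and using self-adjointness plus $\Scal^2=\ind$, part \ref{part: Scal2} by further differentiation in $\phib$ and $\omega$, and part \ref{part: Scal3} via the anticommutation $\partial_x\Scal=-\Scal\partial_x$ together with the derivative identities \eqref{eqn: nub derivatives}. Your additional remarks about $\Scal$ preserving $\Zcal_c$ and the regularity from Lemma \ref{lem: Phib regularity} are harmless extras that the paper omits.
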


\begin{proof}

%%-----------------------------------------------------------%%
\begin{enumerate}[label={\bf(\roman*)}]

%%-----------------------------------------------------------%%
\item
Since $\G_c(\Scal\phib,\omega) = \G_c(\phib,\omega)$ for all $\phib \in H_{\per}^2(\R^2)$ and $\omega \in \R$, we differentiate with respect to $\phib$ and use the chain rule (much as we did in the proof of part \ref{part: Phib shift invariant} of Corollary \ref{cor: shift}) to find
\[
D_{\phib}\G_c(\phib,\omega)\etab
= D_{\phib}\G_c(\Scal\phib,\omega)\Scal\etab
\]
for all $\etab \in H_{\per}^2(\R^2)$.
Using the gradient formulation, this reads
\[
\ip{\Phib_c(\phib,\omega)}{\etab}
= \ip{\Phib_c(\Scal\phib,\omega)}{\Scal\etab}
= \ip{\Scal\Phib_c(\Scal\phib,\omega)}{\etab},
\]
where the second equality is the adjoint property of $\Scal$.
Since this is true for all $\etab \in H_{\per}^2(\R^2)$, we have $\Scal\Phib_c(\Scal\phib,\omega) = \Phib_c(\phib,\omega)$.

%%-----------------------------------------------------------%%
\item
This follows from part \ref{part: Scal1} and the chain rule.

%%-----------------------------------------------------------%%
\item
We use the relations $\partial_x\nub_1^c = -\nub_2^c$ and $\partial_x\nub_2^c = \nub_1^c$ from Corollary \ref{cor: eigenfunctions}.
If $\Scal\nub_1^c = \pm\nub_1^c$, then
\[
\Scal\nub_2^c
= -\Scal\partial_x\nub_1^c
= \partial_x\Scal\nub_1^c
= \pm\partial_x\nub_1^c
= \mp\partial_x\nub_2^c.
\]
Conversely, if $\Scal\nub_2^c = \mp\nub_2^c$, then
\[
\Scal\nub_1^c
= \Scal\partial_x\nub_2^c
= -\partial_x\Scal\nub_2^c
= -(\mp\partial_x\nub_2^c)
= \pm\partial_x\nub_2^c
= \pm\nub_1^c.
\qedhere
\]
\end{enumerate}
\end{proof}

Now we adapt the nonconstant eigenfunctions $\nub_1^c$ and $\nub_2^c$ from Corollary \ref{cor: eigenfunctions} so that they respect symmetry.

%%-----------------------------------------------------------%%
%%-----------------------------------------------------------%%
\begin{lemma}\label{lem: nub-pm}
Let $\Scal$ be a symmetry and define
\[
\nub_+^c 
:= \begin{cases}
\nub_1^c, \ \Scal\nub_1^c = \nub_1^c \\
\nub_2^c, \ \Scal\nub_1^c = -\nub_1^c \\
(\nub_1^c + \Scal\nub_1^c)/\norm{\nub_1^c+\Scal\nub_1^c}_{L_{\per}^2}, \ \Scal\nub_1^c \ne \pm\nub_1^c 
\end{cases}
\]
and
\[
\nub_-^c 
:= \begin{cases}
\nub_2^c, \ \Scal\nub_1^c = \nub_1^c \\
\nub_1^c, \ \Scal\nub_1^c = -\nub_1^c \\
(\nub_2^c - \Scal\nub_2^c)/\norm{\nub_2^c-\Scal\nub_2^c}_{L_{\per}^2}, \ \Scal\nub_1^c \ne \pm\nub_1^c.
\end{cases}
\]

%%-----------------------------------------------------------%%
\begin{enumerate}[label={\bf(\roman*)}, ref={(\roman*)}]

%%-----------------------------------------------------------%%
\item
$\Scal\nub_+^c = \nub_+^c$ and $\Scal\nub_-^c = \nub_-^c$.

%%-----------------------------------------------------------%%
\item
The vectors $\nub_+^c$ and $\nub_-^c$ form an orthonormal basis for $\Zcal_c$ as defined in \eqref{eqn: Zcal-c}.

%%-----------------------------------------------------------%%
\item\label{part: nub pm3}
$\inf_{|c| > c_{\star}} \ip{\L_c'[\omega_c]\nub_+^c}{\nub_+^c} > 0$.
\end{enumerate}

\end{lemma}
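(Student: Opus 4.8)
The plan is a case analysis along the three branches in the definitions of $\nub_+^c$ and $\nub_-^c$, drawing on the structural facts in Lemma \ref{lem: Scal} and Corollary \ref{cor: eigenfunctions}. First I would record a preliminary observation: $\Scal$ restricts to a self-adjoint involution of the two-dimensional space $\Zcal_c$. The involution property and self-adjointness are immediate from parts \ref{part: S2} and \ref{part: S3} of Definition \ref{defn: S}; for the restriction, part \ref{part: Scal2} of Lemma \ref{lem: Scal} gives that $\Scal$ commutes with $\L_c[\omega_c]$ and hence preserves $\ker(\L_c[\omega_c]) = \spn(\nub_0,\nub_1^c,\nub_2^c)$, while property \ref{part: S4} forces $\Scal\nub_0$ to be constant, so $\Scal\nub_0 = \pm\nub_0$ (using that $\Scal$ is an isometry), whence $\ip{\Scal\nub_j^c}{\nub_0} = \ip{\nub_j^c}{\Scal\nub_0} = 0$ and $\Scal\nub_1^c,\Scal\nub_2^c \in \Zcal_c$.

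For (i), the first two branches are immediate: if $\Scal\nub_1^c = \pm\nub_1^c$ then the displayed $\nub_+^c$ is fixed by hypothesis, and part \ref{part: Scal3} of Lemma \ref{lem: Scal} shows the displayed $\nub_-^c$ is sent to its negative. In the third branch ($\Scal\nub_1^c \ne \pm\nub_1^c$), I would invoke $\Scal^2 = \ind$: $\Scal(\nub_1^c+\Scal\nub_1^c) = \nub_1^c+\Scal\nub_1^c$ and $\Scal(\nub_2^c-\Scal\nub_2^c) = -(\nub_2^c-\Scal\nub_2^c)$, so $\nub_+^c$ and $\nub_-^c$ lie in, respectively, the $+1$ and $-1$ eigenspaces of $\restr{\Scal}{\Zcal_c}$. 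The only bookkeeping is that the normalizers do not vanish: $\nub_1^c+\Scal\nub_1^c = 0$ would force $\Scal\nub_1^c=-\nub_1^c$, while $\nub_2^c-\Scal\nub_2^c=0$ would force $\Scal\nub_2^c=\nub_2^c$ and hence, by part \ref{part: Scal3} of Lemma \ref{lem: Scal}, $\Scal\nub_1^c=-\nub_1^c$; both contradict the branch hypothesis.

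Part (ii) then follows in a line: $\nub_+^c$ and $\nub_-^c$ are unit vectors in $\Zcal_c$ by construction, and by (i) they are eigenvectors of the self-adjoint operator $\restr{\Scal}{\Zcal_c}$ for the distinct eigenvalues $+1$ and $-1$, hence orthogonal; two orthonormal vectors span the two-dimensional $\Zcal_c$. For (iii), the plan is to show that $\ip{\L_c'[\omega_c]\nub_+^c}{\nub_+^c} = \ip{\L_c'[\omega_c]\nub_1^c}{\nub_1^c}$ in every branch. In the first two this is either trivial or is exactly the identity \eqref{eqn: transverse on nub2}; in the third, I would write $\nub_+^c$ as a unit-length combination of $\nub_1^c$ and $\nub_2^c$ and expand the quadratic form, using that the off-diagonal terms $\ip{\L_c'[\omega_c]\nub_1^c}{\nub_2^c}$ and $\ip{\L_c'[\omega_c]\nub_2^c}{\nub_1^c}$ vanish by Lemma \ref{lem: other transverse} (the second after transferring a derivative via integration by parts, since $\partial_x$ and $\L_c'[\omega_c]$ commute) and that the two diagonal terms coincide by \eqref{eqn: transverse on nub2}. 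The resulting uniform bound away from zero, with the asserted sign, is then Corollary \ref{cor: transversality} together with the transversality estimate \eqref{eqn: omegac transversality}.

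The argument is mostly bookkeeping; the only spots that genuinely need care are the third branch of the definition — checking the normalizers are nonzero and that the off-diagonal contributions to the form in (iii) really drop out — and the preliminary step that $\Scal$ restricts to $\Zcal_c$ rather than to a larger $\Scal$-invariant subspace of $\ker(\L_c[\omega_c])$.
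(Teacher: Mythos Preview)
Your proof is correct and follows the paper's case analysis, with one clean improvement in part (ii). In the third branch the paper first verifies $\nub_{\pm}^c \in \Zcal_c$ by computing $\ip{\Scal\nub_j^c}{\nub_0}=0$ via an ad hoc integration by parts (writing $\Scal\nub_1^c = \Scal\partial_x\nub_2^c = -\partial_x\Scal\nub_2^c$ and moving $\partial_x$ onto $\nub_0$), and then checks orthogonality by expanding $\ip{\nub_1^c+\Scal\nub_1^c}{\nub_2^c-\Scal\nub_2^c}$ term by term using $\Scal^*=\Scal$ and $\Scal^2=\ind$. Your preliminary observation that $\Scal$ restricts to a self-adjoint involution of $\Zcal_c$ handles both at once: membership in $\Zcal_c$ is automatic, and orthogonality is simply the fact that eigenvectors of a self-adjoint operator for distinct eigenvalues are orthogonal. (Your step $\Scal\nub_0 = \pm\nub_0$ tacitly uses that the only constants in $\ker(\L_c[\omega_c])$ are multiples of $\nub_0$, which is clear since $\nub_1^c$ and $\nub_2^c$ are supported on Fourier modes $k=\pm1$.) For (iii) your expansion of the quadratic form is exactly the paper's argument. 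One incidental note: in (i) you correctly prove $\Scal\nub_-^c = -\nub_-^c$ rather than the identity as literally printed; this is what the construction gives and what is actually used later, e.g.\ in \eqref{eqn: symm ortho to nub-m}.
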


\begin{proof}
We first remark that part \ref{part: Scal3} of Lemma \ref{lem: Scal} ensures that $\nub_{\pm}^c$ is defined in the third case of $\Scal\nub_1^c \ne \pm \nub_1^c$: if $\Scal\nub_1^c \ne \pm\nub_1^c$, then also $\Scal\nub_2^c \ne \pm\nub_2^c$, and so both $\nub_1^c + \Scal\nub_1^c$ and $\nub_2^c - \Scal\nub_2^c$ are nonzero.

%%-----------------------------------------------------------%%
\begin{enumerate}[label={\bf(\roman*)}]

%%-----------------------------------------------------------%%
\item
This is a direct calculation.

%%-----------------------------------------------------------%%
\item
This is obvious in the cases $\Scal\nub_1^c = \pm\nub_1^c$.
In the third case, we use part \ref{part: Scal2} of Lemma \ref{lem: Scal} to compute
\[
\L_c[\omega_c]\Scal\nub_1^c
= \Scal\L_c[\omega_c]\nub_1^c
= 0
\]
and likewise $\L_c[\omega_c]\Scal\nub_2^c = 0$.
This shows $\nub_{\pm}^c \in \ker(\L_c[\omega_c])$.
Next,
\[
\ip{\Scal\nub_1^c}{\nub_0}
= \ip{\Scal\partial_x\nub_2^c}{\nub_0}
= -\ip{\partial_x\Scal\nub_2^c}{\nub_0}
= \ip{\Scal\nub_2^c}{\partial_x\nub_0}
= 0
\]
and likewise $\ip{\Scal\nub_2^c}{\nub_0} = 0$.
This shows $\nub_{\pm}^c \in \Zcal_c$.

For orthogonality, we compute
\begin{equation}\label{eqn: nub pm aux}
\ip{\nub_1^c+\Scal\nub_1^c}{\nub_2^c-\Scal\nub_2^c}
= \ip{\nub_1^c}{\nub_2^c} - \ip{\nub_1^c}{\Scal\nub_2^c} + \ip{\Scal\nub_1^c}{\nub_2^c}-\ip{\Scal\nub_1^c}{\Scal\nub_2^c}.
\end{equation}
Now we use properties of $\Scal$ to rewrite
\[
\ip{\nub_1^c}{\Scal\nub_2^c} = \ip{\Scal\nub_1^c}{\nub_2^c}
\quadword{and}
\ip{\Scal\nub_1^c}{\Scal\nub_2^c} = \ip{\Scal^2\nub_1^c}{\nub_2^c} = \ip{\nub_1^c}{\nub_2^c}.
\]
From this and \eqref{eqn: nub pm aux}, we obtain $\ip{\nub_1^c+\Scal\nub_1^c}{\nub_2^c-\Scal\nub_2^c} = 0$.
Since $\Zcal_c$ is already two-dimensional, it follows from orthogonality and linear independence that $\nub_+^c$ and $\nub_-^c$ are a basis.

%%-----------------------------------------------------------%%
\item
The first case that $\Scal\nub_1^c=\nub_1^c$ is Corollary \ref{cor: transversality}.
The second case that $\Scal\nub_1^c = -\nub_1^c$ is equivalent to $\Scal\nub_2^c = \nub_2^c$ by part \ref{part: Scal3} of Lemma \ref{lem: Scal}, and then we can use the calculation in \eqref{eqn: transverse on nub2}.
For the third case that $\Scal\nub_1^c \ne \pm \nub_1^c$, we start by taking $\nub \in \Zcal_c = \spn(\nub_1^c,\nub_2^c)$ and then computing via the orthonormality of $\nub_1^c$ and $\nub_2^c$, \eqref{eqn: transverse on nub2}, and Lemma \ref{lem: other transverse} that
\[
\ip{\L_c'[\omega_c]\nub}{\nub}
= \norm{\nub}_{L_{\per}^2}^2\ip{\L_c'[\omega_c]\nub_1^c}{\nub_1^c}.
\]
With $\nub = \nub_+^c$, we have $\nub_+^c \in \spn(\nub_1^c,\nub_2^c)$ and $\norm{\nub_+^c}_{L_{\per}^2} = 1$, so
\[
\ip{\L_c'[\omega_c]\nub_+^c}{\nub_+^c}
= \ip{\L_c'[\omega_c]\nub_1^c}{\nub_1^c},
\]
from which the positive infimum follows.
\qedhere
\end{enumerate}
\end{proof}

%%-----------------------------------------------------------%%
%%-----------------------------------------------------------%%
%%-----------------------------------------------------------%%
%%-----------------------------------------------------------%%
\subsection{Bifurcation in the presence of symmetry}
Let $\Scal$ be a symmetry and define
\[
\Y_{\Scal} := \set{\phib \in L_{\per}^2(\R^2)}{\ip{\phib}{\nub_0} = 0 \text{ and } \Scal\phib = \phib}
\quadword{and}
\X_{\Scal} := H_{\per}^2(\R^2) \cap \Y_{\Scal}.
\]
Part \ref{part: Scal1} of Lemma \ref{lem: Scal} shows that $\Phib_c(\phib,\omega) \in \Y_{\Scal}$ for each $\phib \in \X_{\Scal}$ and $\omega \in \R$.
The effect of restricting $\Phib_c$ to map from $\X_{\Scal} \times \R$ to $\Y_{\Scal}$ is that the restriction $\restr{\L_c[\omega_c]}{\X_{\Scal}}$ now has a one-dimensional kernel and cokernel.
This, along with the transversality condition from part \ref{part: nub pm3} of Lemma \ref{lem: nub-pm}, puts us in a position to use the classical Crandall--Rabinowitz--Zeidler theorem directly, without the work in Sections \ref{sec: LS FD} or \ref{sec: LC} to manage the extra finite-dimensional equation.

%%-----------------------------------------------------------%%
%%-----------------------------------------------------------%%
\begin{remark}
While the Crandall--Rabinowitz--Zeidler theorem is often used to solve a problem of the form $F(x,\lambda) = 0$ with $F$ twice continuously differentiable, this regularity is not strictly necessary; the proof in \cite[Thm.\@ 1.5.1]{kielhofer} really hinges on having $F$ and $F_x$ once continuously differentiable.
This allows us to avoid the annoying insufficient regularity in the frequency parameter $\omega$ in our problem; recall Lemma \ref{lem: Phib regularity}.
\end{remark}

More precisely, we know that the three vectors $\nub_0$, $\nub_+^c$, and $\nub_-^c$ form an orthonormal basis for $\ker(\L_c[\omega_c])$ and $\ker(\L_c[\omega_c]^*)$; now suppose that $\phib \in \Y_{\Scal} \cap \spn(\nub_0,\nub_+^c,\nub_-^c)$.
Then by orthonormality
\[
\phib
= \ip{\phib}{\nub_0}\nub_0 + \ip{\phib}{\nub_+^c}\nub_+^c + \ip{\phib}{\nub_-^c}\nub_-^c.
\]
By definition of $\Y_{\Scal}$, we already have $\ip{\phib}{\nub_0} = 0$, and now we compute
\begin{equation}\label{eqn: symm ortho to nub-m}
\ip{\phib}{\nub_-^c}
= \ip{\Scal\phib}{\nub_-^c}
= \ip{\phib}{\Scal\nub_-^c}
= -\ip{\phib}{\nub_-^c}.
\end{equation}
Thus $\ip{\phib}{\nub_-^c} = 0$, and so $\phib \in \spn(\nub_+^c)$.
This proves our claim above that $\nub_+^c$ spans both the kernel and cokernel of $\L_c[\omega_c]$.

Alternatively, we could follow the bifurcation argument in Sections \ref{sec: LS ID} and \ref{sec: LS FD} and replace $\nub_1^c$ with $\nub_+^c$ and $\nub_2^c$ with $\nub_-^c$.
The only change would be the new version of the finite-dimensional problem \eqref{eqn: LS FD}
\begin{subnumcases}{}
\ip{\Phib_c(\phib,\omega)}{\nub_+^c} = 0 \label{eqn: ip nub+} \\
\ip{\Phib_c(\phib,\omega)}{\nub_-^c} = 0. \label{eqn: ip nub-}
\end{subnumcases}
By a calculation similar to \eqref{eqn: symm ortho to nub-m}, we always have \eqref{eqn: ip nub-}.
Specifically, for $\phib \in \X_{\Scal}$, we have
\[
\ip{\Phib_c(\phib,\omega)}{\nub_-^c}
= \ip{\Phib_c(\Scal\phib,\omega)}{\nub_-^c}
= \ip{\Scal\Phib_c(\phib,\omega)}{\nub_-^c}
= \ip{\Phib_c(\phib,\omega)}{\Scal\nub_-^c}
= -\ip{\Phib_c(\phib,\omega)}{\nub_-^c},
\]
thus $\ip{\Phib_c(\phib,\omega)}{\nub_-^c} = 0$ regardless of the form of $\phib \in \X_{\Scal}$.
(This is actually a stronger result than our managing of the second finite-dimensional equation \eqref{eqn: LS 2} in Section \ref{sec: LS FD}, as here there are no restrictions on the form of $\phib$.)
Last, we can solve \eqref{eqn: ip nub+} using the transversality condition from part \ref{part: nub pm3} of Lemma \ref{lem: nub-pm}, exactly as we did \eqref{eqn: LS 2} in Section \ref{sec: LS FD}.
The major difference in the results here is that the solutions $\phib$ now respect the symmetry.

%%-----------------------------------------------------------%%
%%-----------------------------------------------------------%%
%%-----------------------------------------------------------%%
%%-----------------------------------------------------------%%
\subsection{Existence of symmetries for the mass and spring dimers}
We will build the symmetries primarily on  a ``reflection'' operator and a ``flip'' operator.

%%-----------------------------------------------------------%%
%%-----------------------------------------------------------%%
\begin{lemma}\label{lem: R}
The operator
\begin{equation}\label{eqn: R}
(R\phib)(x)
:= \phib(-x)
\end{equation}
has the following properties.

%%-----------------------------------------------------------%%
\begin{enumerate}[label={\bf(\roman*)}, ref={(\roman*)}]

%%-----------------------------------------------------------%%
\item\label{part: R1}
$R\partial_x = -\partial_xR$.

%%-----------------------------------------------------------%%
\item\label{part: R2}
$RS^{\theta} = S^{-\theta}R$ for all $\theta \in \R$.

%%-----------------------------------------------------------%%
\item\label{part: R3}
$\ip{R\etab}{R\phib} = \ip{\phib}{\etab}$ for all $\phib$, $\etab \in L_{\per}^2(\R^2)$.
\end{enumerate}
\end{lemma}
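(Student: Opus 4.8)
The plan is to verify all three identities by direct computation from the definition $(R\phib)(x) = \phib(-x)$, exploiting that $R$ is simply the spatial-reflection operator, a bounded linear involution on $L_{\per}^2(\R^2)$ that restricts to each periodic Sobolev space. Each part reduces to a one-line pointwise comparison or a single change of variables, so I would present the three verifications in sequence.

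For \ref{part: R1} I would evaluate both sides at a point: on one hand $(R\partial_x\phib)(x) = (\partial_x\phib)(-x) = \phib'(-x)$, while on the other $-(\partial_x R\phib)(x) = -\tfrac{d}{dx}\big[\phib(-x)\big] = \phib'(-x)$ by the chain rule. Since these agree for a.e.\ $x$, we get $R\partial_x = -\partial_x R$ as operators $H_{\per}^{s+1}(\R^2) \to H_{\per}^s(\R^2)$ for any $s \ge 0$. For \ref{part: R2} I would again compare pointwise: $(RS^{\theta}\phib)(x) = (S^{\theta}\phib)(-x) = \phib(-x+\theta)$, while $(S^{-\theta}R\phib)(x) = (R\phib)(x-\theta) = \phib(-(x-\theta)) = \phib(-x+\theta)$, so the identity holds for every $x$ and every $\theta \in \R$.

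For \ref{part: R3} I would perform the substitution $x \mapsto -x$ in the integral defining the $L_{\per}^2$-pairing. After the substitution the integrand built from $R\etab$ and $R\phib$ becomes exactly the integrand built from $\etab$ and $\phib$; since that integrand is $2\pi$-periodic, the reflected interval of integration can be shifted back to $[-\pi,\pi]$ without changing the value, yielding $\ip{R\etab}{R\phib} = \ip{\etab}{\phib}$, and then $\ip{\etab}{\phib} = \ip{\phib}{\etab}$ by the symmetry of the $L_{\per}^2$-pairing (cf.\ Appendix \ref{app: per sob space}).

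I do not expect any genuine obstacle here — the lemma is pure bookkeeping — and the only point meriting a moment's care is the use of $2\pi$-periodicity in \ref{part: R3}, which is precisely what guarantees that $R$ maps $L_{\per}^2(\R^2)$ isometrically onto itself rather than onto reflected translates. I would also record in passing the immediate companions $R^2 = \ind_{L_{\per}^2(\R^2)}$ and $\ip{R\phib}{\etab} = \ip{\phib}{R\etab}$ (the latter from the same change of variables), since these are convenient when assembling the actual mass- and spring-dimer symmetries from $R$ and the flip operator.
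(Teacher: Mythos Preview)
Your proposal is correct and matches the paper's own proof essentially line for line: part \ref{part: R1} by the chain rule, part \ref{part: R2} by the same pointwise shift computation, and part \ref{part: R3} by the substitution $x \mapsto -x$. The extra remarks you record ($R^2 = \ind$ and self-adjointness of $R$) are accurate and harmless additions.
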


\begin{proof}

%%-----------------------------------------------------------%%
\begin{enumerate}[label={\bf(\roman*)}]

%%-----------------------------------------------------------%%
\item
This follows from the chain rule.

%%-----------------------------------------------------------%%
\item
We compute
\[
(RS^{\theta}\phib)(x)
= (S^{\theta}\phib)(-x)
= \phib(-x+\theta)
= \phib(-(x-\theta))
= (R\phib)(x-\theta)
= (S^{-\theta}R\phib)(x).
\]

%%-----------------------------------------------------------%%
\item
This follows from substitution.
\qedhere
\end{enumerate}
\end{proof}

We will also use the ``flip'' operator
\begin{equation}\label{eqn: J}
J
:= \begin{bmatrix*}
0 &1 \\
1 &0
\end{bmatrix*},
\end{equation}
which commutes with $R$.
These reflection and flip operators also appeared in the manifestation of symmetries for spatial dynamics coordinates \cite[Sec.\@ 3.2]{faver-hupkes}.

%%-----------------------------------------------------------%%
%%-----------------------------------------------------------%%
%%-----------------------------------------------------------%%
%%-----------------------------------------------------------%%
\subsubsection{Symmetry in the mass dimer}
The mass dimer symmetry is 
\[
\Scal_{\Mb}\phib
:= -R\phib.
\]
The subscript here is meant to emphasize the role of the mass ratio $m = w^{-1}$ in the mass dimer analysis.

We show that $\Scal_{\Mb}$ satisfies property \ref{part: S1} of Definition \ref{defn: S}; all of the other properties of symmetries are quick and direct calculations.
That is, we show
\[
\G_c(\Scal_{\Mb}\phib,\omega)
= \G_c(\phib,\omega)
\]
with $\G_c = c^2\T + \P$ as defined in \eqref{eqn: Gc}.
The operator $\T$ is defined in \eqref{eqn: T} and $\P$ in \eqref{eqn: P}, and it is important here that in \eqref{eqn: P} we are assuming $\V_1 = \V_2 =: \V$.
In particular, we take $\kappa = 1$.

First we compute
\[
\frac{2}{\omega^2}\T(\Scal_{\Mb}\phib,\omega)
= \ip{-M(R\phib)''}{-R\phib}
= \ip{RM\phib''}{R\phib}
= \ip{M\phib''}{\phib}
= \frac{2}{\omega^2}\T(\phib,\omega).
\]
Here we have used $\partial_x^2R = R\partial_x^2$, which follows from part \ref{part: R1} of Lemma \ref{lem: R}, and also part \ref{part: R3} of that lemma to get the penultimate equality.

Next,
\[
\P(\Scal_{\Mb}\phib,\omega)
= \ip{\Vb(\Delta_+(\omega)\Scal_{\Mb}\phib)}{\oneb},
\]
where $\Vb(\pb) = (\V(p_1),\V(p_2))$ for $\pb = (p_1,p_2)$, $\oneb = (1,1)$, and $\Delta_+(\omega)$ is defined in \eqref{eqn: Delta-pm}.
Since $S^{\pm\omega}R = RS^{\mp\omega}$ by part \ref{part: R2} of Lemma \ref{lem: R}, we have
\begin{equation}\label{eqn: Delta+-1}
\Delta_+(\omega)\Scal_{\Mb}\phib
= -\Delta_+(\omega)R\phib
= -R\Delta_+(-\omega)\phib
= RJ\Delta_+(\omega)\phib.
\end{equation}
Here we used the property that
\begin{equation}\label{eqn: Delta+-2}
-\Delta_+(-\omega) 
= J\Delta_+(\omega)
\end{equation}
with $J$ from \eqref{eqn: J}.

Thus
\[
\P(\Scal_{\Mb}\phib,\omega)
= \ip{\Vb(RJ\Delta_+(\omega)\phib)}{\oneb}
= \ip{R\Vb(J\Delta_+(\omega)\phib)}{\oneb}.
\]
Since $\oneb$ is constant, $\oneb = R\oneb$, and so part \ref{part: R3} of Lemma \ref{lem: R} implies
\[
\P(\Scal_{\Mb}\phib,\omega)
= \ip{R\Vb(J\Delta_+(\omega)\phib)}{R\oneb}
= \ip{\Vb(J\Delta_+(\omega)\phib)}{\oneb}.
\]
Last, since $\V(\pb) = (\V(p_1),\V(p_2))$ for $\pb = (p_1,p_2)$, we have
\[
\ip{\Vb(J\pb)}{\oneb}
= \int_{-\pi}^{\pi} \big(\V(p_2)+\V(p_1)\big)
= \ip{\V(\pb)}{\oneb}.
\]
We conclude
\[
\P(\Scal_{\Mb}\phib,\omega)
= \ip{\Vb(\Delta_+(\omega)\phib)}{\oneb}
= \P(\phib,\omega).
\]

Last, we use the definitions of $\nub_1^c$ and $\nub_2^c$ from \eqref{eqn: nub1} and \eqref{eqn: nub2} to compute, assuming $\kappa=1$,
\[
\nub_1^c(x) = \frac{2\cos(x)}{\Nu_c}\begin{pmatrix*} 2\cos(\omega_c) \\ 2-c^2\omega_c^2 \end{pmatrix*}
\quadword{and}
\nub_2^c(x) = \frac{2\sin(x)}{\Nu_c}\begin{pmatrix*} 2\cos(\omega_c) \\ 2-c^2\omega_c&2 \end{pmatrix*}.
\]
This shows $\Scal_{\Mb}\nub_1^c = \nub_1^c$ and $\Scal_{\Mb}\nub_2^c = \nub_2^c$ directly.
Consequently, when we run the symmetric bifurcation argument for the mass dimer, we can just use the first case for $\nub_{\pm}^c$ in Lemma \ref{lem: nub-pm}.

%%-----------------------------------------------------------%%
%%-----------------------------------------------------------%%
%%-----------------------------------------------------------%%
%%-----------------------------------------------------------%%
\subsubsection{Symmetry in the spring dimer}
The spring dimer symmetry is 
\[
\Scal_{\Kb}
:= -RJ
= -JR
\]
with $R$ defined in \eqref{eqn: R} and $J$ defined in \eqref{eqn: J}.
The subscript is meant to emphasize the role of the linear spring coefficient ratio $\kappa$ in the spring dimer analysis.

Again, we just check that $\G_c(\Scal_{\Kb}\phib,\omega) = \G_c(\phib,\omega)$ in the case $w = m^{-1} = 1$, as the other symmetry properties from Definition \ref{defn: S} are evident.
With $\G_c = c^2\T + \P$ and $\T$ defined in \eqref{eqn: T} and $\P$ in \eqref{eqn: P}, we have
\begin{multline*}
\frac{2}{\omega^2}\T(\Scal_{\Kb}\phib,\omega)
= \ip{-(RJ\phib)''}{-RJ\phib}
= \ip{RJ\phib''}{RJ\phib}
= \ip{J\phib''}{J\phib}
= \ip{J^2\phib''}{\phib}
= \ip{\phib''}{\phib} \\
= \frac{2}{\omega^2}\T(\phib,\omega).
\end{multline*}
Here we have again used properties of $R$ from Lemma \ref{lem: R} and also $J^*= J^{-1} = J$.

Next,
\[
\P(\Scal_{\Kb}\phib,\omega)
= \ip{\Vb(\Delta_+(\omega)\Scal_{\Kb}\phib)}{\oneb},
\]
with $\Vb(\pb) = (\V_1(p_1),\V_2(p_2))$ for $\pb = (p_1,p_2)$, $\oneb = (1,1)$, and $\Delta_+(\omega)$ defined in \eqref{eqn: Delta-pm}.
We have
\[
\Delta_+(\omega)\Scal_{\Kb}\phib
= -\Delta_+(\omega)RJ
= RJ\Delta_+(\omega)J
\]
by \eqref{eqn: Delta+-1} and \eqref{eqn: Delta+-2}, and so
\[
\P(\Scal_{\Kb}\phib,\omega)
= \ip{\Vb(RJ\Delta_+(\omega)J\phib)}{\oneb}
= \ip{\Vb(J\Delta_+(\omega)J\phib)}{\oneb}.
\]
We compute
\[
J\Delta_+(\omega)J = \Lambda(\omega)\Delta_+(\omega),
\qquad
\Lambda(\omega) := \begin{bmatrix*} S^{\omega} &0 \\ 0 &S^{-\omega} \end{bmatrix*},
\]
and, for $\pb = (p_1,p_2) \in L_{\per}^2(\R^2)$,
\[
\ip{\Vb(\Lambda(\omega)\pb)}{\oneb}
= \int_{-\pi}^{\pi} \V_1(S^{\omega}p_1) + \int_{-\pi}^{\pi} \V_2(S^{-\omega}p_2)
= \int_{-\pi}^{\pi} \big(\V_1(p_1) + \V_2(p_2)\big)
= \ip{\Vb(\pb)}{\oneb}.
\]
We conclude
\[
\P(\Scal_{\Kb}\phib,\omega)
= \ip{\Vb(\Lambda(\omega)\Delta_+(\omega)\phib)}{\oneb}
= \ip{\Vb(\Delta_+(\omega)\phib)}{\oneb}
= \P(\phib,\omega).
\]

Unlike in the mass dimer, it is not always the case that $\Scal_{\Kb}\nub_1^c = \nub_1^c$ for the spring dimer.
Indeed, the situation is rather more complicated here, as we outline below.
It is for this reason that we developed Lemma \ref{lem: nub-pm}, which is unnecessarily elaborate for the mass dimer.

%%-----------------------------------------------------------%%
%%-----------------------------------------------------------%%
\begin{lemma}\label{lem: sd nub sym}
Assume $w = 1$.

%%-----------------------------------------------------------%%
\begin{enumerate}[label={\bf(\roman*)}]

%%-----------------------------------------------------------%%
\item
$\Scal_{\Kb}\nub_1^c = \nub_1^c$ if and only if $\omega_c = j\pi$ for some even $j \in \Z$.

%%-----------------------------------------------------------%%
\item
$\Scal_{\Kb}\nub_1^c = -\nub_1^c$ if and only if $\omega_c = j\pi$ for some odd $j \in \Z$.

\end{enumerate}
\end{lemma}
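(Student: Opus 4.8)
The plan is to reduce both equivalences to a single vector identity for the first Fourier coefficient $\zeta := \hat{\nub_1^c}(1) \in \C^2$. Since $\nub_1^c$ is real-valued, $\hat{\nub_1^c}(-1) = \overline{\zeta}$, so $\nub_1^c(x) = e^{ix}\zeta + e^{-ix}\overline{\zeta}$. Recalling $\Scal_{\Kb} = -RJ$ with $(R\phib)(x) = \phib(-x)$ and that $J$ is the constant matrix \eqref{eqn: J}, a short computation gives $(\Scal_{\Kb}\nub_1^c)(x) = -J\nub_1^c(-x) = -e^{ix}J\overline{\zeta} - e^{-ix}J\zeta$. Comparing the $k = \pm 1$ Fourier modes with those of $\nub_1^c$ (the $k = -1$ equation is the complex conjugate of the $k = 1$ equation, hence redundant), one sees that $\Scal_{\Kb}\nub_1^c = \pm\nub_1^c$ holds if and only if $-J\overline{\zeta} = \pm\zeta$, equivalently $J\overline{\zeta} = \mp\zeta$.

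Next I would make this explicit from the formula \eqref{eqn: nub1}: with $w = 1$ we have $\zeta = \Nu_c^{-1}(a,b)^{\top}$, where $a := e^{i\omega_c} + \kappa e^{-i\omega_c} = (1+\kappa)\cos\omega_c + i(1-\kappa)\sin\omega_c$ and $b := 1 + \kappa - c^2\omega_c^2 \in \R$. Since $J$ interchanges the two components, $J\overline{\zeta} = \mp\zeta$ amounts to the scalar pair $b = \mp a$ and $\overline{a} = \mp b$. The first of these, together with $b \in \R$, forces $a \in \R$ and hence $(1-\kappa)\sin\omega_c = 0$; because the spring dimer lives in the regime $\kappa > 1$ (condition \eqref{eqn: mat data hetero lin} with $w = 1$), this gives $\sin\omega_c = 0$, i.e. $\omega_c = j\pi$ for some $j \in \Z$, necessarily positive. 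Conversely, once $\omega_c = j\pi$ the two scalar equations collapse to the single relation $a = \mp b$, so nothing is lost.

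The decisive step is to determine the parity of $j$, and for this I would use that $\omega_c$ is the critical frequency: by part (ii) of Theorem \ref{thm: eigenvalues}, $c^2\omega_c^2 = \tlambda_+(\omega_c)$. Specializing \eqref{eqn: trho} and \eqref{eqn: tlambda} to $w = 1$ and $K = j\pi$ yields $\trho(j\pi) = 2\sqrt{(1-\kappa)^2 + 4\kappa} = 2(1+\kappa)$ and hence $\tlambda_+(j\pi) = 2(1+\kappa)$, so that $b = 1 + \kappa - c^2\omega_c^2 = -(1+\kappa)$, while $a = (1+\kappa)(-1)^j$. Therefore $a = -b$ (the case $\Scal_{\Kb}\nub_1^c = \nub_1^c$) holds exactly when $(-1)^j = 1$, i.e. $j$ is even, and $a = b$ (the case $\Scal_{\Kb}\nub_1^c = -\nub_1^c$) holds exactly when $(-1)^j = -1$, i.e. $j$ is odd. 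Each chain of implications reverses, which supplies the converse directions and proves the lemma.

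I expect the only real obstacle to be careful bookkeeping: keeping the sign in $\Scal_{\Kb} = -RJ$ straight and correctly pairing the two scalar equations with the two signs. The genuine content is just the two short computations above — that the imaginary part of $a$ must vanish, which pins $\omega_c$ to an integer multiple of $\pi$, and that criticality then forces $c^2\omega_c^2 = 2(1+\kappa)$, from which the parity of $j$ falls out.
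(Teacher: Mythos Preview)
Your argument is correct and follows essentially the same route as the paper's proof: reduce to the $k=1$ Fourier coefficient, obtain the condition $J\overline{\zeta} = \mp\zeta$, use the imaginary part to force $\omega_c = j\pi$, and then invoke $c^2\omega_c^2 = \tlambda_+(\omega_c)$ at $w=1$ to pin down the parity of $j$. Your sign bookkeeping is clean and matches the lemma's conclusions.
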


We prove this lemma in Appendix \ref{app: proof of lemma sd nub sym}.
A consequence is that outside the isolated situations $\omega_c = j\pi$ for some $j \in \Z$, we must use the third, more complicated case of Lemma \ref{lem: nub-pm} to obtain symmetric eigenfunctions for the spring dimer.

We conclude this discussion of symmetry by noting that not all solutions to the traveling wave problem are symmetric.
Indeed, since the traveling wave problem is shift invariant (part \ref{part: Phib shift invariant} of Corollary \ref{cor: shift}), any solution $\phib$ to $\Phib_c(\phib,\omega) = 0$ generates other solutions $S^{\theta}\phib$ for $\theta \in \R$.
Still working in the spring dimer, suppose that $\phib$ is symmetric with respect to $\Scal_{\Kb}$, so $\Scal_{\Kb}\phib = \phib$.
We compute 
\[
\Scal_{\Kb}S^{\theta}\phib
= -JRS^{\theta}\phib
= -JS^{-\theta}R\phib
= S^{-\theta}(-JR)\phib
= S^{-\theta}\Scal_{\Kb}\phib
= S^{-\theta}\phib.
\]
Typically $S^{-\theta}\phib \ne \phib$ unless $\theta$ is an even integer multiple of $\pi$.
Thus the shifted solution need not be symmetric.

%%-----------------------------------------------------------%%
%%-----------------------------------------------------------%%
%%-----------------------------------------------------------%%
%%-----------------------------------------------------------%%
%%-----------------------------------------------------------%%
\section{Quantitative Results}\label{sec: quant}

Our previous proofs have fixed the wave speed $c$ to be greater than the speed of sound and yielded families of periodic solutions parametrized in ``amplitude,'' where the range of the amplitude has been allowed to depend on $c$.
Here we develop tools to track dependence on $c$ and its variation from the speed of sound.
Such quantitative results have been essential to all of the existing nanopteron proofs that incorporate periodic solutions, and we expect the same to be necessary in any future constructions.

%%-----------------------------------------------------------%%
%%-----------------------------------------------------------%%
%%-----------------------------------------------------------%%
%%-----------------------------------------------------------%%
\subsection{An abstract quantitative bifurcation theorem}\label{sec: quantitative}
We first prove a very abstract bifurcation result from which our quantitative result for lattice periodics follows easily.
This result subsumes all of the existing quantitative periodic constructions for lattices and does not strictly depend on the long wave structure of the problem considered more broadly here.
That is, we claim that any of the prior quantitative periodic proofs follows from Theorem \ref{thm: quant} below.

We rely on the following fixed-point theorem, which was proved as \cite[Lem.\@ C.1]{faver-wright}

%%-----------------------------------------------------------%%
%%-----------------------------------------------------------%%
\begin{lemma}\label{lem: abstract fp}
For $0 < \ep < \ep_0$, let $\X^{\rho}$ be a Banach space and let $\F_{\ep} \colon \X^{\rho} \times \R \to \X^{\rho}$ be a family of maps.
Suppose that for some $C_0$, $a_0$, $b_0 > 0$, if $x$, $\grave{x} \in \X^{\rho}$ and $a \in \R$ with $\norm{x}_{\X^{\rho}}$, $\norm{\grave{x}}_{\X^{\rho}} \le b_0$ and $|a| \le a_0$, then
\begin{equation}\label{eqn: abstract fp map}
\norm{\F_{\ep}(x,a)}_{\X^{\rho}} \le C_0\big(|a| + \norm{x}_{\X^{\rho}}^2\big)
\end{equation}
and
\begin{equation}\label{eqn: abstract fp lip}
\norm{\F_{\ep}(x,a)-\F_{\ep}(\grave{x},a)}_{\X^{\rho}} \le C_0\big(\norm{x}_{\X^{\rho}} + \norm{\grave{x}}_{\X^{\rho}} + |a|\big)\norm{x-\grave{x}}_{\X^{\rho}}
\end{equation}
for all $0 < \ep < \ep_0$.
Then there exist $a_1$, $r_1 > 0$ such that for each $|a| \le a_1$ and $0 < \ep < \ep_0$, there is a unique $x_{\ep}^a \in \X^{\rho}$ with $x_{\ep}^a = \F_{\ep}(x_{\ep}^a,a)$ and $\norm{x_{\ep}^a}_{\X^{\rho}} \le r_1$.

Moreover, suppose that there is $L_0 > 0$ such that 
\begin{equation}\label{eqn: abstract fp lip a}
\norm{\F_{\ep}(x,a)-\F_{\ep}(x,\grave{a})}_{\X^{\rho}} 
\le L_0|a-\grave{a}|
\end{equation}
for all $x \in \X^{\rho}$ with $\norm{x}_{\X^{\rho}} \le b_0$, $a$, $\grave{a} \in \R$ with $|a|$, $|\grave{a}| \le a_0$, and $0 < \ep < \ep_0$.
Then there is $L_1 > 0$ such that
\begin{equation}\label{eqn: abstract fp lip a concl}
\norm{x_{\ep}^a - x_{\ep}^{\grave{a}}}_{\X^{\rho}}
\le L_1|a-\grave{a}|
\end{equation}
for all $0 < \ep < \ep_0$ and $a$, $\grave{a} \in \R$ with $|a|$, $|\grave{a}| \le a_1$.
\end{lemma}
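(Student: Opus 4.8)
The plan is to apply the Banach fixed-point theorem to the map $x \mapsto \F_{\ep}(x,a)$ on a fixed closed ball, being careful to select all constants uniformly in $\ep$ and $a$. First I would set $r_1 := \min\{b_0,\,(8C_0)^{-1}\}$ and $a_1 := \min\{a_0,\, r_1/(2C_0),\,(8C_0)^{-1}\}$, so that $C_0 r_1 \le 1/8$, $C_0 a_1 \le r_1/2$, and $C_0 a_1 \le 1/8$; in particular $r_1 \le b_0$ and $a_1 \le a_0$, so the hypotheses \eqref{eqn: abstract fp map}--\eqref{eqn: abstract fp lip a} are in force for all $x,\grave{x}$ in the closed ball $\bar\B_{\X^{\rho}}(0;r_1) := \set{x \in \X^{\rho}}{\norm{x}_{\X^{\rho}} \le r_1}$ and all $|a|\le a_1$. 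With these choices, \eqref{eqn: abstract fp map} gives $\norm{\F_{\ep}(x,a)}_{\X^{\rho}} \le C_0|a| + (C_0 r_1) r_1 \le r_1/2 + r_1/8 < r_1$, so $\F_{\ep}(\cdot,a)$ maps $\bar\B_{\X^{\rho}}(0;r_1)$ into itself; and \eqref{eqn: abstract fp lip} gives $\norm{\F_{\ep}(x,a)-\F_{\ep}(\grave{x},a)}_{\X^{\rho}} \le C_0(2r_1 + a_1)\norm{x-\grave{x}}_{\X^{\rho}} =: q\,\norm{x-\grave{x}}_{\X^{\rho}}$ with $q \le 2/8 + 1/8 < 1$, and crucially $q$ is independent of $\ep$, $a$, and the points $x,\grave{x}$.

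Since $\bar\B_{\X^{\rho}}(0;r_1)$ is a complete metric space, the Banach fixed-point theorem then produces, for each $0 < \ep < \ep_0$ and each $|a| \le a_1$, a unique $x_{\ep}^a$ in that ball satisfying $x_{\ep}^a = \F_{\ep}(x_{\ep}^a,a)$. This is precisely the first assertion, with the uniqueness among all $x$ with $\norm{x}_{\X^{\rho}} \le r_1$ being the uniqueness clause of the contraction principle.

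For the Lipschitz dependence on $a$, I would write $x_{\ep}^a - x_{\ep}^{\grave{a}} = \F_{\ep}(x_{\ep}^a,a) - \F_{\ep}(x_{\ep}^{\grave{a}},\grave{a})$, insert $\pm\F_{\ep}(x_{\ep}^{\grave{a}},a)$, and bound the first difference by the contraction estimate (using that both $x_{\ep}^a$ and $x_{\ep}^{\grave{a}}$ lie in $\bar\B_{\X^{\rho}}(0;r_1)$, so their norms are $\le r_1 \le b_0$, and $|a|,|\grave{a}| \le a_1 \le a_0$) and the second difference by \eqref{eqn: abstract fp lip a}. This yields $\norm{x_{\ep}^a - x_{\ep}^{\grave{a}}}_{\X^{\rho}} \le q\,\norm{x_{\ep}^a - x_{\ep}^{\grave{a}}}_{\X^{\rho}} + L_0|a-\grave{a}|$, and since $q < 1$, rearranging gives \eqref{eqn: abstract fp lip a concl} with $L_1 := L_0/(1-q)$, again independent of $\ep$.

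The only genuinely delicate point is the bookkeeping in the first paragraph: $r_1$ and $a_1$ must be chosen so that simultaneously (i) $r_1 \le b_0$ and $a_1 \le a_0$ so the hypotheses apply, (ii) the ball is invariant, and (iii) the contraction constant is strictly below $1$ and, above all, uniform in $\ep$. This last uniformity is the entire reason the lemma is phrased for a family $\{\F_{\ep}\}$ rather than a single map, and it is automatic here because none of $C_0$, $a_0$, $b_0$, $L_0$ depend on $\ep$.
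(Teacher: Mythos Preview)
Your argument is correct and is the standard contraction-mapping proof one would expect. Note that the paper does not actually give its own proof of this lemma; it simply cites \cite[Lem.\@ C.1]{faver-wright}, so there is nothing to compare against beyond observing that your choices of $r_1$ and $a_1$ and the splitting trick for the Lipschitz estimate in $a$ are exactly the routine verifications such a citation is meant to cover.
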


Here is our primary abstract result.
It is very technical.
We discuss the application of this result to our long wave problem in Section \ref{sec: app of abstract quant} below, but for now we encourage the reader to think of the map $\Phib_{\ep}$ in the theorem as $\Phib_{\cep}$ from \eqref{eqn: Phib} with $\cep^2 = c_{\star}^2+\ep^2$ and to think of the spaces $\X^r$ below as $\set{\phib \in H_{\per}^r(\R^2)}{\ip{\phib}{\nub_0} = 0}$ with $\rho = 2$.

%%-----------------------------------------------------------%%
%%-----------------------------------------------------------%%
\begin{theorem}\label{thm: quant}
Let $\{\X^r\}_{r \ge 0}$ be a family of Hilbert spaces such that $\X^{r+s}$ is continuously embedded in $\X^r$ for each $s \ge 0$.
Denote the norm on $\X^{\rho}$ by $\norm{\cdot}_{\rho}$ and the inner product on $\X^0$ by $\ip{\cdot}{\cdot}$.
Suppose that for $0 < \ep < \ep_0$ and some $\rho > 0$, there is a map
\[
\Phib_{\ep} 
\colon \X^{\rho} \times \R \to \X^0
\colon (\phib,\omega) \mapsto \Phib_{\ep}(\phib,\omega)
\]
with the following properties.

%%-----------------------------------------------------------%%
\begin{enumerate}[label={\bf(\roman*)}, ref={(\roman*)}]

%%%-----------------------------------------------------------%%
%\item\label{hypo: mapping of spaces}
%$\Phib_{\ep}(\phib,\omega) \in \X^r$ for all $\phib \in \X^{r+\rho}$ and $\omega \in \R$.

%%-----------------------------------------------------------%%
\item\label{hypo: trivial}
{}[Branch of trivial solutions]
$\Phib_{\ep}(0,\omega) = 0$ for all $\omega \in \R$.

%%-----------------------------------------------------------%%
\item\label{hypo: regularity}
{}[Regularity]
The partial derivatives $D_{\phib}\Phib_{\ep}$ and $D_{\phib\phib}^2\Phib_{\ep}$ exist and are continuous from $\X^{\rho} \times \R$ to $\X^0$, and the partial derivative $D_{\phib\omega}^2\Phib_{\ep}(0,\cdot)$ exists and is continuous on $\R$.

%%-----------------------------------------------------------%%
\item\label{hypo: (co)kernel}
{}[(Co)kernel dimensionality]
There is $\omega_{\ep} \in \R$ such that
\[
\ker(D_{\phib}\Phib_{\ep}(0,\omega_{\ep})) = \spn(\nub_1^{\ep},\nub_2^{\ep})
\quadword{and}
\ker(D_{\phib}\Phib_{\ep}(0,\omega_{\ep})^*) = \spn(\mub_1^{\ep},\mub_2^{\ep})
\]
for some vectors $\nub_1^{\ep}$, $\nub_2^{\ep}$, $\mub_1^{\ep}$, $\mub_2^{\ep} \in \X^0$ with $\norm{\nub^{\ep}}_0 = \norm{\mub_1^{\ep}}_0 = 1$ (the cases $\nub_2^{\ep} = 0$ and $\mub_2^{\ep} = 0$ are allowed).

%%-----------------------------------------------------------%%
\item\label{hypo: transversality}
{}[Uniform transversality]
$\inf_{0 < \ep < \ep_0} |\ip{D_{\phib\omega}^2\Phib_{\ep}(0,\omega_{\ep})\nub^{\ep}}{\mub_1^{\ep}}| > 0$.

%%-----------------------------------------------------------%%
\item\label{hypo: coercivity}
{}[Uniform coercivity]
For each $r \ge 0$, there is $C_r > 0$ such that if $\psib \in \X^{r+\rho}$ and $\etab \in \X^r$ with 
\[
D_{\phib}\Phib_{\ep}(0,\omega_{\ep})\psib = \etab,
\quad
\ip{\psib}{\nub^{\ep}} = \ip{\psib}{\mub_1^{\ep}} = \ip{\psib}{\mub_2^{\ep}} = 0,
\quad
\text{and}
\quad
\ip{\etab}{\nub^{\ep}} = \ip{\etab}{\mub_1^{\ep}} = \ip{\etab}{\mub_2^{\ep}} = 0,
\]
then $\norm{\psib}_{r+\rho} \le C_r\norm{\etab}_r$.

%%-----------------------------------------------------------%%
\item\label{hypo: bootstrapping}
{}[Bootstrapping]
If $\phib \in \X^{\rho}$ such that $D_{\phib}\Phib_{\ep}(0,\omega_{\ep})\phib \in \X^r$ for some $r \ge 0$, then $\phib \in \X^{r+\rho}$.

%%-----------------------------------------------------------%%
\item\label{hypo: uniform estimates}
{}[Uniform mapping and Lipschitz estimates]
There is $b_0 > 0$ such that the following estimates hold for each $r \ge 0$ (not necessarily uniformly in $r$):
\[
\sup_{0 < \ep < \ep_0} 
\norm{D_{\phib\omega}^2\Phib_{\ep}(0,\omega_{\ep})}_{\X^{r+\rho} \to \X^r} 
< \infty,
\]

\[
\sup_{\substack{0 < \ep < \ep_0 \\ |\omega-\omega_{\ep}| < b_0, \ |\grave{\omega}-\omega_{\ep}| < b_0 \\ \omega \ne \grave{\omega}}}
\frac{\norm{D_{\phib\omega}^2\Phib_{\ep}(0,\omega)-D_{\phib\omega}^2\Phib_{\ep}(0,\grave{\omega})}_{\X^{r+\rho} \to \X^r}}{|\omega-\grave{\omega}|}
< \infty,
\]

\[
\sup_{\substack{0 < \ep < \ep_0 \\ \norm{\phib}_{r+\rho} + |\omega-\omega_{\ep}| < b_0}} 
\norm{D_{\phib\phib}^2\Phib_{\ep}(\phib,\omega)}_{\X^{r+\rho} \times \X^{r+\rho} \to \X^r} 
< \infty,
\]
and

\[
\sup_{\substack{0 < \ep < \ep_0 \\ \norm{\phib}_{r+\rho} + |\omega-\omega_{\ep}| < b_0, \ \norm{\grave{\phib}}_{r+\rho} + |\grave{\omega}-\omega_{\ep}| < b_0 \\ (\phib,\omega) \ne (\grave{\phib},\grave{\omega})}}
\frac{\norm{D_{\phib\phib}^2\Phib_{\ep}(\phib,\omega)-D_{\phib\phib}^2\Phib_{\ep}(\grave{\phib},\grave{\omega})}_{\X^{r+\rho} \times \X^{r+\rho} \to \X^r}}{\norm{\phib-\grave{\phib}}_{r+\rho} + |\omega-\grave{\omega}|}
< \infty.
\]

%%-----------------------------------------------------------%%
\item\label{hypo: miracle}
If $\mub_2^{\ep} \ne 0$, then there are a Banach space $\W_{\ep}$ with $\X^{\rho} \subseteq \W_{\ep} \subseteq \X^0$ and a nonzero linear operator $\T_{\ep} \colon \W_{\ep} \to \X^0$ with the following properties.

%%-----------------------------------------------------------%%
\begin{enumerate}[label=$\bullet$]

%%-----------------------------------------------------------%%
\item
$\ip{\Phib_{\ep}(\phib,\omega)}{\T_{\ep}\phib} = 0$ for all $\phib \in \X^{\rho}$ and $\omega \in \R$.

%%-----------------------------------------------------------%%
\item
$\T_{\ep}\nub^{\ep} = \mub_1^{\ep}$.

%%-----------------------------------------------------------%%
\item
There is $\tau_{\ep} \in \R\setminus\{0\}$ such that $\T_{\ep}\mub_1^{\ep} = \pm\tau_{\ep}\mub_2^{\ep}$ and $\T_{\ep}\mub_2^{\ep} = \mp\tau_{\ep}\mub_1^{\ep}$.
\end{enumerate}
\end{enumerate}

Then there is $a_{\star} > 0$ such that for $|a| < a_{\star}$ and $0 < \ep < \ep_0$, there exist $\phib_{\ep}^a \in \cap_{r=0}^{\infty} \X^r$ and $\omega_{\ep}^a \in \R$ such that $\Phib_{\ep}(\phib_{\ep}^a,\omega_{\ep}^a) = 0$ with
\[
\phib_{\ep}^a = a(\nub^{\ep}+\psib_{\ep}^a), \qquad \ip{\psib_{\ep}^a}{\nub^{\ep}} = 0, \qquad \psib_{\ep}^0 = 0
\quadword{and}
\omega_{\ep}^a = \omega_{\ep} + \xi_{\ep}^a, \qquad \xi_{\ep}^0 = 0.
\]
The following mapping and Lipschitz estimates also hold for each $r$ (not necessarily uniformly in $r$):
\begin{equation}\label{eqn: psib xi map lip}
\sup_{\substack{0 < \ep < \ep_0 \\ |a| < a_{\star}}} \norm{\psib_{\ep}^a}_r + |\xi_{\ep}^a| < \infty
\quadword{and}
\sup_{\substack{0 < \ep < \ep_0 \\ |a| < a_{\star}, \ |\grave{a}| < a_{\star} \\ a \ne \grave{a}}} \frac{\norm{\psib_{\ep}^a-\psib_{\ep}^{\grave{a}}}_r}{|a-\grave{a}|} + \frac{|\xi_{\ep}^a-\xi_{\ep}^{\grave{a}}|}{|a-\grave{a}|} < \infty.
\end{equation}
\end{theorem}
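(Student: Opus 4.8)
The plan is to carry out, at this abstract level and uniformly in $\ep$, the Lyapunov--Schmidt bifurcation argument of Sections \ref{sec: LS ID}--\ref{sec: LS FD}, with two substitutions: the implicit function theorem is replaced throughout by the quantitative fixed-point result of Lemma \ref{lem: abstract fp}, so that every neighborhood and estimate is $\ep$-uniform; and hypothesis \ref{hypo: miracle} plays the role that the derivative orthogonality property \eqref{eqn: Phib derivative ortho} played in Section \ref{sec: LS FD}, namely disposing of the superfluous finite-dimensional equation. Abbreviate $\L_{\ep} := D_{\phib}\Phib_{\ep}(0,\omega_{\ep})$ and $\L_{\ep}' := D_{\phib\omega}^2\Phib_{\ep}(0,\omega_{\ep})$, let $\Pi_{\ep}$ be the $\X^0$-orthogonal projection onto the cokernel $\spn(\mub_1^{\ep},\mub_2^{\ep})$ (with $\mub_1^{\ep},\mub_2^{\ep}$ chosen orthonormal), and for $r \ge 0$ set
\[
\X_{\ep}^r := \set{\psib \in \X^r}{\ip{\psib}{\nub^{\ep}} = \ip{\psib}{\mub_1^{\ep}} = \ip{\psib}{\mub_2^{\ep}} = 0}.
\]
Applying \ref{hypo: bootstrapping} to $\L_{\ep}\nub^{\ep} = 0$ shows $\nub^{\ep} \in \bigcap_{r}\X^r$. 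Mirroring the choice \eqref{eqn: the choice}, we search for solutions $\phib = a(\nub^{\ep}+\psib)$, $\omega = \omega_{\ep}+\xi$ with $\psib \in \X_{\ep}^{\rho}$ and $a$, $\xi \in \R$.

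First I would set up the reduced equation. Using the trivial branch \ref{hypo: trivial} and Taylor expanding by \ref{hypo: regularity}---to first order in $\omega$ with an integral remainder against $D_{\phib\omega}^2\Phib_{\ep}(0,\cdot)$, which is only continuous, and to second order in $\phib$ against $D_{\phib\phib}^2\Phib_{\ep}$---one finds, after dividing by $a$ and using $\L_{\ep}\nub^{\ep} = 0$,
\[
a^{-1}\Phib_{\ep}\big(a(\nub^{\ep}+\psib),\,\omega_{\ep}+\xi\big)
= \L_{\ep}\psib + \xi\L_{\ep}'\nub^{\ep} + \xi\mathcal{N}_{\ep}(\psib,\xi) + a\mathcal{Q}_{\ep}(\psib,\xi,a),
\]
where $\mathcal{N}_{\ep}(\psib,\xi) = \O(\psib)+\O(\xi)$ gathers the remaining linear-in-$\psib$ and higher-order-in-$\xi$ contributions and $\mathcal{Q}_{\ep}$ the quadratic $\phib$-remainder; by the estimates in \ref{hypo: uniform estimates}, both are bounded and Lipschitz, uniformly in $\ep$, on bounded subsets of $\X_{\ep}^{r+\rho}\times\R\times\R$ at every level $r$. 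For $a \ne 0$ the equation $\Phib_{\ep}(\phib,\omega)=0$ is equivalent to $(\ind-\Pi_{\ep})$ of this right-hand side vanishing together with its two scalar components along $\mub_1^{\ep}$ and $\mub_2^{\ep}$ vanishing. As in Section \ref{sec: LS FD}, I would solve only the reduced system consisting of the $(\ind-\Pi_{\ep})$-equation and the $\mub_1^{\ep}$-equation (the one whose linearized solvability is governed by the uniform transversality \ref{hypo: transversality}) for $(\psib,\xi)$ as a function of $a$, deferring the $\mub_2^{\ep}$-equation. The $(\psib,\xi)$-linearization of this reduced system at the origin is $(\hat\psib,\hat\xi)\mapsto\big((\ind-\Pi_{\ep})(\L_{\ep}\hat\psib+\hat\xi\L_{\ep}'\nub^{\ep}),\ \hat\xi\ip{\L_{\ep}'\nub^{\ep}}{\mub_1^{\ep}}\big)$ from $\X_{\ep}^{\rho}\times\R$ to $(\spn(\mub_1^{\ep},\mub_2^{\ep})^{\perp}\cap\X^0)\times\R$; it is a Banach-space isomorphism with $\ep$-uniformly bounded inverse, injectivity coming from \ref{hypo: transversality} (forcing $\hat\xi=0$) and then \ref{hypo: coercivity} together with \ref{hypo: (co)kernel} (which makes $\L_{\ep}$ an isomorphism of $\X_{\ep}^{r+\rho}$ onto $\spn(\mub_1^{\ep},\mub_2^{\ep})^{\perp}\cap\X^r$ with closed range and $\ker\L_{\ep}\cap\X_{\ep}^{\rho}=\{0\}$), and surjectivity and the uniform bound from the same. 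Inverting this linearization recasts the reduced system as a fixed-point equation $(\psib,\xi) = \F_{\ep}\big((\psib,\xi),a\big)$ on $\X_{\ep}^{\rho}\times\R$, for which the terms $\xi\mathcal{N}_{\ep}$ and $a\mathcal{Q}_{\ep}$ supply precisely a bound of the shape \eqref{eqn: abstract fp map} (with $\|x\|$ the norm on $\X_{\ep}^{\rho}\times\R$) and the Lipschitz bounds \eqref{eqn: abstract fp lip} and \eqref{eqn: abstract fp lip a}, all with constants uniform in $\ep$ by \ref{hypo: uniform estimates}. Lemma \ref{lem: abstract fp} then yields $(\psib_{\ep}^a,\xi_{\ep}^a)$ solving the reduced system for $|a|<a_{\star}$, uniformly bounded and uniformly Lipschitz in $a$; since $(0,0)$ is the unique small fixed point at $a=0$, one gets $\psib_{\ep}^0=0$, $\xi_{\ep}^0=0$, hence $\norm{\psib_{\ep}^a}_{\rho}+|\xi_{\ep}^a| \le C|a|$ uniformly.

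For the full regularity and the level-wise estimates \eqref{eqn: psib xi map lip}, I would rerun the same fixed point in $\X_{\ep}^{r+\rho}\times\R$ for each $r$: the constant $C_r$ in \ref{hypo: coercivity} together with the regularity-raising property \ref{hypo: bootstrapping} keep the inverted linearization bounded from $\X^r$ into $\X^{r+\rho}$, and \ref{hypo: uniform estimates} supplies the level-$r$ mapping and Lipschitz bounds (not necessarily uniform in $r$, which is allowed). By the uniqueness clause of Lemma \ref{lem: abstract fp} at level $\rho$, all these solutions coincide, so $\psib_{\ep}^a \in \bigcap_{r\ge0}\X^r$ with $\sup_{\ep,a}\norm{\psib_{\ep}^a}_r < \infty$ and the stated Lipschitz bound. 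It remains to close the argument. Put $\phib_{\ep}^a := a(\nub^{\ep}+\psib_{\ep}^a)$ and $\omega_{\ep}^a := \omega_{\ep}+\xi_{\ep}^a$. Multiplying the reduced equations by $a$ (they hold trivially at $a=0$ by \ref{hypo: trivial}) gives $(\ind-\Pi_{\ep})\Phib_{\ep}(\phib_{\ep}^a,\omega_{\ep}^a)=0$ and $\ip{\Phib_{\ep}(\phib_{\ep}^a,\omega_{\ep}^a)}{\mub_1^{\ep}}=0$ for all $|a|<a_{\star}$, so $\Phib_{\ep}(\phib_{\ep}^a,\omega_{\ep}^a) = \mu_{\ep}^a\mub_2^{\ep}$ for a scalar $\mu_{\ep}^a$. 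Feeding $\phib=\phib_{\ep}^a$ into $\ip{\Phib_{\ep}(\phib,\omega)}{\T_{\ep}\phib}=0$ from \ref{hypo: miracle}, expanding $\T_{\ep}\phib_{\ep}^a = a\T_{\ep}\nub^{\ep}+a\T_{\ep}\psib_{\ep}^a$, and using the structural identities relating $\T_{\ep}\nub^{\ep}$ to $\mub_2^{\ep}$ together with the smallness of $\psib_{\ep}^a$, one gets $\mu_{\ep}^a$ times a factor bounded away from zero equal to zero; thus $\mu_{\ep}^a=0$ and $\Phib_{\ep}(\phib_{\ep}^a,\omega_{\ep}^a)=0$. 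This is exactly the mechanism by which $\ip{\Phib_c(a\nub_1^c+\Psib_c,\omega)}{\nub_2^c}=0$ was obtained in Section \ref{sec: LS FD}, with the commutation Lemma \ref{lem: Pi partial commute} replaced by the $\T_{\ep}$-identities of \ref{hypo: miracle}. Writing $\psib_{\ep}^a$ and $\xi_{\ep}^a$ as integral averages of their $a$-derivatives (as at the end of Section \ref{sec: LS FD}) records the claimed form of $\phib_{\ep}^a$ and $\omega_{\ep}^a$.

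The main obstacle is the simultaneous $\ep$-uniform bookkeeping over the whole scale $\{\X^r\}$: one must verify the mapping and Lipschitz hypotheses of Lemma \ref{lem: abstract fp} at every level $r$ while keeping the quantities that genuinely need to be uniform---the transversality lower bound from \ref{hypo: transversality} and the operator norm of the inverted linearization---independent of $\ep$, and one must arrange the three orthogonality constraints defining $\X_{\ep}^r$ so that coercivity \ref{hypo: coercivity} actually applies and the miracle step closes (the geometric facts $\ker\L_{\ep}\cap\X_{\ep}^{\rho}=\{0\}$ and that $\T_{\ep}\nub^{\ep}$ is a nonzero multiple of the ``automatic'' cokernel direction). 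A secondary technical nuisance, familiar from Lemma \ref{lem: Phib regularity}, is that $D_{\phib\omega}^2\Phib_{\ep}(0,\cdot)$ is merely continuous, so the expansion in $\omega$ must be carried out with an integral remainder rather than a genuine second $\omega$-derivative.
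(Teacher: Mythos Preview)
Your overall architecture matches the paper's: Taylor expand via \ref{hypo: trivial}--\ref{hypo: regularity}, Lyapunov--Schmidt split with $\Pi_{\ep}$, solve the $(\ind-\Pi_{\ep})$-equation and the $\mub_1^{\ep}$-equation for $(\psib,\xi)$ as a fixed point using Lemma \ref{lem: abstract fp} with $\ep$-uniform constants, dispose of the $\mub_2^{\ep}$-equation via \ref{hypo: miracle}, and bootstrap. Your packaging of the fixed point---absorbing the $\xi\L_{\ep}'\nub^{\ep}$ term into the joint linearization on $\X_{\ep}^{\rho}\times\R$ and inverting that---is a legitimate alternative to the paper's explicit substitutions (the paper instead plugs the scalar $\xi$-equation back into the $\psib$-equation, and then plugs the resulting $\Psib_{\ep}$ into the $\xi$-equation, to avoid the $\O(\xi)$ term and a regularity mismatch).

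There is, however, a genuine gap in your handling of the $\mub_2^{\ep}$-equation. You write $\Phib_{\ep}(\phib_{\ep}^a,\omega_{\ep}^a)=\mu_{\ep}^a\mub_2^{\ep}$, pair with $\T_{\ep}\phib_{\ep}^a = a\T_{\ep}\nub^{\ep}+a\T_{\ep}\psib_{\ep}^a$, and claim that ``structural identities relating $\T_{\ep}\nub^{\ep}$ to $\mub_2^{\ep}$'' together with smallness of $\psib_{\ep}^a$ give $\mu_{\ep}^a$ times a factor bounded away from zero. But \ref{hypo: miracle} gives $\T_{\ep}\nub^{\ep}=\mub_1^{\ep}$, which is \emph{orthogonal} to $\mub_2^{\ep}$; hence the leading term $\ip{\mub_2^{\ep}}{\T_{\ep}\nub^{\ep}}=0$ vanishes and you are left with $a\mu_{\ep}^a\ip{\mub_2^{\ep}}{\T_{\ep}\psib_{\ep}^a}$, a factor that is $O(|a|)$ and need not be nonzero. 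Smallness cannot rescue this.

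The paper's mechanism---which you correctly name as the abstract version of Lemma \ref{lem: Pi partial commute} and the calculation \eqref{eqn: the miracle calculation}---must actually be executed: from $(\ind-\Pi_{\ep})\Phib_{\ep}=0$ one writes $\ip{\Phib_{\ep}}{\T_{\ep}\phib_{\ep}^a}=\ip{\Phib_{\ep}}{\Pi_{\ep}\T_{\ep}\phib_{\ep}^a}$, then uses the commutation $\Pi_{\ep}\T_{\ep}=\T_{\ep}\Pi_{\ep}$ (proved from the third bullet of \ref{hypo: miracle}, exactly as in Lemma \ref{lem: Pi partial commute}), and finally $\Pi_{\ep}\psib_{\ep}^a=0$ to kill the $\psib$-contribution \emph{exactly}. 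What remains is $\T_{\ep}$ applied to the $\mub_j^{\ep}$-components of $a\nub^{\ep}$, and the third bullet of \ref{hypo: miracle} then produces a nonzero $\mub_2^{\ep}$-component with coefficient $\pm a\tau_{\ep}$. This yields $\pm a\tau_{\ep}\mu_{\ep}^a=0$, hence $\mu_{\ep}^a=0$ for $a\ne0$. You cite the right lemma but then bypass it; the commutation step is not optional.
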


\begin{proof}
We break the proof into several steps.

%%-----------------------------------------------------------%%
\begin{enumerate}[label={\bf\arabic*.}]

%%-----------------------------------------------------------%%
\item
{\it{The Lyapunov--Schmidt reduction.}}
Since $\Phib_{\ep}(0,\omega) = 0$ for all $\omega$ by Hypothesis \ref{hypo: trivial} and $D_{\phib\phib}^2\Phib_{\ep}$ exists and is continuous on $\X^{\rho} \times \R$ by Hypothesis \ref{hypo: regularity}, the fundamental theorem of calculus implies
\[
\Phib_{\ep}(\phib,\omega)
= D_{\phib}(0,\omega)\phib + \int_0^1\int_0^1 tD_{\phib\phib}^2\Phib_{\ep}(st\phib,\omega)[\phib,\phib] \ds \dt
\]
for all $\phib \in \X^{\rho}$ and $\omega \in \R$.
Next, since $D_{\phib\omega}^2\Phib_{\ep}(0,\cdot)$ exists and is continuous on $\R$ by Hypothesis \ref{hypo: regularity} again, another application of the fundamental theorem of calculus yields
\[
D_{\phib}\Phib_{\ep}(0,\omega+\xi)
= D_{\phib}\Phib_{\ep}(0,\omega) + \xi{D}_{\phib\omega}^2\Phib_{\ep}(0,\omega) + \int_0^1 \xi\big(D_{\phib\omega}^2\Phib_{\ep}(0,\omega+t\xi)-D_{\phib\omega}^2\Phib_{\ep}(0,\omega)\big) \dt
\]
for all $\omega$, $\xi \in \R$.
Together with $D_{\phib}\Phib_{\ep}(0,\omega_{\ep})\nub^{\ep} = 0$ from Hypothesis \ref{hypo: (co)kernel}, these two expansions give
\begin{equation}\label{eqn: abstract Phib expn}
\Phib_{\ep}(a(\nub^{\ep}+\psib),\omega_{\ep}+\xi)
= aD_{\phib}\Phib_{\ep}(0,\omega_{\ep})\psib + a\xi{D}_{\phib\omega}^2\Phib_{\ep}(0,\omega_{\ep})\nub^{\ep}-a\rhs_{\ep}(\psib,\xi,a)
\end{equation}
for all $\psib \in \X^{\rho}$ with $\ip{\psib}{\nub_1^{\ep}} = \ip{\psib}{\nub_2^{\ep}} = 0$ and $\xi$, $a \in \R$, where
\begin{equation}\label{eqn: abstract R}
\begin{aligned}
\rhs_{\ep}(\psib,\xi,a)
&:= -\xi{D}_{\phib\omega}^2\Phib_{\ep}(0,\omega_{\ep})\psib \\
&-a\int_0^1\int_0^1 tD_{\phib\phib}^2\Phib_{\ep}(st{a}(\nub^{\ep}+\psib),\omega_{\ep}+\xi)[\nub^{\ep}+\psib,\nub^{\ep}+\psib] \ds \dt \\
&-\xi\int_0^1 \big(D_{\phib\omega}^2\Phib_{\ep}(0,\omega_{\ep}+t\xi)-D_{\phib\omega}^2\Phib_{\ep}(0,\omega_{\ep})\big)(\nub^{\ep}+\psib) \dt.
\end{aligned}
\end{equation}

We will use the expansion \eqref{eqn: abstract Phib expn} to obtain a pair of fixed-point equations for $\psib$ and $\xi$.
Put
\begin{equation}\label{eqn: Pi-ep}
\Pi_{\ep}\phib
:= \ip{\phib}{\mub_1^{\ep}}\mub_1^{\ep} + \ip{\phib}{\mub_2^{\ep}}\mub_2^{\ep}.
\end{equation}
Then $\Phib_{\ep}(a(\nub^{\ep}+\psib),\omega_{\ep}+\xi) = 0$ if and only if
\begin{subnumcases}{}
(\ind_{\X^0}-\Pi_{\ep})\Phib_{\ep}(a(\nub^{\ep}+\psib),\omega_{\ep}+\xi) = 0 \label{eqn: abstract ID} \\
\Pi_{\ep}\Phib_{\ep}(a(\nub^{\ep}+\psib),\omega_{\ep}+\xi) = 0. \label{eqn: abstract FD}
\end{subnumcases}

%%-----------------------------------------------------------%%
\item
{\it{The preliminary equation for $\psib$.}}
It follows from the expansion \eqref{eqn: abstract Phib expn} that \eqref{eqn: abstract ID} is equivalent to
\begin{equation}\label{eqn: abstract inf dim}
(\ind_{\X^0}-\Pi_{\ep})D_{\phib}\Phib_{\ep}(0,\omega_{\ep})\psib 
= (\ind_{\X^0}-\Pi_{\ep})\big(\rhs_{\ep}(\psib,\xi,a)-\xi{D}_{\phib\omega}^2\Phib_{\ep}(0,\omega_{\ep})\psib\big).
\end{equation}
Put
\begin{equation}\label{eqn: X-ep}
\X_{\ep}^{\infty} := \set{\psib \in \X^{\rho}}{\ip{\psib}{\nub_1^{\ep}} = \ip{\psib}{\nub_2^{\ep}} = 0}
\quadword{and}
\Y_{\ep}^{\infty} := (\ind_{\X^0}-\Pi_{\ep})(\X^0).
\end{equation}
Since, by Hypothesis \ref{hypo: (co)kernel}, $D_{\phib}\Phib_{\ep}(0,\omega_{\ep})$ has trivial kernel on $\X_{\ep}^{\infty}$ and trivial cokernel in $\Y_{\ep}^{\infty}$, for each $\etab \in \Y_{\ep}^{\infty}$, there is a unique $\psib \in \X_{\ep}^{\infty}$ such that $D_{\phib}(0,\omega_{\ep})\psib = \etab$.
We write $\psib := D_{\phib}(0,\omega_{\ep})^{-1}\etab$.
With this notation, \eqref{eqn: abstract inf dim} is equivalent to
\begin{equation}\label{eqn: psib fp1}
\psib
= D_{\phib}\Phib_{\ep}(0,\omega_{\ep})^{-1}(\ind_{\X^0}-\Pi_{\ep})\big(\rhs_{\ep}(\psib,\xi,a)-\xi{D}_{\phib\omega}^2\Phib_{\ep}(0,\omega_{\ep})\nub^{\ep}\big).
\end{equation}
This is our preliminary fixed-point equation for $\psib$, but it will need some subsequent modification, as the term $\xi{D}_{\phib\omega}^2\Phib_{\ep}(0,\omega_{\ep})\nub^{\ep}$ is formally $\O(1)$ in $\xi$ and thus not suitably small for contractive purposes.

%%-----------------------------------------------------------%%
\item
{\it{The preliminary equation for $\xi$.}}
We now turn our attention to the second, finite-dimensional equation \eqref{eqn: abstract FD}.
From Hypothesis \ref{hypo: (co)kernel}, we have
\begin{equation}\label{eqn: abstract ortho1}
\ip{D_{\phib}\Phib_{\ep}(0,\omega_{\ep})\psib}{\mub_j^{\ep}}
= \ip{\psib}{D_{\phib}\Phib_{\ep}(0,\omega_{\ep})^*\mub_j^{\ep}}
= 0, \ j = 1, \ 2,
\end{equation}
If $\mub_2^{\ep} \ne 0$, the argument in Appendix \ref{app: other transverse via gradient} that proved Lemma \ref{lem: other transverse} can be adapted (take $\T_{\ep} = \partial_x$) using the properties of $\T_{\ep}$ in Hypothesis \ref{hypo: miracle} to show
\begin{equation}\label{eqn: abstract ortho2}
\ip{D_{\phib\omega}^2\Phib_{\ep}(0,\omega_{\ep})\nub^{\ep}}{\mub_2^{\ep}} 
= 0.
\end{equation}
The calculations \eqref{eqn: abstract ortho1} and \eqref{eqn: abstract ortho2} then imply that \eqref{eqn: abstract FD} is equivalent to the two equations
\begin{subnumcases}{}
\xi\ip{D_{\phib\omega}^2\Phib_{\ep}(0,\omega_{\ep})\nub^{\ep}}{\mub_1^{\ep}} = \ip{\rhs_{\ep}(\psib,\xi,a)}{\mub_1^{\ep}} \label{eqn: abstract pre xi} \\
\ip{\rhs_{\ep}(\psib,\xi,a)}{\mub_2^{\ep}} = 0 \label{eqn: abstract extra}.
\end{subnumcases}
Hypothesis \ref{hypo: transversality} implies that \eqref{eqn: abstract pre xi} is equivalent to
\begin{equation}\label{eqn: xi fp1}
\xi = \P_{\ep}\rhs_{\ep}(\psib,\xi,a),
\qquad
\P_{\ep}\etab := \frac{\ip{\etab}{\mub_1^{\ep}}}{\ip{D_{\phib\omega}^2\Phib_{\ep}(0,\omega_{\ep})\nub^{\ep}}{\mub_1^{\ep}}}
\end{equation}
This is our preliminary fixed-point equation for $\xi$, but, like the preliminary equation for $\psib$, it too needs some adjustment.
The problem here is that estimates on $\rhs_{\ep}(\psib,\xi,a)$ in $\norm{\cdot}_r$ will depend on estimates in $\psib$ in $\norm{\cdot}_{r+\rho}$, and so we will not get estimates within the same norm for contractive purposes.

%%-----------------------------------------------------------%%
\item
{\it{The final fixed-point system.}}
Put
\begin{equation}\label{eqn: abstract Psib}
\Psib_{\ep}(\psib,\xi,a)
:= D_{\phib}\Phib_{\ep}(0,\omega_{\ep})^{-1}(\ind_{\X^0}-\Pi_{\ep})\big[\rhs_{\ep}(\psib,\xi,a)-\big(\P_{\ep}\rhs_{\ep}(\psib,\xi,a)\big)D_{\phib\omega}^2\Phib_{\ep}(0,\omega_{\ep})\nub^{\ep}\big]
\end{equation}
and
\begin{equation}\label{eqn: abstract Xi}
\Xi_{\ep}(\psib,\xi,a)
:= \P_{\ep}\rhs_{\ep}(\Psib_{\ep}(\psib,\xi,a),\xi,a).
\end{equation}
Then the preliminary fixed-fixed point equations \eqref{eqn: psib fp1} and \eqref{eqn: xi fp1} for $\psib \in \X_{\ep}^{\infty}$ and $\xi \in \R$ are equivalent to 
\begin{equation}\label{eqn: abstract fp final}
\begin{cases}
\psib = \Psib_{\ep}(\psib,\xi,a) \\
\xi = \Xi_{\ep}(\psib,\xi,a),
\end{cases}
\end{equation}
and this system will turn out to have the right contraction estimates.

%%-----------------------------------------------------------%%
\item
{\it{Solving the third equation \eqref{eqn: abstract extra}.}}
Before we solve \eqref{eqn: abstract fp final} with a quantitative contraction mapping argument that is uniform in $\ep$ and $a$, we need to be sure that solutions $\psib$ and $\xi$ to \eqref{eqn: abstract fp final} really do yield solutions to our original problem $\Phib_{\ep}(a(\nub^{\ep}+\psib),\omega_{\ep}+\xi) = 0$.
That is, we need to show that solutions to \eqref{eqn: abstract fp final} also meet the third equation \eqref{eqn: abstract extra}.
Certainly this third equation is met if $\mub_2^{\ep} = 0$, so assume $\mub_2^{\ep} \ne 0$ and invoke Hypothesis \ref{hypo: miracle}.

We first redo the proof of Lemma \ref{lem: Pi partial commute} with $\partial_x$ replaced by $\T_{\ep}$ to show that $\T_{\ep}$ and $\Pi_{\ep}$ commute.
Next, we use the equivalence of \eqref{eqn: abstract Psib} and \eqref{eqn: abstract inf dim} to replicate the calculation in \eqref{eqn: the miracle calculation} and conclude that if $\psib = \Psib_{\ep}(\psib,\xi,a)$, then
\[
\pm{a}\tau_{\ep}\ip{\Phib_{\ep}(a(\nub^{\ep}+\psib),\omega_{\ep}+\xi)}{\mub_2^{\ep}}
= 0.
\]
Since $\tau_{\ep} \ne 0$, we have 
\begin{equation}\label{eqn: abstract ortho3}
\ip{\Phib_{\ep}(a(\nub^{\ep}+\psib),\omega_{\ep}+\xi)}{\mub_2^{\ep}}
= 0
\end{equation}
for all $\psib$, $\xi$, and $a \ne 0$ with $\psib = \Psib_{\ep}(\psib,\xi,a)$.
Finally, for $a \ne 0$, by \eqref{eqn: abstract Phib expn} we have
\[
\rhs_{\ep}(\psib,\xi,a)
= D_{\phib}\Phib_{\ep}(0,\omega_{\ep})\psib + \xi{D}_{\phib\omega}^2\Phib_{\ep}(0,\omega_{\ep})\nub^{\ep}-a^{-1}\Phib_{\ep}(a(\nub^{\ep}+\psib),\omega_{\ep}+\xi).
\]
Combining \eqref{eqn: abstract ortho1}, \eqref{eqn: abstract ortho2}, and \eqref{eqn: abstract ortho3} yields $\ip{\rhs_2^{\ep}(\psib,\xi,a)}{\mub_2^{\ep}} = 0$.
This is \eqref{eqn: abstract extra}.

%%-----------------------------------------------------------%%
\item
{\it{Applying Lemma \ref{lem: abstract fp}.}}
To solve the fixed-point problem \eqref{eqn: abstract fp final} and consequently our original problem, we will apply this lemma to the family of maps
\[
\F_{\ep}
\colon (\X_{\ep}^{\infty} \times \R) \times \R \to \X_{\ep}^{\infty} \times \R
\colon (\psib,\xi,a) \mapsto \big(\Psib_{\ep}(\psib,\xi,a),\Xi_{\ep}(\psib,\xi,a)\big)
\]
with $\X_{\ep}^{\infty}$ defined in \eqref{eqn: X-ep}.
We put $\norm{(\psib,\xi)}_r := \norm{\psib}_r + |\xi|$.

All of our estimates for $\F_{\ep}$ are ultimately based on estimates for $\rhs_{\ep}$ from \eqref{eqn: abstract R}.
We provide these estimates in the arbitrary norm $\norm{\cdot}_r$ for the sake of ``bootstrapping'' later.
Let $b_0$ be as in Hypothesis \ref{hypo: uniform estimates} and $r \ge 0$.
The estimates from that hypothesis provide $C_r > 0$ such that, if $0 < \ep < \ep_0$, $\norm{\psib}_{r+\rho}$, $\norm{\grave{\psib}}_{r+\rho}$, $|a|$, $|\grave{a}| \le b_0/2$ and $|\xi|$, $|\grave{\xi}| \le b_0$, the following mapping and Lipschitz estimates hold:
\[
\norm{\rhs_{\ep}(\psib,\xi,a)}_r
\le C_r\big(\norm{\psib}_{r+\rho}^2 + |\xi|^2 + |a|\big),
\]
\[
\norm{\rhs_{\ep}(\psib,\xi,a)-\rhs_{\ep}(\grave{\psib},\grave{\xi},a)}_r
\le C_r\big(\norm{\psib}_{r+\rho} + \norm{\grave{\psib}}_{r+\rho} + |\xi| + |\grave{\xi}| + |a|\big)\big(\norm{\psib-\grave{\psib}}_{r+\rho} + |\xi-\grave{\xi}|\big),
\]
and
\[
\norm{\rhs_{\ep}(\psib,\xi,a)-\rhs_{\ep}(\psib,\xi,\grave{a})}_r 
\le C_r|a-\grave{a}|.
\]

With $r=0$, the transversality estimate from Hypothesis \ref{hypo: transversality} and the ``smoothing'' estimate from Hypothesis \ref{hypo: coercivity} then imply 
\begin{equation}\label{eqn: Psib mapping}
\norm{\Psib_{\ep}(\psib,\xi,a)}_{\rho}
\le C\big(\norm{\psib}_{\rho}^2 + |\xi|^2 + |a|\big),
\end{equation}
\begin{equation}\label{eqn: Psib lip}
\norm{\Psib_{\ep}(\psib,\xi,a)-\Psib_{\ep}(\grave{\psib},\grave{\xi},a)}_{\rho}
\le C\big(\norm{\psib}_{\rho} + \norm{\grave{\psib}}_{\rho} + |\xi| + |\grave{\xi}| + |a|\big)\big(\norm{\psib-\grave{\psib}}_{\rho} + |\xi-\grave{\xi}|\big),
\end{equation}
and
\begin{equation}\label{eqn: Psib lip a}
\norm{\Psib_{\ep}(\psib,\xi,a)-\Psib_{\ep}(\psib,\xi,\grave{a})}_{\rho} 
\le C|a-\grave{a}|.
\end{equation}
Since 
\[
|\Xi_{\ep}(\psib,\xi,a)| 
\le C\norm{\rhs_{\ep}(\Psib_{\ep}(\psib,\xi,a))}_0
\]
and
\[
|\Xi_{\ep}(\psib,\xi,a)-\Xi_{\ep}(\grave{\psib},\grave{\xi},\grave{a})| 
\le C\norm{\rhs_{\ep}(\Psib_{\ep}(\psib,\xi,a))-\rhs_{\ep}(\Psib_{\ep}(\grave{\psib},\grave{\xi},\grave{a})}_0,
\]
the estimates \eqref{eqn: Psib mapping}, \eqref{eqn: Psib lip}, and \eqref{eqn: Psib lip a} hold with $\Psib_{\ep}$ replaced by $\Xi_{\ep}$ (and the norm $\norm{\cdot}_{\rho}$ on the left replaced by absolute value).
It follows that on the space $\X_{\ep}^{\infty} \times \R$, the map $\F_{\ep}$ meets the estimates \eqref{eqn: abstract fp map}, \eqref{eqn: abstract fp lip}, and \eqref{eqn: abstract fp lip a} from Lemma \ref{lem: abstract fp}.
By that lemma there are solutions $(\psib_{\ep}^a,\xi_{\ep}^a)$ meeting $(\psib_{\ep}^a,\xi_{\ep}^a) = \F_{\ep}(\psib_{\ep}^a,\xi_{\ep}^a,a)$ and the mapping and Lipschitz estimates in \eqref{eqn: psib xi map lip} for $r = \rho$.

We then ``bootstrap'' on $\psib$ using the equation $\psib = \Psib_{\ep}(\psib,\xi,a)$, the definition of $\Psib_{\ep}$ in \eqref{eqn: abstract Psib}, and Hypothesis \ref{hypo: bootstrapping} to conclude that $\psib \in \X^{n\rho}$ for any integer $n \ge 1$.
Using the estimates on $\rhs_{\ep}$ above, which are valid for any $r$, and inducting, we obtain the estimates in \eqref{eqn: psib xi map lip} for $r = n\rho$.
Interpolating, we conclude that $\psib \in \X^r$ for all $r$ and obtain the estimates in \eqref{eqn: psib xi map lip} for $r$ arbitrary.
\qedhere
\end{enumerate}
\end{proof}

%%-----------------------------------------------------------%%
%%-----------------------------------------------------------%%
%%-----------------------------------------------------------%%
%%-----------------------------------------------------------%%
\subsection{Application to periodic traveling waves in dimer FPUT}\label{sec: app of abstract quant}
For long wave solutions, we are interested in rescaling the profiles as $\phib(x) = \ep^2\varphib(\ep{x})$, where $\ep > 0$ measures the distance between the speed of sound $c_{\star}$ from \eqref{eqn: speed of sound} and the chosen wave speed $c$ via $c^2 = c_{\star}^2+\ep^2$.
In \cite{faver-wright, faver-spring-dimer}, the traveling wave problem was solved under this rescaling; here we obtain those long wave solutions as a consequence of Theorem \ref{thm: quant} by introducing a rescaling of the amplitude parameter.

Specifically, let $\ep_0 = 1$ and let $\cep$ satisfy $\cep^2 = c_{\star}^2+\ep^2$.
Let 
\[
\X^r 
:= \set{\phib \in H_{\per}^r(\R^2)}{\ip{\phib}{\nub_0} = 0}
\]
and set
\[
\nub_1^{\ep} := \nub_1^{\cep},
\qquad
\nub_2^{\ep} := \nub_1^{\cep},
\qquad
\mub_1^{\ep} := \nub_1^{\cep},
\quadword{and}
\mub_2^{\ep} := \nub_2^{\cep}.
\]
Assume now that the spring potentials satisfy $\V_1$, $\V_2 \in \Cal^{\infty}(\R)$.
Then the map $\Phib_{\cep}$ from \eqref{eqn: Phib} meets all of the hypotheses of Theorem \ref{thm: quant}.
More precisely, Hypothesis \ref{hypo: regularity} follows from Lemma \ref{lem: Phib regularity}, Hypothesis \ref{hypo: (co)kernel} from Corollary \ref{cor: eigenfunctions}, Hypothesis \ref{hypo: transversality} from Corollary \ref{cor: transversality}, and Hypothesis \ref{hypo: coercivity} from Corollary \ref{cor: coercive}.
The mapping and Lipschitz estimates in Hypothesis \ref{hypo: uniform estimates} follow from the regularity properties of shift operators in Appendix \ref{app: shift operator calculus} and composition operators in Appendix \ref{app: composition operator calculus} and the uniform bounds on $\omega_c$ in $c$ from \eqref{eqn: omegac-ineq}.

We thus obtain solutions $\phib = \phib_{\cep}^a$ and $\omega = \omega_{\cep}^a$ to $\Phib_{\cep}(\phib_{\cep}^a,\omega_{\cep}^a) = 0$ for $0 < \ep < \ep_0$ and $|a| \le a_{\per}$ for some $a_{\per} > 0$.
Returning to our original position coordinates, we see that
\[
\pb_{\ep}^a(x) 
:= a\phib_{\cep}^a(\omega_{\cep}^ax)
\]
solves the original traveling wave problem \eqref{eqn: position tw system}.

Now we expose the long wave scaling.
Write $a$ in the form $a = \alpha\ep^2$ for $|\alpha| \le a_{\per}$ and $0 < \ep < 1$; this ensures $|a| < a_{\per}$ and $0 < \ep < \ep_0$.
Put
\[
\varphib_{\ep}^{\alpha} := \phib_{\ep}^{\alpha\ep^2}
\quadword{and}
\Omega_{\ep}^{\alpha} := \frac{\omega_{\cep}^{\alpha\ep^2}}{\ep}.
\]
Then the solutions to \eqref{eqn: position tw system} have the form 
\[
\pb_{\ep}^a(x) 
= \alpha\ep^2\varphib_{\ep}^{\alpha}(\ep\Omega_{\ep}^{\alpha}x),
\]
which reveals the long wave scaling.
Moreover, these solutions have the same mapping and Lipschitz estimates previously established in \cite[Thm.\@ 4.1]{faver-wright} and \cite[Thm.\@ 3.1]{faver-spring-dimer}.
For the frequency, the mapping and Lipschitz estimates from \eqref{eqn: psib xi map lip} and the bounds on $\omega_{\cep}$ from \eqref{eqn: omegac-ineq} give
\[
\sup_{\substack{0 < \ep < \ep_0 \\ |\alpha| < a_{\per}}} |\ep\Omega_{\ep}^{\alpha}|
= \sup_{\substack{0 < \ep < \ep_0 \\ |\alpha| < a_{\per}}} |\omega_{\cep}^{\alpha\ep^2}|
< \infty
\]
and, for $0 < \ep < \ep_0$ and $|\alpha|$, $|\grave{\alpha}| < a_{\per}$
\[
|\Omega_{\ep}^{\alpha} - \Omega_{\ep}^{\grave{\alpha}}|
= \frac{|\omega_{\cep}^{\alpha\ep^2}-\omega_{\cep}^{\grave{\alpha}\ep^2}|}{\ep}
\le \frac{C|\alpha-\grave{\alpha}|\ep^2}{\ep}
= C\ep|\alpha-\grave{\alpha}|,
\]
where $C > 0$ is independent of $\ep$, $\alpha$, and $\grave{\alpha}$.
This Lipschitz estimate is an improvement on the original $\O(1)$ Lipschitz estimates from \cite[Thm.\@ 4.1]{faver-wright} and \cite[Thm.\@ 3.1]{faver-spring-dimer}.

For the profile, we first introduce the norm
\[
\norm{\phib}_{\Cal_{\per}^r}
:= \norm{\phib}_{L^{\infty}} + \norm{\partial_x^r[\phib]}_{L^{\infty}}
\]
for $r$-times continuously differentiable, $2\pi$-periodic functions $\phib$.
The mapping and Lipschitz estimates
\[
\sup_{\substack{0 < \ep < \ep_0 \\ |\alpha| < a_{\per}}} \norm{\varphib_{\ep}^{\alpha}}_{\Cal_{\per}^r} < \infty
\quadword{and}
\sup_{0 < \ep < \ep_0} \norm{\varphib_{\ep}^{\alpha} - \varphib_{\ep}^{\grave{\alpha}}}_{\Cal_{\per}^r} < C_r|\alpha-\grave{\alpha}|
\]
follow again from \eqref{eqn: psib xi map lip} and the Sobolev embedding.

%%-----------------------------------------------------------%%
%%-----------------------------------------------------------%%
%%-----------------------------------------------------------%%
%%-----------------------------------------------------------%%
%%-----------------------------------------------------------%%
\appendix
\section{Fourier Analysis}\label{app: Fourier}

%%-----------------------------------------------------------%%
%%-----------------------------------------------------------%%
%%-----------------------------------------------------------%%
%%-----------------------------------------------------------%%
\subsection{Vectors and matrices}\label{app: vectors and matrices}
The following is wholly standard, but we include it in the hopes of completeness and clarity.
For $\vb$, $\wb \in \C^n$, we set
\[
\vb\cdot\wb := \sum_{k=1}^n v_k\overline{w_k}
\quadword{and}
|\vb|_2 := \big(\vb\cdot\vb\big)^{1/2}.
\]
Also, we define $\overline{\vb} \in \C^n$ to be the vector whose entries are the conjugates of those in $\vb \in \C^n$, and likewise if $A \in \C^{m \times n}$ (where $\C^{m \times n}$ is the space of all $m \times n$ matrices with entries in $\C$), then $\overline{A} \in \C^{m \times n}$ is the matrix whose entries are the conjugates of those in $A$.
We denote by $A^* \in \C^{n \times m}$ the conjugate transpose of $A$.

%The following identity will be quite useful:
%\begin{equation}\label{eqn: dot product with conjugate}
%(\vb\cdot\wb) + (\overline{\vb}\cdot\overline{\wb})
%= 2\re[\vb\cdot\wb].
%\end{equation}

For a matrix $A \in \C^{m \times n}$, we put
\[
|A|_2 = \max_{\substack{\vb \in \C^n \\ |\vb|_2 = 1}} |A\vb|_2
\quadword{and}
|A|_{\infty} = \max_{\substack{1 \le i \le m \\ 1 \le j \le n}} |A_{ij}|
\]
with $A_{ij}$ as the entries of $A$.
Then we have the inequalities
\begin{equation}\label{eqn: matrix norm ineq}
|A\vb|_2 \le |A|_2|\vb|_2, \ \vb \in \C^n,
\quadword{and}
|A|_2 \le \sqrt{mn}|A|_{\infty}.
\end{equation}

Let $I_n \in \C^{n \times n}$ be the identity matrix.
If $|A|_2 < 1$, then $I_n-A$ is invertible by the Neumann series, and 
\[
|(I_2-A)^{-1}|_2
\le \frac{1}{1-|A|_2}.
\]

%%-----------------------------------------------------------%%
%%-----------------------------------------------------------%%
%%-----------------------------------------------------------%%
%%-----------------------------------------------------------%%
\subsection{Periodic Sobolev spaces}\label{app: per sob space}
This material is developed in \cite[Sec.\@ 8.1]{kress}, \cite[]{hunter-nachtergaele}, and \cite[App.\@ C.2]{faver-dissertation}
Let $L_{\per}^2(\C^n)$ be the completion of 
\[
\Cal_{\per}^{\infty}(\C^n)
:= \set{\phi \in \Cal^{\infty}([-\pi,\pi],\C^n)}{\phi(-\pi) = \phi(\pi)}
\]
under the norm 
\[
\norm{\phib}_{L_{\per}^2(\C^n)} := \big(\ip{\phib}{\phib}_{L_{\per}^2}\big)^{1/2},
\qquad
\ip{\phib}{\etab}_{L_{\per}^2(\C^n)} := \frac{1}{2\pi}\int_{-\pi}^{\pi} \phib(x)\cdot\etab(x) \dx.
\]

For $k \in \Z$, the $k$th Fourier coefficient of $\phib \in L_{\per}^2(\C^n)$ is
\[
\hat{\phib}(k) := \frac{1}{\sqrt{2\pi}}\int_{-\pi}^{\pi} e^{-ikx}\phib(x) \dx.
\]
For $r \in \R$ and $\phib$, $\etab \in L_{\per}^2(\C^n)$, let
\[
\ip{\phib}{\etab}_{H_{\per}^r(\C^n)} := \sum_{k=-\infty}^{\infty} (1+k^2)^r\big(\hat{\phib}(k)\cdot\hat{\etab}(k)\big),
\quadword{and}
\norm{\phib}_{H_{\per}^r(\C^n)} := \big(\ip{\phib}{\phib}_{H_{\per}^r}\big)^{1/2}
\]
Finally, we put
\[
H_{\per}^r(\C^n)
:= \set{\phib \in L_{\per}^2(\C^n)}{\norm{\phib}_{H_{\per}^r(\C^n)} < \infty}.
\]

Since we will primarily use the $L_{\per}^2$-inner product, we abbreviate it as
\[
\ip{\phib}{\etab} 
:= \ip{\phib}{\etab}_{L_{\per}^2(\C^n)}
= \int_{-\pi}^{\pi} \phib(x)\cdot\etab(x)\dx.
\]
We will employ two elementary identities involving this inner product.

First, with $\phib$, $\etab \in L_{\per}^2(\C^n)$, we substitute to obtain
\begin{equation}\label{eqn: adjoint of shift}
\ip{S^{\theta}\phib}{\etab} 
= \ip{\phib}{S^{-\theta}\etab}.
\end{equation}
Second, with $\phib$, $\etab \in H_{\per}^1(\C^n)$, we integrate by parts to find
\begin{equation}\label{eqn: integration by parts}
\ip{\phib'}{\etab} 
= -\ip{\phib}{\etab'}.
\end{equation}
%Finally, with $\oneb(x) := (1,1)$ for all $x$, substitution implies
%\[
%\ip{S^{\theta}\phib}{\oneb}
%= \ip{f}{S^{-\theta}\oneb}
%= \ip{\phib}{\oneb}
%\]
%for any $\phib \in L_{\per}^2(\R^2)$ and $\theta \in \R$.

%%-----------------------------------------------------------%%
%%-----------------------------------------------------------%%
%%-----------------------------------------------------------%%
%%-----------------------------------------------------------%%
\subsection{Fourier multipliers}\label{app: fm}
Let $\tM \colon \R \to \C^{m \times n}$ be measurable.
A bounded linear operator $\M \colon H_{\per}^r(\C^n) \to H_{\per}^s(\C^m)$ is a Fourier multiplier with symbol $\tM$ if the identity
\[
\hat{\M\phib}(k)
= \tM(k)\hat{\phib}(k)
\]
holds for all $\phib \in H_{\per}^r(\C^n)$ and $k \in \Z$.
In this case, the operator norm of $\M$ is
\begin{equation}\label{eqn: fm op norm}
\norm{\M}_{H_{\per}^r(\C^n) \to H_{\per}^s(\C^m)}
= \sup_{k \in \Z} (1+k^2)^{(s-r)/2}|\tM(k)|_2.
\end{equation}
Conversely, if $\tM \colon \R \to \C^{m \times n}$ is such that the supremum in \eqref{eqn: fm op norm} is finite, then defining
\[
(\M\phib)(x)
:= \sum_{k=-\infty}^{\infty} e^{ikx}\tM(k)\hat{\phib}(k)
\]
gives a Fourier multiplier $\M \in \b(H_{\per}^r(\C^n),H_{\per}^s(\C^m))$ with symbol $\tM$.
This and \eqref{eqn: fm op norm} are proved in \cite[Lem.\@ D.2.1]{faver-dissertation}.

The adjoint of $\M$ is the bounded linear operator $\M^* \colon H_{\per}^s(\C^m) \to H_{\per}^r(\C^n)$ satisfying
\[
\ip{\M\phib}{\etab}_{H_{\per}^s(\C^m)}
= \ip{\phib}{\M^*\etab}_{H_{\per}^r(\C^n)}
\]
for all $\phib \in H_{\per}^r(\C^n)$ and $\etab \in H_{\per}^s(\C^m)$.
We can calculate $\M^*$ explicitly via the formula
\begin{equation}\label{eqn: fm adj}
\hat{\M^*\etab}(k)
:= (1+k^2)^{s-r}\tM(k)^*\hat{\etab}(k),
\end{equation}
where $\tM(k)^*$ is the conjugate transpose of $\tM(k)$.

%%-----------------------------------------------------------%%
%%-----------------------------------------------------------%%
%%-----------------------------------------------------------%%
%%-----------------------------------------------------------%%
\subsection{Differentiating the shift operator}\label{app: shift operator calculus}
We prove that the map
\[
\R \to \b(H_{\per}^{r+2}(\C^n),H_{\per}^r(\C^n))
\colon \omega \mapsto S^{\omega}
\]
is differentiable and that its derivative is Lipschitz continuous on $\R$.
This is proved more generally in \cite[Thm.\@ D.3.1]{faver-dissertation} for a ``scaled'' Fourier multiplier, but we include the calculation here for completeness and because all Fourier multipliers that we consider ultimately boil down to shifts.
The derivative at $\omega \in \R$ is the operator $(S^{\omega})'$ given by
\begin{equation}\label{eqn: shift derivative}
\hat{(S^{\omega})'\phib}(k)
:= k(ie^{i\omega{k}})\hat{\phib}(k).
\end{equation}
For $\phib \in H_{\per}^{r+2}(\C^n)$, $\omega \in \R$, and $h \ne 0$, we compute
\[
\longnorm{\left(\frac{S^{\omega+h}-S^{\omega}-h(S^{\omega})'}{h}\right)\phib}_{H_{\per}^r(\C^n)}^2
= \sum_{k=-\infty}^{\infty} (1+k^2)^{-2}\left|\frac{e^{ihk}-1-ihk}{h}\right|^2(1+k^2)^{r+2}|\hat{\phib}(k)|_2^2.
\]
Two applications of the fundamental theorem of calculus yield
\begin{equation}\label{eqn: two FTC}
e^{ihk}-1-ihk
= (ihk)^2\int_0^1\int_0^1 te^{ihkts} \ds \dt,
\end{equation}
from which we bound
\begin{equation}\label{eqn: k4}
(1+k^2)^{-2}\left|\frac{e^{ihk}-1-ihk}{h}\right|^2
\le \frac{(1+k^2)^{-2}h^4k^4}{h^2}\int_0^1 t \dt
= h^2\left(\frac{k^4}{2(1+k^2)^2}\right).
\end{equation}
It follows that 
\[
\longnorm{\left(\frac{S^{\omega+h}-S^{\omega}-h(S^{\omega})'}{h}\right)\phib}_{H_{\per}^r(\C^n)}^2
\le Ch^2\norm{\phib}_{H_{\per}^{r+2}(\C^n)}^2,
\]
from which we have differentiability.
The mismatch in regularity between the domain and codomain ($H_{\per}^{r+2}(\C^n)$ vs.\@ $H_{\per}^r(\C^n)$) arises because of the factor of $k^2$ in \eqref{eqn: two FTC}; squaring that $k^2$ in \eqref{eqn: k4} requires us to introduce the factor of $(1+k^2)^{-2}$ to compensate.
This agrees with the regularity requirements in \cite[Thm.\@ D.3.1]{faver-dissertation}.

Now we check Lipschitz continuity for the derivative and calculate
\[
\norm{\big((S^{\omega})'-(S^{\grave{\omega}})'\big)\phib}_{H_{\per}^r(\C^n)}^2
= \sum_{k=-\infty}^{\infty} (1+k^2)^{-2}\big|k(ie^{i\omega{k}})-k(ie^{i\grave{\omega}k})\big|^2(1+k^2)^{r+2}|\hat{\phib}(k)|_2^2.
\]
Since
\begin{equation}\label{eqn: only one k}
e^{i\omega{k}}-e^{i\grave{\omega}k}
= ik(\omega-\grave{\omega})\int_0^1 e^{i\grave{\omega}k+ik(\omega-\grave{\omega})t} \dt,
\end{equation}
we bound
\[
(1+k^2)^{-2}\big|k(ie^{i\omega{k}})-k(ie^{i\grave{\omega}k})\big|^2
\le |\omega-\grave{\omega}|^2\left(\frac{k^2}{(1+k^2)^2}\right),
\]
and this yields
\[
\norm{\big((S^{\omega})'-(S^{\grave{\omega}})'\big)\phib}_{H_{\per}^r(\C^n)}^2
\le C|\omega-\grave{\omega}|^2\norm{\phib}_{H_{\per}^{r+2}(\C^n)}^2.
\]
This is the Lipschitz continuity for the derivative.
Here we did not strictly need the domain to be $H_{\per}^{r+2}(\C^n)$ and could have viewed $S^{\omega}$ as an operator from $H_{\per}^{r+1}(\C^n)$ to $H_{\per}^r(\C^n)$, as we only have one power of $k$ emerging from \eqref{eqn: only one k}.
This too agrees with the regularity requirements in \cite[Thm.\@ D.3.1]{faver-dissertation}.

%%-----------------------------------------------------------%%
%%-----------------------------------------------------------%%
%%-----------------------------------------------------------%%
%%-----------------------------------------------------------%%
\subsection{Composition operators in periodic Sobolev spaces}\label{app: composition operator calculus}
Let $\V \in \Cal^7(\R)$ with $\V'(0) = 0$.
We briefly sketch the argument that the composition operator
\[
\Ncal
\colon H_{\per}^2(\R) \to H_{\per}^2(\R)
\colon \phi \mapsto \V'\circ\phi
\]
is well-defined and twice-differentiable, and its second derivative is (locally) Lipschitz continuous.
First, since $\V'(0) = 0$, we have
\[
\V'(r) = r\int_0^1 \V''(tr)\dt
\quadword{and therefore}
(\Ncal(\phi))(x) = \phi(x)\int_0^1 \V''(t\phi(x)) \dt.
\]
Next, differentiating under the integral, we can express $\partial_x^2[\Ncal(\phi)]$ as a sum of products of derivatives of $\phi$ up to second order and, by the periodic Sobolev embedding \cite[Thm.\@ 7.9]{hunter-nachtergaele}, continuous and periodic functions (involving integrals of the form $\medint_0^1 \V^{(k)}(t\phi) \dt$ for $k=2$, $3$, $4$).
It follows from \cite[Cor.\@ 8.8]{kress} that $\Ncal(\phi) \in H_{\per}^2(\R)$. 
Last, differentiability of $\Ncal$ is straightforward to establish using the fundamental theorem of calculus; the proof is similar to the composition operator work in \cite[Lem.\@ A.2]{friesecke-pego2}.
We obtain $D_{\phi}\Ncal(\phi)\eta = (\V''\circ\phi)\eta$ and $D_{\phi\phi}^2\Ncal(\phi)[\eta,\grave{\eta}] = (\V'''\circ\phi)\eta\grave{\eta}$, and (local) Lipschitz continuity follows from the fundamental theorem again.
For that, using $(\V'''\circ\phi)\eta\grave{\eta} = \phi\eta\grave{\eta}\medint_0^1 \V^{(4)}(t\phi) \dt$ and estimating in the $H_{\per}^2(\R)$-norm, we need up to {\it{seven}} continuous derivatives on $\V$.

If we assume $\V \in \Cal^{\infty}(\R)$, then the composition operator $\Ncal$ is also infinitely differentiable on $H_{\per}^2(\R)$ and so (more importantly, for the purposes of Theorem \ref{thm: quant}) by the Sobolev embedding all of its derivatives are locally bounded and locally Lipschitz.
This can be proved using the composition operator techniques in \cite[App.\@ B]{faver-spring-dimer}, and we omit the details.

%%-----------------------------------------------------------%%
%%-----------------------------------------------------------%%
%%-----------------------------------------------------------%%
%%-----------------------------------------------------------%%
%%-----------------------------------------------------------%%
\section{Proofs for Linear Analysis}

%%-----------------------------------------------------------%%
%%-----------------------------------------------------------%%
%%-----------------------------------------------------------%%
%%-----------------------------------------------------------%%
\subsection{The proof of Corollary \ref{cor: eigenfunctions}}\label{app: ker coker Lc-omegac}
If $\L_c[\omega_c]\phib = 0$ and $\hat{\phib}(k) \ne 0$, then by the arguments preceding the statement of Theorem \ref{thm: eigenvalues}, the scalar $c^2(\omega_ck)^2$ must be an eigenvalue of $M^{-1}\tD(\omega_ck)$, and so $c^2(\omega_ck)^2 = \tlambda_{\pm}(\omega_ck)$.
By Theorem \ref{thm: eigenvalues}, this can happen only if $k = 0$ or $k\pm1$, and so
\[
\phib(x)
= e^{-ix}\hat{\phib}(-1) + \hat{\phib}(0) + e^{ix}\hat{\phib}(1).
\]
We study each of these Fourier modes separately.
Throughout, we are assuming that at least one of $w = m^{-1}$ or $\kappa$ is greater than $1$.

%%-----------------------------------------------------------%%
%%-----------------------------------------------------------%%
%%-----------------------------------------------------------%%
\subsubsection{The eigenfunction at $k=0$}
We solve $M^{-1}\tD(0)\vb = 0$ for $\vb = (v_1,v_2)$.
By definition of $\tD$ in \eqref{eqn: tD}, we have
\[
M^{-1}\tD(0)
= \begin{bmatrix*}
(1+\kappa) &-(1+\kappa) \\
-w(\kappa+1) &w(1+\kappa)
\end{bmatrix*},
\]
and so the vector $\vb$ must be a scalar multiple of 
\[
\nub_0
:= \frac{1}{\sqrt{2}}\begin{pmatrix*}
1 \\ 1
\end{pmatrix*}.
\]
Then $\hat{\phib}(0) = a_0\nub_0$ for some $a_0 \in \R$ (since $\phib$ is real-valued, $\hat{\phib}(0)$ must be real, too).

%%-----------------------------------------------------------%%
%%-----------------------------------------------------------%%
%%-----------------------------------------------------------%%
\subsubsection{The eigenfunction at $k=1$}
We solve $M^{-1}\tD(\omega_c)\vb = c^2\omega_c^2\vb$ for $\vb = (v_1,v_2)$.
Then, using again the definition of $\tD$ in \eqref{eqn: tD}, we need
\[
\begin{bmatrix*}
(1+\kappa) &-(e^{i\omega_c}+\kappa{e}^{-i\omega_c}) \\
-w(\kappa{e}^{i\omega_c}+e^{-i\omega_c}) &w(1+\kappa)
\end{bmatrix*}
\begin{pmatrix*}
v_1 \\
v_2
\end{pmatrix*}
= c^2\omega_c^2\begin{pmatrix*}
v_1 \\ v_2
\end{pmatrix*}.
\]
The first component here reads
\[
(1+\kappa-c^2\omega_c^2)v_1-(e^{i\omega_c}+\kappa{e}^{-i\omega_c})v_2
= 0.
\]
Assume for the moment that $1+\kappa-c^2\omega_c^2 \ne 0$; we prove this below in Appendix \ref{app: the thing is nonzero}.
\[
v_1
= \frac{e^{i\omega_c}+\kappa{e}^{-i\omega_c}}{1+\kappa-c^2\omega_c^2}v_1,
\]
so $\vb$ must be a scalar multiple of 
\begin{equation}\label{eqn: mubc}
\mub_c
:= \begin{pmatrix*}
e^{i\omega_c}+\kappa{e}^{-i\omega_c} \\
1+\kappa-c^2\omega_c^2
\end{pmatrix*}.
\end{equation}
Then $\hat{\phib}(1) = a_1\mub_c$ for some $a_1 \in \C$.
(Here we are not guaranteed $a_1 \in \R$.)

%%-----------------------------------------------------------%%
%%-----------------------------------------------------------%%
%%-----------------------------------------------------------%%
\subsubsection{The proof that $1+\kappa-c^2\omega_c^2 \ne 0$}\label{app: the thing is nonzero}
We use the identity $c^2\omega_c^2 = \tlambda_+(\omega_c)$ and the definition of $\tlambda_+$ in \eqref{eqn: tlambda} to compute
\begin{equation}\label{eqn: thing nonzero aux}
1+\kappa-c^2\omega_c^2
= 1+\kappa-\tlambda_+(\omega_c)
= -\left(\frac{(1+\kappa)(w-1)+\trho(\omega_c)}{2}\right),
\end{equation}
where $\trho$ is defined in \eqref{eqn: trho}.
In particular, $\trho(\omega_c) \ge 0$.
Thus for $w > 1$, we have $1+\kappa-c^2\omega_c^2 < 0$.
When $w = 1$, and consequently $\kappa > 1$, \eqref{eqn: thing nonzero aux} simplifies to
\[
1+\kappa-c^2\omega_c^2
- \frac{\trho(\omega_c)}{2}
- \sqrt{(1-\kappa)^2 + 4\kappa\cos^2(\omega_c)}
< 1-\kappa.
\]
The resulting estimate
\begin{equation}\label{eqn: thing nonzero est}
|1+\kappa-c^2\omega_c^2|
\ge \begin{cases}
(1+\kappa)(w-1)/2, \ w > 1 \\
\kappa-1, \ w = 1
\end{cases}
\end{equation}
will be useful in subsequent proofs, since it is uniform in $c$.

%%-----------------------------------------------------------%%
%%-----------------------------------------------------------%%
%%-----------------------------------------------------------%%
\subsubsection{A basis for the kernel of $\L_c[\omega_c]$}\label{sec: kernel calc}
We are assuming $\L_c[\omega_c]\phib = 0$ and so far know that 
\[
\phib(x)
= e^{-ix}\hat{\phib}(-1) + \hat{\phib}(0) + e^{ix}\hat{\phib}(1).
\]
Since we always assume that $\phib$ is real-valued, we have
\[
\hat{\phib}(-1)
= \overline{\hat{\phib}(1)},
\]
and then
\[
\phib(x)
= \hat{\phib}(0) + 2\re[e^{ix}\hat{\phib}(1)].
\]
Write $\hat{\phib}(1) = a_1\mub_c$ with $\mub_c$ defined in \eqref{eqn: mubc} and suppose $a_1 = a_r+ia_i$ for $a_r$, $a_i \in \R$.
Then
\[
\re\big[e^{ix}\hat{\phib}(1)\big]
= \re\big[(a_r+ia_i)\big(\re[e^{ix}\mub_c] + i\im[e^{ix}\mub_c]\big)\big]
= a_r\re[e^{ix}\mub_c] - a_i\im[e^{ix}\mub_c].
\]
Thus
\[
\phib(x)
= \hat{\phib}(0) + 2\re[e^{ix}\hat{\phib}(1)]
= a_0\nub_0+a_r\re[e^{ix}\mub_c] - a_i\im[e^{ix}\mub_c].
\]

It follows that the vectors
\begin{equation}\label{eqn: nub1 nub2}
\nub_0,
\qquad
\nub_1^c(x) := \frac{1}{\Nu_c}\re[e^{ix}\mub_c],
\quadword{and}
\nub_2^c(x) := \frac{1}{\Nu_c}\im[e^{ix}\mub_c],
\qquad
\Nu_c := |\mub_c|,
\end{equation}
span the kernel of $\L_c[\omega_c]$.
We check orthonormality as follows and obtain linear independence, so they are a basis for the kernel.
First, that$\ip{\nub_0}{\nub_1^c} = \ip{\nub_0}{\nub_2^c} = 0$ follows directly from the formulas above and the identity
\[
\int_{-\pi}^{\pi} e^{\pm{ix}} \dx
= 0.
\]
Next, for any $\phib \in L_{\per}^2(\R^2)$, we compute
\begin{equation}\label{eqn: ortho equiv aux1}
\ip{\phib}{\nub_1^c} 
= 2\re\big[\hat{\phib}(1)\cdot\hat{\nub_1^c}(1)\big],
\end{equation}
\[
(S^{-\pi/2}\nub_1^c)(x)
= \frac{1}{N_c}\re[-ie^{ix}\mub_c]
= \frac{1}{N_c}\im[e^{ix}\mub_c]
= \nub_2^c(x),
\]
and
\begin{equation}\label{eqn: ortho equiv aux2}
\ip{\phib}{\nub_2^c} 
= \ip{\phib}{S^{-\pi/2}\nub_1^c} 
= 2\re\big[\hat{\phib}(1)\cdot(-i\hat{\nub_1^c}(1))\big] 
= 2\im[\hat{\phib}(1)\cdot\hat{\nub_1^c}(1)\big].
\end{equation}
Combining \eqref{eqn: ortho equiv aux1} and \eqref{eqn: ortho equiv aux2}, incidentally, proves the orthogonality equivalence condition \eqref{eqn: nub ortho equiv}.
From \eqref{eqn: ortho equiv aux2}, we have
\[
\ip{\nub_1^c}{\nub_2^c}
= 2\im[\hat{\nub_1^c}(1)\cdot\hat{\nub_1^c}(1)]
= 2\im[|\nub_1^c|_2^2]
= 0.
\]
This concludes the orthonormality proof.
Last, the derivative identities \eqref{eqn: nub derivatives} follow directly from the formulas \eqref{eqn: nub1 nub2}.

%%-----------------------------------------------------------%%
%%-----------------------------------------------------------%%
%%-----------------------------------------------------------%%
%%-----------------------------------------------------------%%
\subsubsection{The kernel of $\L_c[\omega_c]^*$}\label{app: ker Lc-omegac-star}
As discussed in Appendix \ref{app: fm}, the adjoint operator $\L_c[\omega_c]^* \colon \L_{\per}^2(\R^2) \to H_{\per}^2(\R^2)$ satisfies
\[
\hat{\L_c[\omega_c]^*\etab}(k)
= (1+k^2)^{-2}(-c^2(\omega_ck)^2M+\tD(\omega_ck)^*)\hat{\etab}(k).
\]
Here $\tD(K)^*$ is the conjugate transpose of the matrix $\tD(K)$ defined in \eqref{eqn: tD}.
Happily, $\tD(K)$ is symmetry, so $\tD(K)^* = \tD(K)$, and therefore
\begin{equation}\label{eqn: ker of Lc-omegac-star}
\hat{\L_c[\omega_c]^*\etab}(k)
= (1+k^2)^{-2}(-c^2(\omega_ck)^2M+\tD(\omega_ck))\hat{\etab}(k)
= (1+k^2)^{-2}\hat{\L_c[\omega_c]\etab}(k).
\end{equation}
Thus if $\L_c[\omega_c]\etab = 0$, then $\hat{\L_c[\omega_c]\etab}(k) = 0$ for all $k$, and so $\L_c[\omega_c]\etab = 0$.
Consequently, the kernel of $\L_c[\omega_c]^*$ is contained in the span of $\nub_0$, $\nub_1^c$, and $\nub_2^c$, and the reverse containment is also obvious from \eqref{eqn: ker of Lc-omegac-star}.

%%-----------------------------------------------------------%%
%%-----------------------------------------------------------%%
%%-----------------------------------------------------------%%
%%-----------------------------------------------------------%%
%%-----------------------------------------------------------%%
\subsection{The proof of Corollary \ref{cor: transversality}}\label{app: transversality}
We compute the exact value of $\ip{\L_c'[\omega_c]\nub_1^c}{\nub_1^c}$, where, from the definitions of $M$ in \eqref{eqn: M} and $\tD$ in \eqref{eqn: tD}, the symbol of $\L_c'[\omega_c]$ is
\[
\tL_c'(\omega_ck)
= -2c^2\omega_ckM+\tD'(\omega_ck)
= \begin{bmatrix*}
-2c^2\omega_ck &-i(e^{i\omega_ck}-\kappa{e}^{-i\omega_ck}) \\
-i(\kappa{e}^{i\omega_ck}-e^{-i\omega_ck}) &-2c^2w^{-1}\omega_ck
\end{bmatrix*}.
\]
Our goal is to use the inequality
\begin{equation}\label{eqn: transversality app}
\inf_{|c| > c_{\star}} 2c^2\omega_c-\tlambda_+'(\omega_c)
> 0
\end{equation}
from Theorem \ref{thm: eigenvalues} and recognize $\ip{\L_c'[\omega_c]\nub_1^c}{\nub_1^c}$ as the product of $2c^2\omega_c-\tlambda_+'(\omega_c)$ and a quantity that is uniformly bounded in $c$ away from $0$.

By \eqref{eqn: ortho equiv aux1}, we have
\begin{equation}\label{eqn: app transv 2}
\ip{\L_c'[\omega_c]\nub_1^c}{\nub_1^c}
= 2\re\big[\hat{\L_c'[\omega_c]\nub_1^c}{1}\cdot\hat{\nub_1^c}(1)\big]
= 2\re\big[\tL_c(\omega_c)\hat{\nub_1^c}(1)\cdot\hat{\nub_1^c}(1)\big].
\end{equation}
The formula \eqref{eqn: nub1} for $\nub_1^c$ gives
\begin{equation}\label{eqn: app transv 3}
\tL_c'(\omega_c)\hat{\nub_1^c}(1)\cdot\hat{\nub_1^c}(1)
= \frac{1}{\Nu_c^2}\begin{bmatrix*}
-2c^2\omega_c &-i(e^{i\omega_c}-\kappa{e}^{-i\omega_c}) \\
-i(\kappa{e}^{i\omega_c}-e^{-i\omega_c}) &-2c^2w^{-1}\omega_c
\end{bmatrix*}
\begin{pmatrix*} 
e^{i\omega_c}+\kappa{e}^{-i\omega_c} \\
1+\kappa-c^2\omega_c^2
\end{pmatrix*}
\cdot \begin{pmatrix*} 
e^{i\omega_c}+\kappa{e}^{-i\omega_c} \\
1+\kappa-c^2\omega_c^2
\end{pmatrix*}.
\end{equation}
Some preparation and attention to detail will simplify what would otherwise be a burdensome calculation into a slightly less burdensome calculation.
Suppressing dependence on $c$, we put
\begin{equation}\label{eqn: z and v}
z_1 = -2c^2\omega_c,
\quad
z_2 = e^{i\omega_c}-\kappa{e}^{-i\omega_c},
\quad
v_1 = e^{i\omega_c}+\kappa{e}^{-i\omega_c}, 
\quad\text{and}\quad
v_2 = 1+\kappa-c^2\omega_c^2. 
\end{equation}
In particular,
\[
\overline{z_2} = e^{-i\omega_c}-\kappa{e}^{i\omega_c}
\quadword{and so}
-i(\kappa{e}^{i\omega_c}-e^{-i\omega_c})
= i(e^{-i\omega_c}-\kappa{e}^{i\omega_c})
= i(\overline{z_2}).
\]
Then \eqref{eqn: app transv 3} is equivalent to
\begin{equation}\label{eqn: transversality real}
\begin{aligned}
\Nu_c^2\tL_c'(\omega_c)\hat{\nub_1^c}(1)\cdot\hat{\nub_1^c}(1)
&= \begin{bmatrix*}
z_1 &-iz_2 \\
i(\overline{z_2}) &w^{-1}z_1
\end{bmatrix*}
\begin{pmatrix*}
v_1 \\
v_2
\end{pmatrix*}
\cdot
\begin{pmatrix*}
v_1 \\
v_2
\end{pmatrix*} \\
&= (z_1v_1-iz_2v_2)\overline{v_1}+(i(\overline{z_2})v_1+w^{-1}z_1v_2)\overline{v_2} \\
&= z_1|v_1|^2-iz_2\overline{v_1}v_2+i(\overline{z_2})v_1\overline{v_2} + w^{-1}z_1v_2^2 \\
&= z_1\big(|v_1|^2+w^{-1}v_2^2)+\overline{i(\overline{z_2})v_1\overline{v_2}}+i(\overline{z_2})v_1v_2 \\
&= z_1\big(|v_1|^2+w^{-1}v_2^2)+2\re\big[i(\overline{z_2})v_1v_2\big].
\end{aligned}
\end{equation}
This immediately shows that $\tL_c'(\omega_c)\hat{\nub_1^c}(1)\cdot\hat{\nub_1^c}(1)$ is real, and so, after (re)introducing what turns out to be a helpful factor of $w$, \eqref{eqn: app transv 2} reads
\begin{equation}\label{eqn: app transv 4}
\frac{w\Nu_c^2\ip{\L_c'[\omega_c]\nub_1^c}{\nub_1^c}}{2}
= w\re\big[\tL_c'(\omega_c)\hat{\nub_1^c}(1)\cdot\hat{\nub_1^c}(1)\big]
= -2c^2\omega_c\big(w|v_1|^2+v_2^2)+2w\re\big[i(\overline{z_2})v_1v_2\big].
\end{equation}

The first term on the right in \eqref{eqn: app transv 4} contains a factor of $2c^2\omega_c$, which appears in our favorite estimate \eqref{eqn: transversality app}.
Now we work on the second term in \eqref{eqn: app transv 4} to expose a factor of $\tlambda_+'(\omega_c)$, which also appears in that estimate.
We have
\begin{align*}
\overline{z_2}v_1
&= (e^{-i\omega_c}-\kappa{e}^{i\omega_c})(e^{i\omega_c}+\kappa{e}^{-i\omega_c}) \\
&= 1+\kappa{e}^{-2i\omega_c}-\kappa{e}^{2i\omega_c}-\kappa^2 \\
&= 1-\kappa^2-2i\kappa\sin(2\omega_c),
\end{align*}
and so
\[
i(\overline{z_2})v_1v_2
= i(1-\kappa^2-2i\kappa\sin(2\omega_c))v_2
= i(1-\kappa^2)v_2+2\kappa\sin(2\omega_c)v_2.
\]
Thus
\[
2\re\big[i(\overline{z_2})v_1v_2\big]
= 2\re\big[i(1-\kappa^2)v_2+2\kappa\sin(2\omega_c)v_2\big]
= 4\kappa\sin(2\omega_c)v_2
\]
since $v_2 \in \R$.
By definition of $\tlambda_+$ in \eqref{eqn: tlambda}, we compute
\[
\tlambda_+'(\omega_c)
= -\frac{4\kappa{w}\sin(2\omega_c)}{\trho(\omega_c)},
\]
where $\trho$ is defined in \eqref{eqn: trho}.
Thus
\[
4\kappa{w}\sin(2\omega_c)v_2
= -\trho(\omega_c)v_2\tlambda_+'(\omega_c),
\]
and so \eqref{eqn: app transv 4} becomes
\begin{equation}\label{eqn: app transv 5}
\frac{\Nu_c^2w\ip{\L_c'[\omega_c]\nub_1^c}{\nub_1^c}}{2}
= -2c^2\omega_c\big(w|v_1|^2+v_2^2)-\trho(\omega_c)v_2\tlambda_+'(\omega_c).
\end{equation}

The first term in \eqref{eqn: app transv 5} now needs our attention.
We compute
\begin{equation}\label{eqn: transv v1}
|v_1|^2
= |e^{i\omega_c}+\kappa{e}^{-i\omega_c}|^2
= (1-\kappa)^2+4\kappa\cos^2(\omega_c).
\end{equation}
Next, in \eqref{eqn: thing nonzero aux}, we calculated
\begin{equation}\label{eqn: transv v2}
v_2
= 1+\kappa-c^2\omega_c^2
= \frac{(1+\kappa)(1-w)-\trho(\omega_c)}{2}.
\end{equation}
We use these expansions for $|v_1|^2$ and $v_2$ as well as the expansion
\[
\trho(\omega_c)^2
= (1+w)^2(1-\kappa)^2+4\kappa(1-w)^2+16\kappa{w}\cos^2(\omega_c)
\]
from the definition of $\trho$ in \eqref{eqn: trho} to compute, laboriously,
\begin{multline}\label{eqn: transv extra factor}
w|v_1|^2+v_2^2
= w(1-\kappa)^2+4\kappa{w}\cos^2(\omega_c)
+ \left(\frac{(1+\kappa)(1-w)-\trho(\omega_c)}{2}\right)^2 \\
= -\frac{\trho(\omega_c)\big[(1+\kappa)(1-w)-\trho(\omega_c)\big]}{2}
= -\trho(\omega_c)v_2.
\end{multline}

Back to \eqref{eqn: app transv 5}, we now see that
\[
\frac{w\Nu_c^2\ip{\L_c'[\omega_c]\nub_1^c}{\nub_1^c}}{2}
= -2c^2\omega_c\big(-\trho(\omega_c)v_2\big)-\trho(\omega_c)v_2\tlambda_+'(\omega_c)
= \trho(\omega_c)v_2\big(2c^2\omega_c-\lambda_+'(\omega_c)\big).
\]
That is,
\[
\ip{\L_c'[\omega_c]\nub_1^c}{\nub_1^c}
= \frac{2\trho(\omega_c)v_2}{w\Nu_c^2}\big(2c^2\omega_c-\lambda_+'(\omega_c)\big)
\]

All that remains is to check that the product $\rho(\omega_c)v_2/\Nu_c$ is uniformly bounded in $c$ away from $0$.
First, the definition of $\trho$ in \eqref{eqn: trho} implies
\[
|\trho(\omega_c)|
\ge \sqrt{(1+w)^2(1-\kappa)^2 + 4\kappa(1-w)^2}.
\]
Since at least one of $\kappa$ or $w$ is greater than $1$, this quantity is positive.
Next, the estimate \eqref{eqn: thing nonzero est} gives a positive lower bound on $v_2$ that is independent of $c$.
Finally, the definition \eqref{eqn: Nu-c} of $\Nu_c$ gives
\[
|\Nu_c|
\ge \begin{cases}
\sqrt{2}(\kappa-1), \ \kappa > 1 \\
\sqrt{2}(w-1), \ w > 1.
\end{cases}
\]
We conclude that $\trho(\omega_c)v_2/\Nu_c$ is uniformly bounded in $c$ away from $0$.

%%-----------------------------------------------------------%%
%%-----------------------------------------------------------%%
%%-----------------------------------------------------------%%
%%-----------------------------------------------------------%%
\subsection{The proof of Corollary \ref{cor: coercive}}\label{app: coercive}
Assume that $\L_c[\omega_c]\psib = \etab$, where $\psib = (\psi_1,\psi_2) \in H_{\per}^{r+2}(\R^2)$ and $\etab = (\eta_1,\eta_2) \in H_{\per}^r(\R^2)$ with
\[
\ip{\psib}{\nub_0} = \ip{\psib}{\nub_1^c} = \ip{\psib}{\nub_2^c} = 0
\quadword{and}
\ip{\etab}{\nub_0} = \ip{\etab}{\nub_1^c} = \ip{\etab}{\nub_2^c} = 0.
\]
We will solve for $\psib$ in terms of $\etab$ and uniformly estimate $\norm{\psib}_{H_{\per}^{r+2}}$ in terms of $c$ and $\norm{\etab}_{H_{\per}^r}$.
Since $\L_c[\omega_c]\psib = \etab$, we have $\tL_c(\omega_ck)\hat{\psib}(k) = \hat{\etab}(k)$ for each $k \in \Z$, and so we really need to solve
\begin{equation}\label{eqn: coercive Fourier side}
\big(-c^2\omega_c^2k^2M+\tD(\omega_ck)\big)\hat{\phib}(k)
= \hat{\etab}(k)
\end{equation}
for each $k \in \Z$, where $\tD$ is defined in \eqref{eqn: tD}.
We treat the cases $k = 0$, $k=\pm1$, and $|k| \ge 2$ separately.
This is the same strategy as the proofs of the coercive estimates for the mass dimer small mass limit \cite[Lem.\@ B.1]{hoffman-wright}, the mass dimer equal mass limit \cite[Lem.\@ C.2]{faver-hupkes-equal-mass}, and the MiM small mass limit \cite[Prop.\@ 5]{faver-mim-nanopteron}.

Before proceeding, we point out some consequences of the orthogonality conditions above for $k=0$ and $k=1$ that make the entire argument possible.
(This is essentially an exercise in solving $2\times2$ linear systems, but we need to be careful with our material parameters $w$ and $\kappa$ and our wave speed $c$.)
Suppose that $\phib = (\phi_1,\phi_2) \in L_{\per}^2(\R^2)$ with
\[
\ip{\phib}{\nub_0} 
= \ip{\phib}{\nub_1^c} 
= \ip{\phib}{\nub_2^c} 
= 0.
\]
We use these orthogonality conditions to derive formulas for $\hat{\phi}_2(k)$ in terms of $\hat{\phi}_1(k)$ for $k=0$ and $k=1$.

First, the condition $\ip{\phib}{\nub_0} = 0$ immediately implies 
\begin{equation}\label{eqn: phi2 in terms of phi1 0}
\hat{\phi}_2(0) 
= -\hat{\phi}_1(0).
\end{equation}
Next, the orthogonality condition \eqref{eqn: nub ortho equiv} implies
\[
\hat{\phib}(1)\cdot\hat{\nub_1^c}(1)
= 0,
\]
and from the definition of $\nub_1^c$ in \eqref{eqn: nub1}, this reads
\[
\hat{\phi}_1(1)(e^{-i\omega_c}+\kappa{e}^{i\omega_c}) + (1+\kappa-c^2\omega_c^2)\hat{\phi}_2(1)
= 0.
\]
Since $1+\kappa-c^2\omega_c^2 \ne 0$ by the work in Appendix \ref{app: the thing is nonzero}, we have
\begin{equation}\label{eqn: phi2 in terms of phi1}
\hat{\phi}_2(1)
= -\frac{e^{-i\omega_c}+\kappa{e}^{i\omega_c}}{1+\kappa-c^2\omega_c^2}\hat{\phi}_1(1).
\end{equation}

%%-----------------------------------------------------------%%
%%-----------------------------------------------------------%%
%%-----------------------------------------------------------%%
\subsubsection{The case $k=0$}
Here the first component of \eqref{eqn: coercive Fourier side} reads
\[
\hat{\psi}_1(0) - \hat{\psi}_2(0)
= \frac{\hat{\eta}_1(0)}{1+\kappa},
\]
and from \eqref{eqn: phi2 in terms of phi1 0} this is
\[
2\hat{\psi}_1(0)
= \hat{\eta}_1(0).
\]
Thus
\[
\hat{\psi}_1(0)
= \frac{\hat{\eta}_1(0)}{2(1+\kappa)}.
\]
It follows from this equality and \eqref{eqn: phi2 in terms of phi1 0} that
\[
|\hat{\psib}(0)|_2
\le \frac{|\hat{\etab}(0)|_2}{2(1+\kappa)}.
\]

%%-----------------------------------------------------------%%
%%-----------------------------------------------------------%%
%%-----------------------------------------------------------%%
\subsubsection{The case $k=\pm1$}
We only need to estimate $|\hat{\psib}(1)|$, as $|\hat{\psib}(1)| = |\hat{\psib}(-1)|$ since $\psib$ is $\R^2$-valued.
At $k=1$ the first component of \eqref{eqn: coercive Fourier side} reads
\[
(1+\kappa-c^2\omega_c^2)\hat{\psi}_1(1)-(e^{i\omega_c}+\kappa{e}^{-i\omega_c})\hat{\psi}_2(1)
= \hat{\eta}_1(1).
\]
We use the identity \eqref{eqn: phi2 in terms of phi1} to remove $\hat{\psi}_2(1)$ from this equation and write it in terms of $\hat{\psi}_1(1)$ alone.
We find
\[
\left(1+\kappa-c^2\omega_c^2+\frac{|e^{i\omega_c}+\kappa{e}^{-i\omega_c}|^2}{1+\kappa-c^2\omega_c^2}\right)\hat{\psi}_1(1)
= \hat{\eta}_1(1).
\]
We use \eqref{eqn: transv v1} and rearrange this into
\begin{equation}\label{eqn: the big coercive one}
\big((1+\kappa-c^2\omega_c^2)^2+(1-\kappa)^2+4\kappa\cos^2(\omega_c)\big)\hat{\psi}_1(1)
= (1+\kappa-c^2\omega_c^2)\hat{\eta}_1(1).
\end{equation}

Since
\[
(1+\kappa-c^2\omega_c^2)^2+(1-\kappa)^2+4\kappa\cos^2(\omega_c)
\ge (1+\kappa-c^2\omega_c^2)^2,
\]
the uniform lower bound on $1+\kappa-c^2\omega_c^2$ from \eqref{eqn: thing nonzero est} and the upper bound
\[
|1+\kappa-c^2\omega_c^2|
\le 1+\kappa+(1+\kappa)(1+w)
\]
from \eqref{eqn: omegac-ineq}, we can derive from \eqref{eqn: the big coercive one} the estimate
\[
|\hat{\psi}_1(1)|_2f
\le C|\hat{\eta}_1(1)|_2,
\]
where $C$ depends on $\kappa$ and $w$ but not on $c$, $\psib$, or $\etab$.
The identity \eqref{eqn: phi2 in terms of phi1} and the uniform lower bound on $1+\kappa-c^2\omega_c^2$ imply 
\[
|\hat{\psi}_2(1)|_2
\le C|\hat{\eta}_1(1)|_2
\]
 as well.
 A final invocation of \eqref{eqn: phi2 in terms of phi1} allows us to estimate $|\hat{\eta}_1(1)|_2 \le C|\hat{\etab}(1)|_2$.

%%-----------------------------------------------------------%%
%%-----------------------------------------------------------%%
%%-----------------------------------------------------------%%
\subsubsection{The case $|k| \ge 2$}
Since $k \ne 0$, we may rewrite \eqref{eqn: coercive Fourier side} as
\begin{equation}\label{eqn: Neumann}
\left(I_2 - \frac{1}{c^2\omega_c^2k^2}M^{-1}\tD(\omega_ck)\right)\hat{\psib}(k)
= -\frac{1}{c^2\omega_c^2k^2}M^{-1}\hat{\etab}(k).
\end{equation}
Here $I_2$ is the $2\times2$ identity matrix.
We will use the Neumann series to solve \eqref{eqn: Neumann} for $\hat{\psib}(k)$ in terms of $\hat{\etab}(k)$ with uniform estimates in $c$.

The following estimates use our conventions for matrix norms from Appendix \ref{app: vectors and matrices}.
First, the definition of $\tD$ in \eqref{eqn: tD} yields the estimate
\[
|M^{-1}\tD(\omega_ck)|_{\infty}
\le (1+\kappa)w.
\]
Since $|k| \ge 2$, we have
\[
\frac{1}{c^2\omega_c^2k^2}|M^{-1}\tD(\omega_ck)|_2
\le \frac{2}{c^2\omega_c^2k^2}|M^{-1}\tD(\omega_ck)|_{\infty}
\le \frac{2(1+\kappa)w}{4c^2\omega_c^2}.
\]
Next, the inequality \eqref{eqn: omegac-ineq} on $\omega_c$ and the definition of $\tlambda_+$ in \eqref{eqn: tlambda} imply
\begin{equation}\label{eqn: 1/csq omega sq}
\frac{1}{c^2\omega_c^2}
\le \frac{1}{\tlambda_+(\pi/2)}
\le \frac{2}{(1+\kappa)(1+w)}.
\end{equation}
Thus
\[
\frac{1}{c^2\omega_c^2k^2}|M^{-1}\tD(\omega_ck)|_2
\le \frac{4(1+\kappa)w}{4(1+\kappa)(1+w)}
= \frac{w}{1+w}
< 1.
\]

We may therefore use the Neumann series to solve \eqref{eqn: Neumann} for $\hat{\psib}(k)$ in terms of $\hat{\etab}(k)$, and we obtain
\[
|\hat{\psib}(k)|_2
\le \frac{1}{1-w/(1+w)}\left(\frac{|M^{-1}|_2}{c^2\omega_c^2k^2}\right)|\hat{\etab}(k)|_2
\le \frac{1}{1-w/(1+w)}\left(\frac{4w}{(1+\kappa)(1+w)}\right)\frac{|\hat{\etab}(k)|_2}{k^2}.
\]
The second inequality follows from \eqref{eqn: 1/csq omega sq} and the estimate $|M^{-1}|_2 \le 2|M^{-1}|_{\infty} = 2w$ from \eqref{eqn: matrix norm ineq}.
This, along with the uniform estimates in $c$ on $|\hat{\psib}(k)|_2$ for $k=0$, $1$ from the previous sections, gives the coercive estimate $\norm{\psib}_{H_{\per}^{r+2}} \le C\norm{\etab}_{H_{\per}^r}$.
The constant $C$ depends on $\kappa$ and $w$ but is independent of $r$.

%%-----------------------------------------------------------%%
%%-----------------------------------------------------------%%
%%-----------------------------------------------------------%%
%%-----------------------------------------------------------%%
\subsection{The proof of Lemma \ref{lem: Phib regularity}}\label{app: proof of lem Phib regularity}
Continuity and differentiability of $\Phib_c$ in $\phib$ follow from the composition operator calculus in Appendix \ref{app: composition operator calculus} and in $\omega$ from the shift operator calculus in Appendix \ref{app: shift operator calculus}.
A second appeal to these appendices gives the same results for $D_{\phib}\Phib_c$.
In each case, we are only taking one derivative with respect to $\omega$, and that is all that Appendix \ref{app: shift operator calculus} guarantees when we consider $S^{\omega}$ as a map from $H_{\per}^2(\R)$ to $H_{\per}^0 = L_{\per}^2$.

%%-----------------------------------------------------------%%
%%-----------------------------------------------------------%%
%%-----------------------------------------------------------%%
%%-----------------------------------------------------------%%
\subsection{The proof of Lemma \ref{lem: other transverse}}\label{app: other transverse}

%%-----------------------------------------------------------%%
%%-----------------------------------------------------------%%
%%-----------------------------------------------------------%%
\subsubsection{A proof using the gradient formulation}\label{app: other transverse via gradient}
We claim that 
\begin{equation}\label{eqn: Lc ortho cond}
\ip{\L_c[\omega]\phib}{\phib'} 
= 0
\end{equation}
for all $\phib \in H_{\per}^2(\R^2)$ and $\omega \in \R$ and prove this claim below.
Assuming this to be true, we differentiate \eqref{eqn: Lc ortho cond} with respect to $\omega$ and obtain
\[
\ip{\L_c'[\omega]\phib}{\phib'} 
= 0
\]
for all $\phib \in H_{\per}^2(\R^2)$ and $\omega \in \R$.
In particular,
\[
\ip{\L_c'[\omega_c]\nub_1^c}{\nub_2^c}
= -\ip{\L_c'[\omega_c]\nub_1^c}{\partial_x\nub_1^c}
= 0.
\]

Now we prove the claim \eqref{eqn: Lc ortho cond}.
The proofs of the derivative orthogonality condition $\ip{\Phib_c(\phib,\omega)}{\phib'} = 0$ in both part \ref{part: deriv ortho cond} of Corollary \ref{cor: shift} and in Lemma \ref{lem: deriv ortho} did not rely on the precise structure of the spring potentials $\V_1$ and $\V_2$, provided that they were continuously differentiable.
So, assume here that both are linear with $\V_1(r) = \V_1'(0)r$ and $\V_2(r) = \V_2'(0)r$.
Then $\Phib_c(\phib,\omega) = D_{\phib}\Phib_c(0,\omega)\phib = \L_c[\omega]\phib$, and \eqref{eqn: Lc ortho cond} follows from the original derivative orthogonality condition.

%%-----------------------------------------------------------%%
%%-----------------------------------------------------------%%
%%-----------------------------------------------------------%%
\subsubsection{A proof via direct calculation}
The same reasoning that led to \eqref{eqn: app transv 2} implies
\[
\ip{\L_c'[\omega_c]\nub_1^c}{\nub_2^c}
= 2\re\big[\tL_c'(\omega_c)\hat{\nub_1^c}(1)\cdot\hat{\nub_2^c}(1)\big].
\]
Now, from the definitions of $\nub_1^c$ in \eqref{eqn: nub1} and $\nub_2^c$ in \eqref{eqn: nub2}, we have
\[
\hat{\nub_2^c}(1)
= -i\hat{\nub_1^c}(1),
\]
and so
\[
\tL_c'(\omega_c)\hat{\nub_1^c}(1)\cdot\hat{\nub_2^c}(1)
= \tL_c'(\omega_c)\hat{\nub_1^c}(1)\cdot\big(-i\hat{\nub_1^c}(1)\big)
= i\big(\tL_c'(\omega_c)\hat{\nub_1^c}(1)\cdot\hat{\nub_1^c}(1)\big),
\]
thus
\[
\ip{\L_c'[\omega_c]\nub_1^c}{\nub_2^c}
= 2\re\big[i\big(\tL_c'(\omega_c)\hat{\nub_1^c}(1)\cdot\hat{\nub_1^c}(1)\big)\big].
\]
But in \eqref{eqn: transversality real}, we calculated that $\tL_c'(\omega_c)\hat{\nub_1^c}(1)\cdot\hat{\nub_1^c}(1)$ is real, and so $\ip{\L_c'[\omega_c]\nub_1^c}{\nub_2^c} = 0$.

%%-----------------------------------------------------------%%
%%-----------------------------------------------------------%%
%%-----------------------------------------------------------%%
%%-----------------------------------------------------------%%
\subsection{The proof of Lemma \ref{lem: sd nub sym}}\label{app: proof of lemma sd nub sym}
We have $\Scal_{\Kb}\nub_1^c = \pm\nub_1^c$ if and only if $\hat{\Scal_{\Kb}\nub_1^c}(1) = \pm\hat{\nub_1^c}(1)$ and $\hat{\Scal_{\Kb}\nub_1^c}(-1) = \pm\hat{\nub_1^c}(-1)$.
Since $\Scal_{\Kb}\nub_1^c$ and $\nub_1^c$ are real-valued, the second equality automatically holds if the first does.  
Thus $\Scal_{\Kb}\nub_1^c = \pm\nub_1^c$ if and only if $\hat{\Scal_{\Kb}\nub_1^c}(1) = \pm\hat{\nub_1^c}(1)$.
We compute
\[
\hat{\Scal_{\Kb}\nub_1^c}(1)
= -J\hat{R\nub_1^c}(1)
= -J\hat{\nub_1^c}(-1).
\]
From the definition of $\nub_1^c$ in \eqref{eqn: nub1}, where it is not at this time at all apparent that taking $w=1$ matters, we have
\[
-\Nu_cJ\hat{\nub_1^c}(-1)
= -J\begin{pmatrix*}
e^{-i\omega_c}+\kappa{e}^{i\omega_c} \\
1+\kappa-c^2\omega_c^2
\end{pmatrix*}
= -\begin{pmatrix*}
1+\kappa-c^2\omega_c^2 \\
e^{-i\omega_c}+\kappa{e}^{i\omega_c}
\end{pmatrix*}
\]
Thus $\hat{\Scal_{\Kb}\nub_1^c}(1) = \pm\hat{\nub_1^c}(1)$ if and only if
\[
-\begin{pmatrix*}
1+\kappa-c^2\omega_c^2 \\
e^{-i\omega_c}+\kappa{e}^{i\omega_c}
\end{pmatrix*}
= \pm\begin{pmatrix*}
e^{i\omega_c}+\kappa{e}^{-i\omega_c} \\
1+\kappa-c^2\omega_c^2
\end{pmatrix*},
\]
from which it follows that $\hat{\Scal_{\Kb}\nub_1^c}(1) = \pm\hat{\nub_1^c}(1)$ is equivalent to 
\[
e^{i\omega_c}+\kappa{e}^{-i\omega_c}
= \pm(1+\kappa-c^2\omega_c^2).
\]

Taking real and imaginary parts, we have $\hat{\Scal_{\Kb}\nub_1^c}(1) = \pm\hat{\nub_1^c}(1)$ if and only if
\begin{subnumcases}{}
(1+\kappa)\cos(\omega_c) = \pm(1+\kappa-c^2\omega_c^2) \label{eqn: spring dimer re aux} \\
(1-\kappa)\sin(\omega_c) = 0. \label{eqn: spring dimer im aux}
\end{subnumcases}
Since we are working with a spring dimer and $\kappa \ne 1$, \eqref{eqn: spring dimer im aux} is equivalent to $\omega_c = j\pi$ for some $j \in \Z$.

We use \eqref{eqn: thing nonzero aux} with $w = 1$ and $\omega_c = j\pi$, $j \in \Z$, and the definition of $\trho$ in \eqref{eqn: trho} to compute
\[
1+\kappa-c^2\omega_c^2
= -\frac{\trho(\omega_c)}{2}
= 1+\kappa,
\]
and so \eqref{eqn: spring dimer re aux} is equivalent to
\[
(1+\kappa)(-1)^j 
= \pm(1+\kappa).
\]
Thus $\hat{\Scal_{\Kb}\nub_1^c}(1) = \pm\hat{\nub_1^c}(1)$ if and only if $(-1)^j = \pm1$, so $\hat{\Scal_{\Kb}\nub_1^c}(1) = \hat{\nub_1^c}(1)$ if and only if $j$ is even, while $\hat{\Scal_{\Kb}\nub_1^c}(1) = -\hat{\nub_1^c}(1)$ if and only if $j$ is odd.

%%-----------------------------------------------------------%%
%%-----------------------------------------------------------%%
%%-----------------------------------------------------------%%
%%-----------------------------------------------------------%%
%%-----------------------------------------------------------%%
\bibliographystyle{siam}
\bibliography{position_bib}{}

\end{document}